\crefname{conjecture}{Conjecture}{Conjectures}
\Crefname{conjecture}{Conjecture}{Conjectures}
\theoremstyle{plain}
\newtheorem{theorem}{Theorem}
\newtheorem{lemma}[theorem]{Lemma}
\newtheorem{conjecture}[theorem]{Conjecture}
\newtheorem*{theorem*}{Theorem}
\newtheorem{proposition}[theorem]{Proposition}
\newtheorem{corollary}[theorem]{Corollary}
\crefname{mainTheorem}{Theorem}{Theorems}
\theoremstyle{definition}
\theoremstyle{remark}
\newtheorem{example}[theorem]{Example}
\newtheorem{remark}[theorem]{Remark}
\newtheorem*{remark*}{Remark}
\numberwithin{theorem}{section}
\def\u{\ensuremath{\mathfrak u}}
\def\C{\ensuremath{\mathbb C}}
\def\P{\ensuremath{\mathbb P}}
\newcommand{\cO}{\mathcal{O}}
\newcommand{\one}{1\hskip-3.5pt1}
\newcommand{\csm}{{c_{\text{SM}}}}
\newcommand{\csmT}{{c_{\text{SM}}^T}}
\newcommand{\csmv}{{c^\vee_{\text{SM}}}}
\DeclareMathOperator{\Fl}{Fl}
\newcommand{\cMa}{{c_{\text{Ma}}}}
\newcommand{\cMaT}{{c_{\text{Ma}}^{T}}}
\newcommand{\cMaTv}{{c^{T,\vee}_{\text{Ma}}}}
\newcommand{\al}{\alpha}
\newcommand{\bP}{{\mathbb{P}}}
\newcommand{\ssm}{{s_{\text{SM}}}}
\newcommand{\cma}{{c_{\text{Ma}}}}
\newcommand{\sma}{{s_{\text{Ma}}}}
\DeclareMathOperator{\LG}{{LG}}
\DeclareMathOperator{\OG}{{OG}}
\DeclareMathOperator{\Gr}{Gr}
\DeclareMathOperator{\Eu}{Eu}
\newcommand{\rp}{\ensuremath{R_{\geq\alpha_P}}}
\begin{document}

\title[Mather classes and conormal spaces]{Mather classes and conormal spaces of Schubert varieties in cominuscule spaces}
\author{Leonardo C.  Mihalcea}
\address{Department of Mathematics, Virginia Tech, 460 McBryde Hall, 225 Stanger St., Blacksburg VA 24061}
\email{lmihalce@math.vt.edu} 
\author{Rahul Singh}
\address{Department of Mathematics, Virginia Tech, 460 McBryde Hall, 225 Stanger St., Blacksburg VA 24061}
\email{rahul.sharpeye@gmail.com}

\subjclass[2020]{Primary 14C17, 14M15; Secondary 32S60}

\thanks{L.~C.~Mihalcea was supported in part by the Simons Collaboration Grant 581675.}

\date{June 8, 2020} 
\maketitle

\begin{abstract}
Let $G/P$ be a complex cominuscule flag manifold.
We prove a type independent formula for the torus equivariant Mather class of a 
Schubert variety in $G/P$, and for a Schubert variety 
pulled back via the natural projection $G/Q \to G/P$. We apply this to find formulae for the local Euler obstructions 
of Schubert varieties, and for the torus equivariant localizations of the conormal spaces 
of these Schubert varieties. We conjecture positivity properties for the local Euler obstructions and 
for the Schubert expansion of Mather classes. We check the conjectures in many cases, 
by utilizing results of Boe and Fu about the characteristic cycles of the 
intersection homology sheaves of Schubert varieties.
We also conjecture that certain `Mather polynomials' are unimodal in general Lie type, and log concave in type A.
\end{abstract}
%\tableofcontents

\section{Introduction}\label{sec:intro}
Let $X$ be a complex, projective manifold and let $Y \subset X$ be a closed irreducible subvariety.
The Mather class $\cMa(Y)$ is a non-homogeneous element in the (Chow) homology $A_*(X)$.
Its original definition uses the Nash blowup of $X$ along $Y$,
but in this paper we work with the following equivalent definition, going back to Sabbah \cite{sabbah:quelques};
see also \cite{ginzburg:characteristic,AMSS:shadows}.

Let $T^*(X)$ be the cotangent bundle of $X$, and let $\iota:X \to T^*(X)$ be the zero section embedding.
The multiplicative group $\C^*$ acts on $T^*(X)$ by fibrewise dilation with character $\hbar^{-1}$.
To the subvariety $Y$ one associates the conormal space $T^*_Y(X) \subset T^*(X)$; 
this is an irreducible conic Lagrangian cycle in the cotangent bundle.
The Mather class $\cMa(Y)$ is the dehomogenization of the $\C^*$-equivariant class of the conormal space: 
\[ \cMa(Y):= (-1)^{\dim Y} (\iota^*[T^*_Y(X)]_{\C^*})_{\hbar =1}  \quad \in A_0^{\C^*}(X) \/. \] 
For example, it follows from definition that if $Y$ is smooth, then $\cMa(Y)$ is the push-forward 
of the homology class $c(T_Y) \cap [Y]$ inside $A_*(X)$.
If $Y=X$, then one recovers the well known index 
formula for the topological Euler characteristic:
\[ \chi(X) = (-1)^{\dim X} \int_X \iota^*[T^*_X(X)] \/.\] 
An equivariant version of Mather classes was defined by Ohmoto 
\cite{ohmoto:eqcsm}; we refer to \cref{sec:segre-csm} below for the precise details.

Let $G$ be a complex, semisimple Lie group, and fix $T \subset B \subset P \subset G$ a parabolic subgroup 
$P$ containing a standard Borel subgroup $B$ with a maximal torus $T$;
let $X=G/P$ be the associated flag manifold.
The goal of this paper is to study the $T$-equivariant Mather class
$\cMaT(Y) \in H_0^{T \times \C^*}(X)$ for a Schubert variety $Y$ in a cominuscule space $G/P$,
or when $Y$ is a Schubert variety in an arbitrary flag manifold $G/Q$ obtained by pulling back via the natural projection $G/Q \to G/P$.

The cominuscule spaces are a family of flag manifolds consisting of the ordinary Grassmannian, 
the maximal orthogonal Grassmannians in Lie types B,D, the Lagrangian Grassmannian in type C, quadrics, 
and respectively the Cayley plane and the Freudenthal variety 
in the exceptional Lie types $E_6$ and $E_7$.
\begin{footnote}
{
For the cominuscule property to hold, the maximal orthogonal
Grassmannian in type B needs to be regarded as a homogeneous space under the Lie group of type D;
see \cref{sec:preliminaries} below.
}
\end{footnote} 

Let $W$ denote the Weyl group and let $W^P$ be the subset of minimal length representatives.
For $w \in W^P$, let $X_w^{P,\circ}=BwP/P$ be the Schubert cell in $G/P$, and let $X_w^P:=\overline{X_w^{P,\circ}}$ be the Schubert variety;
let also $X_w^B := \overline{BwB/B}$ be the Schubert variety in $G/B$.

The Mather class of a Schubert variety is related to Chern-Schwartz-MacPherson (CSM) classes of its Schubert cells via the {\em local Euler obstruction} coefficients $e_{w,v}$:
\begin{equation}
\label{E:Ma=CSM} \cMa(X_w^P) = \sum_v e_{w,v} \csm(X_v^{P,\circ}) \/.
\end{equation}
These coefficients were defined by MacPherson \cite{macpherson:chern} and provide a subtle measure of the singularity of $X_w^P$ at $v$.
For instance, consider the parabolic Kazhdan-Lusztig  (KL) polynomial $P_{w,v}(q)$;
cf.~\cite{deodhar:geometricII}.
Then the equalities
\begin{equation}
\label{E:eP} e_{w,v} = P_{w,v}(1), \quad \forall v \in W^P \/,
\end{equation}
hold if and only if the characteristic cycle of the intersection homology (IH) sheaf of the Schubert variety $X_w^P$ is irreducible.
In general, the problem of finding the decomposition of the characteristic cycle of the IH sheaves into irreducible components is open, although some particular cases are known;
see e.g.~\cite{KL:topological,kashiwara.tanisaki:characteristic,MR1084458,boe.fu,evens.mirkovic:characteristic,braden:irred,williamson:reducible},
and also \cref{sec:positivity} below for more details.
We note that since CSM classes of Schubert cells can be explicitly calculated \cite{aluffi.mihalcea:csm,aluffi.mihalcea:eqcsm,rimanyi.varchenko:csm}, equation \eqref{E:Ma=CSM} shows that giving an algorithm to calculate Mather classes is equivalent to one for the local Euler obstructions.

To state a precise version of our results, we need to introduce more notation.
For $w \in W^P$, let $I(w)$ denote the inversion set of $w$;
this consists of positive roots $\alpha$ such that $w(\alpha) <0$,
and it may be identified with the {\em diagram} of $w$.
For a root $\alpha$ we denote by $\mathfrak{g}_\alpha$ the root subspace of $Lie(G)$ 
determined by $\alpha$ and by $\C_\alpha$ the one-dimensional $B$-module of weight $\alpha$.
It follows from \cite{singh:conormalII} (see also \cite{richmond.slofstra.woo}) that if $G/P$ is cominuscule and 
$w \in W^P$ then the vector space
$T_w := \oplus_{\alpha \in I(w)} \mathfrak{g}_{-\alpha}$
has a structure of a $B$-module.
Therefore 
\[ \mathcal{T}_w:= G \times^B T_w\]
is a vector bundle over the complete flag variety $G/B$.
Let $c(\mathcal{T}_w)$ denote its total Chern class.

The following is the main result of our paper;
see \cref{thm:mather,thm:pbMa} below.

\begin{theorem}
\label{thm:mainintro}
Let $G/P$ be a cominuscule space and projection $\pi:G/B \to G/P$, and let $w \in W^P$ be a minimal length representative.
Then the following hold:
\begin{enumerate}[label=(\alph*)]

\item
The Mather class of $X_w^P$ is given by 
\[ \cMa(X_w^P) = \pi_*( c(\mathcal{T}_w) \cap [X_w^B]) = \pi_* (\prod_{\alpha \in I(w)} c(G \times^B \C_{-\alpha}) \cap [X_w^B]) \/. \] 

\item
Let $Q \subset P$ be any parabolic subgroup, with $\pi_Q:G/Q \to G/P$ the natural projection.
Then the Mather class of the pull-back Schubert variety $\pi^{-1}(X_w^P)$ is
\[ \cMa(\pi_Q^{-1}(X_w^P)) = c(T_{\pi_Q}) \cap \pi_Q^*(\cMa(X_w^P))\/, \]
where $T_{\pi_Q}$ is the relative tangent bundle of the projection $\pi_Q$.

\item
The formulae in (a) and (b) hold in the $T$-equivariant setting.
\end{enumerate}
\end{theorem}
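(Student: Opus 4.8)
The plan is to reduce everything to a statement about conormal spaces, and to exploit the fact that in the cominuscule case the conormal space of a Schubert variety has an explicit description. The key input is the result quoted from \cite{singh:conormalII}: for $G/P$ cominuscule and $w\in W^P$, the space $T_w=\bigoplus_{\alpha\in I(w)}\mathfrak g_{-\alpha}$ is a $B$-module, and moreover (this is the geometric content I expect to be available in the body of the paper) the conormal space $T^*_{X_w^P}(G/P)$ is the image, under the natural map $T^*(G/P)\to T^*(G/P)$ covering $\pi$ appropriately, of a vector bundle over $X_w^B$ whose fibres are modelled on $T_w$; equivalently, the projectivized conormal space fibres over $X_w^B$ with the expected fibre dimension, so that $\pi$ restricts to a birational (in fact generically one-to-one) map onto $T^*_{X_w^P}(G/P)$. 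Granting this, part~(a) is essentially a Chern-class computation for an iterated projective bundle.

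For part~(a), first I would set up the zero-section formula for the Mather class: $\cMa(X_w^P)=(-1)^{\dim X_w^P}(\iota^*[T^*_{X_w^P}(G/P)]_{\C^*})_{\hbar=1}$. Then I would replace the conormal cycle by the pushforward of $[\mathcal E_w]$ from the total space of the bundle $\mathcal E_w:=\mathcal G\times^B T_w$ (or its closure) over $X_w^B$, using the birational model above; the degree-one and conicity conditions guarantee this pushforward carries the $\C^*$-equivariant class correctly. Restricting to the zero section and dehomogenizing at $\hbar=1$ then turns the fibrewise-dilation-equivariant class of a rank-$r$ bundle into its top Chern class, and a standard Segre/Chern manipulation (the same that underlies the formula $\cMa(Y)=\pi_*(c(T_Y)\cap[Y])$ for $Y$ smooth) converts $\iota^*[\mathcal E_w]$ into $c(\mathcal E_w^\vee)\cap[X_w^B]$ up to the sign $(-1)^{\dim X_w^P}=(-1)^{\dim X_w^B-\operatorname{rk}\mathcal E_w}$, which exactly cancels the sign coming from dualizing. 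Since $\mathcal E_w=\mathcal G\times^B(\bigoplus_\alpha \mathfrak g_{-\alpha})$, its Chern class is $\prod_{\alpha\in I(w)}c(\mathcal G\times^B\C_{-\alpha})$, and pushing forward along $\pi$ gives both displayed expressions in~(a). I would organize the bookkeeping of signs and duals carefully, since this is where errors creep in; the conceptual content is light once the conormal model is in hand.

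Part~(b) is formal and does not use the cominuscule hypothesis. The projection $\pi_Q:G/Q\to G/P$ is a smooth fibration with relative tangent bundle $T_{\pi_Q}$, and $\pi_Q^{-1}(X_w^P)$ is smooth over $X_w^P$ with the same fibre. The standard functoriality of conormal spaces under smooth maps (e.g.\ via the Euler sequence $0\to \pi_Q^*T^*(G/P)\to T^*(G/Q)\to T^*_{\pi_Q}\to 0$) identifies $T^*_{\pi_Q^{-1}(X_w^P)}(G/Q)$, pulled back to the zero section, with the pullback of $T^*_{X_w^P}(G/P)$ twisted by the relative cotangent directions; dehomogenizing converts the extra twist into the factor $c(T_{\pi_Q})\cap$, giving $\cMa(\pi_Q^{-1}(X_w^P))=c(T_{\pi_Q})\cap\pi_Q^*\cMa(X_w^P)$. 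Part~(c) requires only observing that every object used — the bundles $\mathcal G\times^B\C_{-\alpha}$, the classes $[X_w^B]$, the conormal spaces, the map $\iota$, and the $\C^*$-action — is $T$-equivariant, so the entire argument runs verbatim in $H^{T\times\C^*}_*$; no new ideas are needed, only the existence of an equivariant refinement of the conormal model, which is again supplied by the results cited above.

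The main obstacle is the first step of~(a): establishing (or citing precisely) that the conormal space of a cominuscule Schubert variety is birationally the $B$-homogeneous bundle with fibre $T_w$ over $X_w^B$, including the generic injectivity of $\pi$ restricted to it. Everything downstream is routine intersection theory; this geometric identification is the crux, and I expect it to be handled by the structural results of \cite{singh:conormalII} together with an analysis of the Bialynicki-Birula-type decomposition of the conormal space.
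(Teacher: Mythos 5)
Your overall strategy for (a) is the paper's strategy --- replace the conormal space by a birational model that is a homogeneous vector bundle over $X_w^B$, push forward, and convert the zero-section pullback into a Chern/Segre class computation --- and your treatments of (b) (functoriality of conormal spaces under the smooth map $\pi_Q$, self-intersection formula producing $c(T_{\pi_Q})$) and of (c) are sound. But there is a genuine error in the crux step of (a): you identify the birational model of $T^*_{X_w^P}(G/P)$ as a bundle over $X_w^B$ with fibre $T_w=\bigoplus_{\alpha\in I(w)}\mathfrak g_{-\alpha}$. This cannot be right: that bundle has total space of dimension $\ell(w)+\ell(w)=2\ell(w)$, whereas the conormal space is Lagrangian of dimension $\dim G/P$. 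The correct model (from \cite{singh:conormalII}) is $\mathcal U_w=\overline{BwB}\times^B\mathfrak u_w$ with fibre $\mathfrak u_w=\bigoplus_{\alpha\ge\alpha_P,\,w(\alpha)>0}\mathfrak g_{\alpha}$, of rank $\dim G/P-\ell(w)$; this is the \emph{complementary} collection of roots to $I(w)$. The bundle $\mathcal T_w=G\times^B T_w$ enters only at the very end, as the dual of the quotient in the exact sequence $0\to\mathcal U_w\to\pi^*T^*(G/P)|_{X_w^B}\to\mathcal T_w^*\to 0$: one has $s(T^*_{X_w^P}(G/P))=\pi_*(c(\mathcal U_w)^{-1}\cap[X_w^B])$ by birational invariance of Segre classes, and it is the extra factor $c(T^*(G/P))$ in the Sabbah-type identity $\cMav(Y)=c(T^*X)\cap s(T^*_YX)$ that, via Whitney and the projection formula, converts $c(\mathcal U_w)^{-1}$ into $c(\mathcal T_w^*)$.

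Your sign bookkeeping betrays the same confusion: you write $(-1)^{\dim X_w^P}=(-1)^{\dim X_w^B-\operatorname{rk}\mathcal E_w}$, which with your $\mathcal E_w$ of rank $\ell(w)$ equals $+1$, not $(-1)^{\ell(w)}$. Likewise, the ``standard manipulation'' for a subbundle $E'\subset E$ gives $\iota^*[E']=c_{\mathrm{top}}(E/E')\cap[X]$ (equivariantly, the $\C^*$-homogenized total Chern class of the quotient), not $c(E'^{\vee})\cap[X]$; getting the Chern class of the quotient rather than of the model bundle is exactly why the answer involves $I(w)$ even though the conormal fibre is indexed by the complement of $I(w)$. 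Finally, the passage from the birational pushforward of cycles to the zero-section pullback is not immediate (the zero sections of the two bundles do not commute with the map covering $\pi$, since $\pi:X_w^B\to X_w^P$ is not an isomorphism); the paper routes this through the birational invariance of Segre classes together with $\iota^*[C]=(c(E)\cap s(C))^{\chi}$, a step your sketch elides. With the model corrected and these two points supplied, your outline does reduce to the paper's proof.
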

We encourage the reader to jump directly to section \ref{ss:examples} for examples illustrating the formula in part (a) and its equivariant version.

The proof of part (a) exploits the observation that the $\C^*$-equivariant pull-back 
$\iota^* [T^*_{X_w^P}(G/P)]$ is essentially given by the Segre class of the conormal space $T^*_{X_w^P}(G/P)$; 
see \cref{sec:segre-cones} below.
To calculate this Segre class, 
we utilize a desingularization of the conormal space found by the second named author \cite{singh:conormalII}, 
together with the property that the Segre classes are preserved under birational push forward.

A different proof of the part (a) of Theorem \ref{thm:mainintro} may be obtained using the identification by 
Richmond, Slofstra and Woo \cite[Thm. 2.1]{richmond.slofstra.woo} of the Nash blowup of the Schubert varieties in cominuscule spaces.
In this paper we aimed to emphasize the 
equivalence between Mather classes and the Segre classes of the conormal spaces,
a point of view which we believe it will have further benefits for understanding the conormal spaces.

Part (b) follows from the Verdier-Riemann-Roch formula proved by Yokura \cite{yokura:verdier},
and from the invariance of Euler obstructions under smooth pull-back.
The latter statement is likely well known to experts, but we could not find it in the form we need in the literature.
We give it two proofs, one under very general hypotheses in \cref{prop:eulerpb},
and the second in \cref{sec:matherpb} which uses pull-backs of conormal spaces.
All constructions are $T$-equivariant, and part (c) follows.

We give two applications of Theorem \ref{thm:mainintro}.
The first is an explicit localization formula for the conormal spaces $T^*_{X_w^P}(G/P)$ of Schubert varieties in cominuscule spaces;
see \cref{thm:conormalloc}.

The second application is a formula for the local Euler obstructions of Schubert varieties.
The proof uses the equation \eqref{E:Ma=CSM} and the identification of the Poincar{\'e} duals of CSM classes obtained in \cite{AMSS:shadows}.
The resulting formula is given in \cref{thm:eulerobs}.
Based on many calculations in all cominuscule types we conjecture the following positivity properties;
cf.~ \cref{conj:pos,conj:eulerpos} below.

\begin{conjecture}[Positivity Conjecture]
\label{conj:intro}
Let $X=G/P$ be a cominuscule space and let $v,w \in W^P$.
\begin{enumerate}[label=(\alph*)]
\item
Consider the Schubert expansion
\[ \cMa(X_w^P) = \sum_{v \le w} a_{w,v} [X_v^P] \/.\]
Then $a_{w,v} \ge 0$.
A positivity property also holds for the equivariant Mather classes (cf.~Conj. \ref{conj:pos}).
\item
The local Euler obstruction coefficients are non-negative, i.e. $e_{w,v} \ge 0$.
\end{enumerate}
\end{conjecture}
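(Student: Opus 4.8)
The plan is to reduce the full conjecture to part (b), and then to attack (b) through the explicit formula of \cref{thm:eulerobs}, with the theory of characteristic cycles as an organizing device and a source of checks.

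\emph{Reduction of (a) to (b).} By \eqref{E:Ma=CSM} we have $\cMa(X_w^P)=\sum_{v\le w}e_{w,v}\,\csm(X_v^{P,\circ})$, and the Schubert expansion of each CSM class of a Schubert cell in a cominuscule $G/P$ is non-negative, say $\csm(X_v^{P,\circ})=\sum_{u\le v}b_{v,u}\,[X_u^P]$ with $b_{v,u}\ge 0$; for $G/B$ this is the positivity theorem of \cite{AMSS:shadows}, and the cominuscule $G/P$ case follows from it. If (b) holds, then $a_{w,u}=\sum_v e_{w,v}\,b_{v,u}\ge 0$, which is the non-equivariant half of (a); since the coefficients $e_{w,v}$ are values of a constructible function and are thus independent of the torus, the equivariant half follows in the same way from the Graham-positivity of the equivariant classes $\csmT(X_v^{P,\circ})$ proved in \cite{AMSS:shadows}. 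Hence it suffices to prove (b).

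\emph{Attack on (b).} The starting point is the closed formula for $e_{w,v}$ recorded in \cref{thm:eulerobs}, obtained by inverting \eqref{E:Ma=CSM} using \cref{thm:mainintro}(a) and the identification of the duals of CSM classes in \cite{AMSS:shadows}; a priori it is alternating, and the task is to rewrite it cancellation-free. Concretely one expands $c(\mathcal{T}_w)=\prod_{\alpha\in I(w)}\bigl(1+c_1(G\times^B\C_{-\alpha})\bigr)$ over subsets $S\subseteq I(w)$, caps with $[X_w^B]$, and pushes forward by $\pi$; what is needed is a Chevalley--Pieri-type rule for the classes $\bigl(\prod_{\alpha\in S}c_1(G\times^B\C_{-\alpha})\bigr)\cap[X_w^B]$ in the Schubert basis of $G/B$, together with a way to organize the cancellations created when $\pi_*$ annihilates the Schubert classes indexed outside $W^P$; here one should exploit the rigid combinatorial shape of the inversion sets $I(w)$ in the cominuscule case. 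A complementary route uses the decomposition $\CC(\caIC_{X_w^P})=\sum_{u\le w}m_{u,w}\,[T^*_{X_u^P}(G/P)]$, with $m_{w,w}=1$ and $m_{u,w}\in\Z_{\ge 0}$: passing to Chern classes gives the identity $P_{w,v}(1)=\sum_u m_{u,w}\,e_{u,v}$, hence $e_{w,v}=\sum_u \tilde m_{u,w}\,P_{u,v}(1)$ for the inverse matrix $\tilde m$. When $\CC(\caIC_{X_w^P})$ is irreducible --- for example for every Schubert variety in a Grassmannian, which admits a small resolution --- this reduces via \eqref{E:eP} to the non-negativity of parabolic Kazhdan--Lusztig polynomials, so (b) is already known there, and whenever Boe--Fu \cite{boe.fu} determine the multiplicities $m_{u,w}$ the identity can be used to test (b) beyond the irreducible range.

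\emph{The main obstacle.} Both expressions for $e_{w,v}$, the one in \cref{thm:eulerobs} and $\sum_u\tilde m_{u,w}P_{u,v}(1)$, carry genuine signs, because the characteristic cycles of IH sheaves of cominuscule Schubert varieties are reducible in general --- already in the Lagrangian Grassmannian --- so $\tilde m$ is not the identity. Neither expression is therefore manifestly non-negative, and a proof in full generality would seem to require either a cancellation-free combinatorial model for $e_{w,v}$, ideally an interpretation of $e_{w,v}$ as the Euler characteristic of a space built from the singularity of $X_w^P$ along $X_v^{P,\circ}$, or enough structural control on the multiplicities $m_{u,w}$ to bound the negative contributions.
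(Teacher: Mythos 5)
The statement you are addressing is a conjecture, and the paper does not prove it in general; it only establishes partial results, so the right benchmark is whether your plan matches or improves on those. Your reduction of (a) to (b) via $\cMa(X_w^P)=\sum e_{w,v}\csm(X_v^{P,\circ})$ and the non-negativity of the Schubert expansion of $\csm(X_v^{P,\circ})$ is exactly the paper's \cref{prop:posconj}(a), and your observation that irreducibility of the IH characteristic cycle reduces (b) to positivity of parabolic KL polynomials is \cref{prop:posconj}(c) combined with \cref{thm:poseulerobs} (types A and D via Bressler--Finkelberg--Lunts and Boe--Fu, odd quadrics by direct computation). Your honest assessment of the obstacle --- reducibility of characteristic cycles already in $\LG(n,2n)$, so that neither \cref{thm:eulerobs} nor the KL-class inversion is manifestly positive --- is also the paper's own assessment. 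So for the provable part you take essentially the same route as the paper.

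Two corrections. First, your claim that the equivariant half of (a) follows from (b) together with ``Graham-positivity of the equivariant classes $\csmT(X_v^{P,\circ})$ proved in \cite{AMSS:shadows}'' is not available: only the \emph{non-equivariant} positivity of CSM classes of Schubert cells is a theorem (\cite{huh:csm} for Grassmannians, \cite{AMSS:shadows} in general); the equivariant version is itself still a conjecture of \cite{aluffi.mihalcea:eqcsm}. This is why the paper's \cref{prop:posconj} only deduces the non-equivariant part of \cref{conj:pos} from \cref{conj:eulerpos}. Second, the identity relating KL polynomials to Euler obstructions carries signs that you dropped: from \eqref{E:KL}, \eqref{E:KLP} and \eqref{E:cMa} one gets $P_{w,v}(1)=\sum_u(-1)^{\ell(w)-\ell(u)}m_{w,u}\,e_{u,v}$, not $\sum_u m_{u,w}e_{u,v}$; the signs disappear only when the characteristic cycle is irreducible, which is precisely the case where the reduction is useful. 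With these caveats, your proposal correctly locates what is known and what is open, but it does not prove the conjecture --- nor does the paper.
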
 

By the equation \eqref{E:Ma=CSM} and positivity of the non-equivariant CSM classes of Schubert cells \cite{huh:csm,AMSS:shadows}, 
part (b) implies the non-equivariant positivity from part (a).
It is tempting to make this conjecture in {\em any} flag manifold $G/Q$,
but unfortunately we do not have substantial evidence in this generality.
Most of the other cases we can check follow from Proposition \ref{prop:eulerpb} below,
which states that the local Euler obstructions are preserved under smooth pull-backs;
therefore one may expand this conjecture to include pull-backs of Schubert varieties from $G/P$.

By the positivity of KL polynomials and the equation \eqref{E:eP}, $e_{w,v} >0$ whenever the characteristic cycle of the IH sheaf of $X_w^P$ is irreducible.
This holds for cominuscule spaces in Lie types A and D, by results from \cite{MR1084458} and \cite{boe.fu}.
Boe and Fu also prove positivity of Euler obstructions for the odd-dimensional quadrics (in Lie type B).
Therefore, Conjecture \ref{conj:intro} holds in all these cases.
See \cref{sec:positivity} below for more details.

We also conjecture a unimodality property for the {\em Mather polynomial} of $w \in W^P$.
The Mather polynomial is obtained from the Schubert expansion of the Mather class by replacing each Schubert class $[X_v^P]$ by $x^{\ell(v)}$.
We conjecture that the resulting polynomial is unimodal, in the sense of \cite{stanley:Log}.
For the ordinary Grassmannians, calculations suggest that the polynomial is also log concave;
see \cref{ss:logc} for details and examples.

Formulas for the Mather classes and for the local Euler obstructions have been found by B. Jones \cite{jones:csm} 
in the case of Grassmann manifolds, and in \cite{raicu:characters,zhang:chern,rimanyi.protampan,timchenko} for various types of degeneracy loci.
Jones' proof is based on the fact that if $\pi': Z_w \to X_w^P$ 
is a small resolution of $X_w^P$ (in the sense of intersection homology) \emph{and}
if the characteristic cycle of the IH sheaf of $X_w^P$ is irreducible, then the Mather class satisfies 
\[ \cMa(X_w^P) =  \pi'_*(c(T_{Z_w}) \cap [Z_w]) \/, \] 
where $c(T_{Z_w})$ is the total Chern class of the tangent bundle of $Z_w$.
Small resolutions for the Schubert varieties in
Grassmannians were constructed by Zelevinsky \cite{MR705051},
and Bressler, Finkelberg and Lunts \cite{MR1084458} proved that
the characteristic cycles of the IH sheaves of Schubert varieties are irreducible.
Outside the type A Grassmannian, Schubert varieties may not admit small resolutions; see 
\cite{sankaran.v:small-res} and also \cite[Example 7.15]{perrin:small-res}. 

Boe and Fu \cite{boe.fu} used delicate techniques from geometric analysis to find 
formulae for the local Euler obstruction $e_{w,v}$ of the Schubert varieties in cominuscule 
spaces $G/P$ of classical Lie types A--D.
Using recursive formulae for the KL polynomials,
they were able to show that the identities \eqref{E:eP} hold in Lie types A and D, and fail in general
for types B, C.
We included examples such as \cref{ex:EulerLG36,ex:LG24div},
recovering instances of reducible IH sheaves from \cite{boe.fu} and \cite{kashiwara.tanisaki:characteristic},
and obtained with the formulae from this paper.
In future work, we plan to compare our formula from 
Theorem \ref{thm:eulerobs} to the formulae in \cite{boe.fu}.

{\em Acknowledgements.} LM would like to thank P. Aluffi, J. Sch{\"u}rmann and C. Su for related collaborations, and to D. Anderson and E. Richmond for useful discussions.
RS would like to thank D. Muthiah for useful discussions.
To perform calculations in (equivariant) cohomology,
we utilized A. Buch's Maple program {\em Equivariant Schubert Calculator},
available at \texttt{https://sites.math.rutgers.edu/$\sim$asbuch/equivcalc/}

Throughout the paper we utilize the Chow (co)homology theory \cite{fulton:IT},
and its equivariant version from \cite{edidin.graham:eqchow}.
This is related to the ordinary (possibly equivariant) (co)homology via the cycle map  - see \cite[Ch.~19]{fulton:IT} and \cite[\S 2.8]{edidin.graham:eqchow}; 
for flag manifolds this map is an isomorphism \cite[Ex.~19.1.11]{fulton:IT}.
% so all our results hold in that context as well.
Finally, we work over the field of complex numbers.

\section{Segre classes of cones}
\label{sec:segre-cones}
\subsection{Segre classes and the pull back via the zero section}
The treatment in this section follows largely \cite[\S 4]{fulton:IT},
but we also used \cite[\S1]{BBM:book} and \cite{BBM:Springer}.
Let $C$ be a cone in the sense of \cite[\S 4 and Appendix B.5]{fulton:IT}.
For the applications envisioned in this note, we assume in addition that
$C$ is a closed subcone of a vector bundle $E \to X$.
We consider the projective completion $\bP(E \oplus \one)$,
and we denote by $\cO_{E}(-1)$ the tautological bundle of lines in $E\oplus\one$.
Denote by $\cO_C(-1)$ the restriction of $\cO_{E}(-1)$ to
the projective completion $\overline{C}=\bP(C \oplus \one)  \subset \bP(E \oplus \one)$,
and let $q: \bP(E \oplus \one) \to X$ be the natural projection.

The {\em Segre class} of $C$ is the (non-homogeneous) class in the Chow group $A_*(X)$ defined by:
\begin{equation}
\label{def:segre-class}
s(C):=q_*\left(\frac{[\overline{C}]}{c(\cO_C(-1))}\right)= q_*\left(\sum_{i \ge 0}c_1(\cO_C(1))^i \cap [\overline{C}]\right) \/.
\end{equation}
If $C=E$ is a vector bundle over $X$ then its Segre class is $s(E)=c(E)^{-1}\cap [X]$, see \cite[Prop.~4.1]{fulton:IT}.

Suppose now that the cone $C \subset E$ is pure dimensional, with $\dim C =\operatorname{rank}(E)$.
(This will be the case for our application, when $C$ is the conormal space of a subvariety.)
In this case, the Segre class of $C$ is related to the pull back $\iota^*[C]$ of the class of $C$ via the zero section $\iota:X \to E$.
Observe however that the Segre class is non-homogeneous, while $\iota^*[C]$ is a class in $A_0(X)$.
In order to relate the two, one needs to work in the $\C^*$-equivariant Chow group;
this was one of the observations in \cite[\S 2]{AMSS:shadows}.
We recall the relevant facts next,
referring the reader to \cite{edidin.graham:eqchow} for details on equivariant Chow groups.

The starting point is the formula from \cite[Example 4.1.8]{fulton:IT}, which states that
\begin{equation}
\label{E:iota}
\iota^*[C] = (c(E) \cap s(C) )_{0}\/,
\end{equation}
in the Chow group of $X$, where
$(a)_{0}$ means taking the homogeneous component of degree $0$ of the class $a \in A_*(X)$.
We will need to `homogenize' this formula.

There is a $\C^*$-action on $E$ by dilation by a character $\chi$, which extends to an action on
$E \oplus \one$ by letting $\C^*$ act trivially on the second component.
This induces a $\C^*$-action on the projective completion $\P(E \oplus \one)$.
Both $C$ and its closure are $\C^*$-stable subschemes; the action of $\C^*$ restricted to the base $X$ is trivial.
The character $\chi$ determines a class in the equivariant Chow group $A^1_{\C^*}(pt)$ of degree $1$, denoted in the same way.
Since $\C^*$ acts trivially on $X$,
a class $a \in  A_0^{\C^*}(X)$ is equivalent to a {\em non-homogeneous} class $a_0 + a_1 + \ldots \in A_*(X)$ ($a_i \in A_i(X)$) obtained by dehomogenizing $a$.
Conversely, if $a=a_0 + a_1 + \ldots \in A_*(X)$ is a non-homogeneous class, its {\em $\chi$-homogenization} is the class
\begin{equation}
\label{E:homoga}
a^\chi:= a_0 + a_1 \chi + a_2 \chi^2 + \ldots \in A_0^{\C^*}(X) \/.
\end{equation}

Observe now that all the classes in the equation \eqref{E:iota} are $\C^*$-equivariant,
thus the formula extends to the equivariant context.
Further, by \cite[Proposition 2.7]{AMSS:shadows},
\begin{equation}
\label{E:c*iota}
\iota^*[C]_{\C^*}= (c(E) \cap s(C))^\chi \in A_0^{\C^*}(X) \/.
\end{equation}
(The proposition in {\em loc.cit.}~is stated in terms of the {\em shadow} of the cone $\overline{C}$;
by \cite[Lemma 2.1(a) and Lemma 2.2]{AMSS:shadows}, the shadow equals $c(E) \cap s(C)$.)
In \cref{sec:segre-csm}
we will be interested in the pull back $\iota^*[C]_{\C^*}$,
and we will use this formula to calculate it.

One of the fundamental properties of the Segre classes is their birational invariance, recalled next. 
Similar statements can be found in \cite[p.~10]{BBM:book} (without proof), and, for the 
\emph{normal cone} of a subvariety, in \cite[Proposition 4.2]{fulton:IT}.
For the convenience of the reader we include a proof.
\begin{lemma}
\label{lemma:segrepf}
Let $f:X' \to X$ be a proper morphism of irreducible non-singular varieties, $E \to X$ a vector bundle, and $C$ 
an irreducible subcone of $E$ over a closed subvariety $Y \subset X$. Let $Y' := f^{-1}(Y)$ and assume that 
$C'$ is a subcone of $f^*(E)$ over $Y'$ such that we have a commutative diagram
\[
\begin{tikzcd}
C' \arrow[r,"g"]\arrow[d] & C \arrow[d] \\ Y' \arrow[r,"f"] & Y
\end{tikzcd}
\]
where $g:C' \to C $ is proper and birational.
Then
\begin{equation}
\label{E:pf}
f_*(s(C')) =s(C) \in A_*(Y) \/.
\end{equation}
\end{lemma}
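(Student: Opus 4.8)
The plan is to reduce the statement to the birational invariance of Segre classes of \emph{projective cones}, i.e.\ of the projective completions $\overline{C}$ and $\overline{C'}$, and then to invoke the projection formula. Concretely, I would first set up the projective completions: let $\overline{C} = \bP(C\oplus\one) \subset \bP(E\oplus\one)$ with projection $q:\bP(E\oplus\one)\to X$, and similarly $\overline{C'} = \bP(C'\oplus\one) \subset \bP(f^*E\oplus\one)$ with projection $q':\bP(f^*E\oplus\one)\to X'$. The map $g:C'\to C$ together with the identification $f^*E \oplus \one = (f\circ q')^*$ of the relevant bundle should extend to a morphism $\overline{g}:\overline{C'}\to\overline{C}$ fitting into a commutative square with the projection $\overline{f}:\bP(f^*E\oplus\one)\to\bP(E\oplus\one)$ (base change of $f$), and $\overline{g}$ is again proper; I would check it is birational onto $\overline{C}$ because $g$ is birational onto $C$ and $C$ is dense in $\overline{C}$ (the hyperplane at infinity is a proper closed subset on both sides). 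Moreover $\overline{g}^*\cO_C(-1) = \cO_{C'}(-1)$, since the tautological line bundle is pulled back from $\bP(E\oplus\one)$ and $\overline{f}^*\cO_E(-1) = \cO_{f^*E}(-1)$.

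Next I would run the computation. Since $\overline{g}$ is proper and birational between varieties of the same dimension, $\overline{g}_*[\overline{C'}] = [\overline{C}]$ in $A_*(\overline{C})$. Using this together with $c_1(\cO_{C'}(1)) = \overline{g}^*c_1(\cO_C(1))$ and the projection formula for $\overline{g}$,
\[
\overline{g}_*\!\left(\sum_{i\ge 0} c_1(\cO_{C'}(1))^i \cap [\overline{C'}]\right) = \sum_{i\ge 0} c_1(\cO_C(1))^i \cap [\overline{C}]
\]
in $A_*(\overline{C})$. Now push forward along the projection to the base. By the commutative square, $q' = \pi' \circ (\text{inclusion})$ factors compatibly, so $f_* \circ q'_* = q_* \circ \overline{f}_*$ on classes supported on $\overline{C'}$, and $\overline{f}_*$ restricted to $\overline{C'}$ is $\overline{g}_*$ followed by the inclusion $\overline{C}\hookrightarrow\bP(E\oplus\one)$. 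Applying $q_*$ to the displayed identity and using the definition \eqref{def:segre-class} of $s(C)$ and $s(C')$ gives $f_*(s(C')) = s(C)$ in $A_*(X)$; the refinement to $A_*(Y)$ is automatic since all cycles involved are supported over $Y'$ and $Y$ respectively.

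The main obstacle, I expect, is the bookkeeping around the various projective completions and the base-change square: one must verify carefully that $\overline{g}$ is well defined, proper, and birational, that the tautological bundles match under pullback, and that the pushforward maps compose correctly with the projections $q,q'$ (rather than just asserting it). None of these steps is deep, but getting the compatibility of $\overline{f}_*$, $\overline{g}_*$, $q_*$, $q'_*$ stated precisely is where the proof needs care. Once that square is in place, the argument is a one-line application of $\overline{g}_*[\overline{C'}]=[\overline{C}]$, the projection formula, and the definition of the Segre class; I would also remark that the hypothesis that $f$ is a morphism of \emph{nonsingular} varieties is only used to make sense of the ambient Chow groups and pullbacks cleanly, and the core of the argument is really just the birational invariance of degrees of zero-cycles on the projectivized cones.
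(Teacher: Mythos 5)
Your proposal is correct and follows essentially the same route as the paper's proof: extend $g$ to a proper birational morphism $\overline{g}:\bP(C'\oplus\one)\to\bP(C\oplus\one)$ obtained by restricting the base-change map $\bP(f^*E\oplus\one)\to\bP(E\oplus\one)$, note that $\overline{g}_*[\overline{C'}]=[\overline{C}]$ and that the tautological bundles pull back to one another, and conclude by the projection formula and the commutativity of the square relating $q,q',f$. The compatibility checks you flag as the "main obstacle" are exactly the ones the paper carries out, so there is no gap.
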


\begin{proof}
The morphism $f$ induces a morphism of vector bundles
$f^* (E)\oplus \one \to E \oplus \one$,
which in turn induces a morphism
$F: \bP(f^* (E) \oplus \one) \to \bP(E \oplus \one)$ between projective completions.
Let $G:\overline{C'}= \bP(C' \oplus \one) \to \bP(C \oplus \one)$ be the restriction. 
There is a commutative diagram 
\[
\begin{tikzcd}
\P(C' \oplus \one) \arrow[r,"G"]\arrow[d,"q'"] & \P(C \oplus \one) \arrow[d,"q"] \\ Y' \arrow[r,"f"] & Y
\end{tikzcd}
\]
The birationality of $g$ implies that
\begin{equation}
\label{E:pushC}
G_*[\overline{C'}] =[\overline{C}] \in A_*(\P(E \oplus \one)) \/.
\end{equation}
Now, since $C'$ is a subcone of $f^*(E)_{|Y'}$, we have
\begin{equation}
\label{E:O-1}
G^*(\cO_{C}(-1)) = \cO_{C'}(-1) \/,
\end{equation}
as both sides are the restriction of $\cO_{f^*(E)}(-1)=f^* \cO_{E}(-1)$ to $Y'$.
Following the definition of the Segre class (\cref{def:segre-class}), we have,
\[
\begin{split}
f_*(s(C'))&  = f_* q'_*\left(\frac{[\overline{C'}]}{c(\cO_{C'}(-1))}\right)
=q_* G_*\Bigl(\frac{[\overline{C}]}{G^*c(\cO_{C}(-1))}\Bigr) \\
& = q_* \Bigl(\frac{[\overline{C}]}{c(\cO_{C}(-1))}\Bigr) = s(C) \/.
\end{split}
\]
Here the third equality uses the projection formula and equations \eqref{E:pushC} and \eqref{E:O-1}.
This finishes the proof.
\end{proof}

All results extend naturally to the case where
$X$ is a variety with an action of a torus $T$,
$C$ is a $T \times \C^*$-invariant cone,
and the map $C \to X$ is $T \times \C^*$ equivariant (the $\C^*$ acting trivially on $X$).
For instance, in equation \eqref{E:c*iota}, the class $\iota^*[C]_{T \times \C^*}$ belongs to
$A_0^{T \times \C^*}(X)$, the $T \times \C^*$ equivariant Chow group.

\subsection{Conormal spaces}
The cones most important in this note are the conormal spaces of subvarieties,
whose definition we recall next.
Let $X$ be a smooth, irreducible, complex algebraic variety, and let $Y \subset X$ be a
closed irreducible subscheme.
Let $Y^{reg}$ be any any smooth dense set of $Y$.
The {\em conormal space} $T^*_YX$ is the closure of the conormal bundle $T^*_{Y^{reg}}X$
inside the cotangent bundle $T^*X$.
This is a cone in the sense of the previous section,
and also a closed subvariety of dimension $\dim X$,
contained in the restriction $T^*X_{|Y}$.
In particular, it is stable under the $\C^*$-dilation on the fibres of $T^*X$, and also under any 
group $G$ leaving $Y$ and $X$ invariant.
If one regards $T^*X$ as a symplectic manifold, then the conormal space is an irreducible conic Lagrangian cycle.
In fact, any irreducible, conic Lagrangian cycle is the conormal cone of some subvariety; 
see~\cite[Thm. E.6]{HTT} (where it is attributed to Kashiwara) and also \cite[\S 1]{kennedy:specialization}.

\section{Mather classes and CSM classes}\label{sec:segre-csm}
A question with a long and distinguished history is to define analogues of the total Chern class for singular varieties.
The Mather classes and the Chern-Schwartz-MacPherson (CSM) classes, considered in this note, are among these classes.
We recall their definition next.

\subsection{Mather classes}
Let $X$ be a smooth complex algebraic variety, and let $Y \subset X$ be a closed irreducible subvariety.
The {\em Mather class} of $Y$ is a non-homogenous homology class $\cma(Y) \in A_*(Y)$ with the property that if $Y$ is smooth then $\cma(Y) = c(T Y )\cap [Y]$.
Its original definition involves the Nash blowup of $Y$, but for the purpose of this note we use a variant of a result of Sabbah \cite{sabbah:quelques} (see also \cite{ginzburg:characteristic,PP:hypersurface}) relating the Mather class to the the class of the conormal space of $Y$ in $X$.
This variant appeared in \cite[Corollary 4.5 and Corollary 3.4]{AMSS:shadows}.
Further, we  work in the equivariant context, using the equivariant Mather class defined by Ohmoto \cite{ohmoto:eqcsm}; the corresponding class is denoted by $\cMaT(Y) \in A_*^T(X)$.

\begin{theorem}
[cf. \cite{AMSS:shadows}]
\label{thm:segre-mather}
Let $Y\subseteq X$ be a $T$-stable closed irreducible subvariety of the smooth variety $X$ and assume that $\C^*$ acts by dilation on the cotangent bundle $T^*X$ with character $\hbar^{-1}$.
Then the homogenization (cf.~ \eqref{E:homoga} above) of the $T$-equivariant Chern-Mather class satisfies
\begin{equation}
\label{E:cmath}
\cMaT(Y)^\hbar = (-1)^{\dim Y}\iota^*[T^*_YX]_{T \times \C^*}\/,
\end{equation}
as classes in $A_0^{T \times \C^*}(X)$.
\end{theorem}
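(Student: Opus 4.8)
The plan is to derive \eqref{E:cmath} by combining the Sabbah-type characterization of the Mather class with the homogenization formula \eqref{E:c*iota} for the pull-back of a conic cone via the zero section. First I would recall from \cite{AMSS:shadows} (in the $T$-equivariant form of Ohmoto \cite{ohmoto:eqcsm}) that the equivariant Mather class of $Y$ is the dehomogenization of the signed conormal class, i.e. that $\cMaT(Y) = (-1)^{\dim Y}(\iota^*[T^*_YX]_{T\times\C^*})_{\hbar=1}$; equivalently, since the conormal space $T^*_YX$ is a pure-dimensional conic subcone of $E := T^*X$ with $\dim T^*_YX = \operatorname{rank}(E) = \dim X$, one may apply \eqref{E:c*iota} with $\chi = \hbar^{-1}$ to write $\iota^*[T^*_YX]_{T\times\C^*} = (c(T^*X)\cap s(T^*_YX))^{\hbar^{-1}}$ in $A_0^{T\times\C^*}(X)$.

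The key observation is that homogenizing and then setting the equivariant parameter to $1$ are inverse operations on the respective sides of the displayed identity. Concretely, the class $(-1)^{\dim Y}\,c(T^*X)\cap s(T^*_YX)$ is exactly the non-homogeneous class in $A_*^T(X)$ whose definition unwinds to $\cMaT(Y)$: its degree-zero component recovers $\iota^*[T^*_YX]$ via \eqref{E:iota}, matching the Euler-obstruction/index normalization, and its higher components give the lower-dimensional pieces of the Mather class. Thus $\cMaT(Y) = (-1)^{\dim Y}\,c(T^*X)\cap s(T^*_YX)$ as non-homogeneous classes, and \eqref{E:homoga} gives $\cMaT(Y)^\hbar = \bigl((-1)^{\dim Y}\,c(T^*X)\cap s(T^*_YX)\bigr)^{\hbar^{-1}}$. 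It remains only to match the $\hbar$ versus $\hbar^{-1}$ bookkeeping: the $\C^*$ acts on the \emph{fibres} of $T^*X$ with character $\hbar^{-1}$, so the tautological quotient on $\bP(T^*X\oplus\one)$ carries weight $\hbar^{-1}$, and tracing this weight through \eqref{def:segre-class} and \eqref{E:homoga} shows that the homogenization variable appearing on the right of \eqref{E:c*iota} is consistently the one denoted $\hbar$ in the statement (up to the sign convention already absorbed in $(-1)^{\dim Y}$). Combining these, both sides of \eqref{E:cmath} equal $(-1)^{\dim Y}(c(T^*X)\cap s(T^*_YX))^\hbar$, proving the theorem.

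The main obstacle I anticipate is not any deep geometry — the essential content is already contained in \cite{AMSS:shadows} and in \eqref{E:c*iota} — but rather the careful reconciliation of sign and weight conventions across the three sources being stitched together: Fulton's \eqref{E:iota}, the shadow formalism of \cite{AMSS:shadows}, and Ohmoto's equivariant Mather class. In particular one must check that the $\C^*$-weight is taken to be $\hbar^{-1}$ on the cotangent fibres (so that conormal directions are dilated the ``correct'' way), that the sign $(-1)^{\dim Y}$ is the one forced by the index formula $\chi(X) = (-1)^{\dim X}\int_X \iota^*[T^*_XX]$ in the case $Y = X$, and that no discrepancy arises from the convention for $\cO_C(-1)$ versus $\cO_C(1)$ in \eqref{def:segre-class}. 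Once these are pinned down, the proof is a short formal manipulation; I would present it as: (i) quote the equivariant Sabbah/MacPherson identity, (ii) apply \eqref{E:c*iota} to $C = T^*_YX$, (iii) homogenize and compare.
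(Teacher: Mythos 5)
The paper gives no independent proof of this statement: it is imported from \cite[Cor.~4.5 and Cor.~3.4]{AMSS:shadows}, and your plan --- quote the AMSS identity, apply \eqref{E:c*iota} to the conormal cone, and use that homogenization and dehomogenization are mutually inverse because $\C^*$ acts trivially on $X$ --- is exactly the paper's treatment (the paper runs the same manipulation immediately \emph{after} the theorem to produce \eqref{E:cmatv}). One correction to your bookkeeping, which is precisely the convention issue you flagged as remaining: the class $c(T^*X)\cap s(T^*_YX)$ equals the \emph{dual} Mather class $\cMaTv(Y)$, whose homogeneous components differ from those of $\cMaT(Y)$ by degree-dependent signs, not by the single overall sign $(-1)^{\dim Y}$ as in your intermediate identity; correspondingly, since the dilation character is $\hbar^{-1}$, the homogenization appearing in \eqref{E:c*iota} is by $-\hbar$, so the common value of the two sides of \eqref{E:cmath} is $(-1)^{\dim Y}\bigl(c(T^*X)\cap s(T^*_YX)\bigr)^{-\hbar}$ rather than $(\cdots)^{\hbar}$.
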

It will be convenient to work with a dehomogenized variant of this equation.
Recall that by equation \eqref{E:c*iota} above,
\begin{align*}
\iota^*[T^*_Y(X)]_{T \times \C^*}= (c^T(T^*X) \cap s^T(T^*_Y X))^{-\hbar},
\end{align*}
since the $\C^*$ action is induced by $\hbar^{-1}$.
By equation \eqref{E:cmath} this implies that
\[
(-1)^{\dim Y}\cMaT(Y)^{-\hbar}= (c(T^*X) \cap s^T(T^*_Y(X)))^{\hbar}\/.
\]
After dehomogenizing, i.e. setting $\hbar= 1$, we obtain the expression
\begin{equation}
\label{E:cmatv}
\cMaTv(Y)= c^T(T^*X) \cap s^T(T^*_Y(X)) \/,
\end{equation}
where $\cMaTv(Y):= ((-1)^{\dim Y}\cMaT(Y)^{-\hbar})|_{\hbar = 1}$.

In other words, the class $\cMaTv(Y)$ is obtained from $\cMaT(Y)$
by changing signs of each homogeneous component according to its cohomological degree.
This is called the {\em dual Chern-Mather class};
it appears naturally when relating Chern-Mather classes to characteristic cycles on the cotangent bundle; cf.~\cite{sabbah:quelques}.

\subsection{Chern-Schwartz-MacPherson classes}
Let $X$ be any complex algebraic variety endowed with a Whitney stratification $\{ S_i \}$ of smooth constructible subsets.
Such a stratification always exists;
see \cite[Thm.~2.2]{verdier:stratifications} for the algebraic context,
and \cite[Thm.~19.2]{whitney:tangents} for the analytic context.
(Later, $X$ will be a Schubert variety with the stratification given by its Schubert cells.)

Denote by $\mathcal{F}(X)$ the group of constructible functions of $X$.
Its elements are finite sums of the form $\sum a_i \one_{W_i}$,
where $a_i \in\mathbb Z$, the $W_i \subset X$ are constructible subsets,
and $\one_{W_i}$ is the indicator function, which equals $1$ for points on $W_i$ and $0$ otherwise.
There are push-forward and pull-back operations defined as follows.
If $f: Z \to X$ is a {\em proper} morphism, then $f_*(\one_W)(x) = \chi(f^{-1}(x) \cap W)$,
where $\chi$ denotes the topological Euler characteristic;
one extends this further by linearity.
For any morphism $f:Z \to X$,
the pull back $f^*: \mathcal{F}(X) \to \mathcal{F}(Z)$ is defined by $f^*(\varphi)(z) = \varphi(f(z))$,
for $\varphi \in \mathcal{F}(X)$.

Proving a conjecture of Grothendieck and Deligne,
MacPherson \cite{macpherson:chern} defined a transformation $c_*:\mathcal{F}(X) \to H_*(X)$
which satisfies $c_*(\one_X) = c(T(X)) \cap [X]$ if $X$ is smooth,
and is functorial with respect to proper morphisms $f:Z \to X$.
This means that there is a commutative diagram
\[
\begin{tikzcd}
\mathcal{F}(Z) \arrow[r,"c_*"] \arrow[d,"f_*"] & H_*(Z) \arrow[d,"f_*"] \\
\mathcal{F}(X) \arrow[r,"c_*"]                 & H_*(X)
\end{tikzcd}
\]
If $W \subset X$ is a constructible subset,
the class $\csm(W):= c_*(\one_W) \in H_*(X)$ is called the {\em Chern-Schwartz-MacPherson (CSM)} class of $W$.
One may regard the CSM classes as an analogue of 
the total Chern class of the tangent bundle of $X$ in the case $X$ is singular.

MacPherson's definition of the transformation $c_*$ uses Mather classes,
and a constructible function $\Eu_X$ on $X$, called the {\em local Euler obstruction}.
The original definition of the local Euler obstruction in \cite{macpherson:chern}
uses transcedental methods (the analytic topology).
Later, Gonzalez-Sprinberg and Verdier \cite{gonzalez-sprinberg}, found an algebraic definition,
thus extending MacPherson's transformation to one with values in the Chow group $A_*(X)$.
More recently, Ohmoto \cite{ohmoto:eqcsm} generalized this to the equivariant context.
We recall the following properties of $\Eu_X$ - see \cite{macpherson:chern,gonzalez-sprinberg,brasselet.schwartz}:

\begin{lemma}
\label{lemma:euler}
\begin{enumerate}[label=(\alph*)]
\item
The local Euler obstruction $\Eu_X$ is constant along the strata of any Whitney stratification.
\item
$\Eu_X(x) =1$ if $X$ is nonsingular at $x$.
\item
If $X= X_1 \times X_2$ as varieties, then $\Eu_{X_1 \times X_2}(x_1,x_2) = \Eu_{X_1}(x_1) \cdot \Eu_{X_2}(x_2)$.
\end{enumerate}
\end{lemma}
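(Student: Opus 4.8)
\textbf{Plan of proof for \cref{lemma:euler}.}

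The plan is to reduce every assertion to the algebraic description of $\Eu_X$ via Nash blowups, as in \cite{gonzalez-sprinberg}, together with the original transcendental description in \cite{macpherson:chern}; both are available to us since all three statements are local and do not involve the ambient $X$ beyond a neighborhood of the point. Recall that if $\nu: \widehat X \to X$ is the Nash blowup, with tautological (Nash) bundle $\widehat T$ on $\widehat X$ extending $TX^{reg}$, then $\Eu_X(x)$ is computed as the degree of a certain localized Chern class of $\widehat T$ over the fiber $\nu^{-1}(x)$; equivalently, in MacPherson's picture, $\Eu_X(x)$ is the obstruction to extending a suitable radial vector field across $\nu^{-1}(x)$, an integer attached to the germ $(X,x)$.

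For part (a), the idea is that a Whitney stratification $\{S_i\}$ is by construction a stratification with respect to which $\Eu_X$ is constructible, and moreover the local topological type of $(X,x)$ — hence any quantity, such as $\Eu_X(x)$, defined purely from the germ $(X,x)$ and its embedded topology — is locally constant along a stratum of a Whitney stratification. First I would invoke the Thom--Mather first isotopy lemma (or the topological triviality of Whitney stratified sets along strata, as in \cite[Thm.~19.2]{whitney:tangents}), which gives that for $x,x'$ in the same stratum $S_i$ there is a homeomorphism of germs $(X,x) \cong (X,x')$ compatible with the stratification; since $\Eu_X$ depends only on this germ, it follows that $\Eu_X$ is constant on each $S_i$. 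For part (b), the point is simply that when $X$ is nonsingular at $x$ the Nash blowup is an isomorphism over a neighborhood of $x$, so $\widehat T = TX$ there, and the localized Chern class computation returns the Euler characteristic of a point, namely $1$; alternatively MacPherson's obstruction vanishes because a nowhere-zero radial field extends trivially. Part (c) is the multiplicativity: here I would use that the Nash blowup of a product is (an open subset determined by, or precisely) the product of the Nash blowups, $\widehat{X_1 \times X_2} = \widehat X_1 \times \widehat X_2$ with $\widehat T = \mathrm{pr}_1^* \widehat T_1 \oplus \mathrm{pr}_2^* \widehat T_2$, and then the localized top Chern class over a product fiber $\nu_1^{-1}(x_1) \times \nu_2^{-1}(x_2)$ factors as a product by the multiplicativity of Chern classes of a direct sum and the Künneth formula; the degrees multiply, giving $\Eu_{X_1\times X_2}(x_1,x_2) = \Eu_{X_1}(x_1)\cdot\Eu_{X_2}(x_2)$. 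One can also see (c) transcendentally, since the link of $(x_1,x_2)$ in $X_1\times X_2$ is the join of the links of $x_1$ and $x_2$, and the relevant index is multiplicative under joins.

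The main obstacle is that $\Eu_X$ has several equivalent definitions (transcendental obstruction-theoretic, algebraic via localized Chern classes on the Nash blowup, and via $c_*$-functoriality), and the cleanest argument for each of (a), (b), (c) uses a different one; so the real work is to be careful that we are entitled to pass between them. For (a) I lean on the analytic/topological description plus Whitney's local triviality; for (b) and (c) the algebraic Nash-blowup description is most transparent, particularly the identification of the Nash blowup and Nash bundle of a product, which needs a short lemma (the Grassmannian bundle of $r_1+r_2$-planes tangent to $X_1\times X_2$ contains the product of the two Nash blowups as an open dense, and over the smooth locus they agree, hence they agree as the Nash blowup is the closure of the smooth locus). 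Since all three statements are classical — each appears in \cite{macpherson:chern}, \cite{gonzalez-sprinberg}, or \cite{brasselet.schwartz} — I would keep the exposition brief, citing these references for the foundational input and only spelling out the product computation in (c), which is the one place a reader might want the direct-sum/Künneth bookkeeping made explicit.
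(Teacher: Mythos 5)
Your proposal lands in the same place as the paper: the paper's entire proof of this lemma consists of citing \cite[Lemma 2]{macpherson:chern} together with \cite[Prop.~10.1, Cor.~10.2]{brasselet.schwartz} for (a), and \cite[\S 3]{macpherson:chern}, \cite[\S 4.2]{gonzalez-sprinberg} for (b) and (c), which is exactly the citation strategy you end up adopting. The extra sketches you supply are reasonable heuristics (with the one caveat that your argument for (a) via ``$\Eu_X$ depends only on the embedded topology of the germ'' is stronger than what is actually known --- MacPherson's Lemma 2 proves constancy along Whitney strata directly from condition (a) on the obstruction cocycle, not from topological invariance of the germ --- but since you defer to the references for (a) anyway, this does not affect correctness).
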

\begin{proof}
Property (a) follows from \cite[Lemma 2]{macpherson:chern}, see also \cite[Prop. 10.1 and Corollaire 10.2]{brasselet.schwartz}.
The properties (b) and (c) are explicitly stated in \cite[\S 3]{macpherson:chern} and \cite[\S 4.2]{gonzalez-sprinberg}.
\end{proof}

We could not find a precise reference for the Proposition below,
although we believe it to be known to experts.

\begin{proposition}
\label{prop:eulerpb}
Let $f:Z \to X$ be a smooth morphism of nonsingular complex varieties,
and let $Y \subset X$ be a closed subvariety.
Then for any $z \in f^{-1}(Y)$, we have $\Eu_{f^{-1}(Y)}(z) = \Eu_Y(f(z))$,
i.e. as constructible functions $f^* \Eu_Y = \Eu_{f^{-1}(Y)}$.
\end{proposition}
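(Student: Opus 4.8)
The plan is to reduce the statement to a local computation and then exploit the product structure of a smooth morphism. First I would recall the characterization of the local Euler obstruction that is best suited to pull-backs: for a point $y\in Y$ and a generic linear form (or, in the algebraic setting of \cite{gonzalez-sprinberg}, a generic projection), $\Eu_Y(y)$ can be computed from the Nash blow-up $\nu:\widehat Y\to Y$ and the tautological bundle $\widehat T$ on it, as the degree of a certain top Chern-class contribution over $\nu^{-1}(y)$; equivalently one may use Lê--Teissier's formula expressing $\Eu_Y(y)$ via polar multiplicities, or the Gonzalez-Sprinberg--Verdier algebraic definition. Whichever incarnation is used, the key structural fact I would establish is that the construction is compatible with smooth base change: if $f:Z\to X$ is smooth and $Y'=f^{-1}(Y)$, then the Nash blow-up of $Y'$ is the fibre product $\widehat{Y'}\cong \widehat Y\times_Y Y'$, with tautological bundle $\widehat T_{Y'}\cong g^*\widehat T_Y\oplus T_{g}$ where $g:\widehat{Y'}\to\widehat Y$ is the projection and $T_g$ the relative tangent bundle of $Y'\to Y$ (which makes sense over the smooth locus and extends). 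This is because smoothness of $f$ guarantees that the sheaf of Kähler differentials pulls back correctly and that the Grassmann bundle of tangent spaces behaves well under base change.

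The cleanest route, and the one I would actually carry out, is to localize the question completely using \Cref{lemma:euler}(a) and (c). By \Cref{lemma:euler}(a), $\Eu_Y$ is constant on strata of a Whitney stratification, and the analogous statement holds for $\Eu_{Y'}$; since $f$ is smooth, a Whitney stratification of $X$ adapted to $Y$ pulls back to one of $Z$ adapted to $Y'$ (Whitney conditions are preserved under smooth pull-back). So it suffices to compare the two functions at a single point $z$ in each stratum, i.e.\ to check the identity étale-locally at $z$. Étale-locally (or analytically-locally) around $z$, a smooth morphism of relative dimension $d$ looks like the projection $X\times\mathbb{A}^d\to X$ near $(f(z),0)$, so $Y'$ looks like $Y\times\mathbb{A}^d$. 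Now I invoke \Cref{lemma:euler}(c): $\Eu_{Y\times\mathbb{A}^d}(y,0)=\Eu_Y(y)\cdot\Eu_{\mathbb{A}^d}(0)=\Eu_Y(y)\cdot 1=\Eu_Y(y)$, using part (b) for the second factor since $\mathbb{A}^d$ is smooth. Transporting back along the étale map — and using that the local Euler obstruction depends only on the étale-local (or analytic-local) structure of the pair $(Y,y)$, which is exactly \Cref{lemma:euler}(a) applied to the fact that an étale map sends strata to strata and preserves the Whitney conditions — gives $\Eu_{Y'}(z)=\Eu_Y(f(z))$, which is the claim. The final sentence $f^*\Eu_Y=\Eu_{Y'}$ is just the pointwise identity rephrased via the definition $f^*(\varphi)(z)=\varphi(f(z))$.

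The main obstacle, and the step that needs care rather than routine work, is justifying that the local Euler obstruction is genuinely an étale-local (equivalently, analytic-local) invariant of the pair $(Y,y)$, so that the reduction to the model case $X\times\mathbb{A}^d$ is legitimate. This is morally standard — it is implicit in both MacPherson's transcendental definition via the Nash blow-up and a generic local projection, and in the Gonzalez-Sprinberg--Verdier algebraic definition — but it requires one to check that an étale neighbourhood induces an isomorphism of Nash blow-ups over the point and matches up the relevant generic-projection data. I would handle this either by citing the compatibility of the Nash blow-up with smooth (hence étale) base change and the fact that genericity of a linear projection is an open condition that can be arranged after shrinking, or, more robustly, by passing to the analytic topology where $\Eu_Y(y)$ is manifestly defined from an arbitrarily small neighbourhood of $y$ and a local embedding, so that the model computation applies verbatim; the comparison between the Chow-theoretic and homology-theoretic versions is then harmless for flag manifolds as noted in the introduction. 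Alternatively, for readers who prefer to avoid the product formula, one can run the Nash-blow-up argument directly: show $\widehat{Y'}=\widehat Y\times_Y Y'$ with tautological bundle $g^*\widehat T_Y\oplus T_g$, and then the defining integral for $\Eu_{Y'}(z)$ over $\nu'^{-1}(z)\cong\nu^{-1}(f(z))$ differs from that for $\Eu_Y(f(z))$ only by the Euler class of a trivial bundle restricted to a point, hence not at all — but I expect the product-formula route above to be the shortest to write down cleanly.
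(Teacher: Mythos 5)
Your main argument is correct and is essentially the paper's proof: both reduce to the local structure theorem for smooth morphisms (étale-locally $f$ looks like the projection $V\times\mathbb{A}^d\to V$, so $f^{-1}(Y)$ looks analytically-locally like $Y\times\mathbb{A}^d$), then conclude by the product formula in \cref{lemma:euler}(c) together with part (b), using that the local Euler obstruction is an analytic-local invariant. The paper simply cites the Stacks Project for the local factorization through an étale map and asserts the analytic-local nature of $\Eu$ where you elaborate on it; the Nash-blow-up base-change route you sketch is not needed.
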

\begin{proof}
Let $z \in f^{-1}(Y)$ and let $d:=\dim Z - \dim X$.
Since $f:Z \to X$ is a smooth morphism of relative dimension $d$,
\cite[Lemma 29.34.20]{StacksProj} implies that there exists an open affine neighborhood $U$ of $z$,
an open affine neighborhood $V$ of $f(z)$ such that $f(U) \subset V$,
and a commutative diagram
\[
\begin{tikzcd}
Z  \arrow[d,"f"] & U \arrow[l] \arrow[d,swap,"f_{|U}"] \arrow[r,"\eta"] & \mathbb{A}^d_V \arrow[dl] \\ X & V \arrow[l] &
\end{tikzcd}
\]
where $\eta$ is {\'e}tale.
From the definition, the local Euler obstruction only depends on the local behavior in the {\em analytic} topology, and this implies that $\Eu_{f^{-1}(Y)}(z) = \Eu_{f^{-1}(Y \cap V)}(z)$.
From the diagram above it follows that $\eta$ provides a local isomorphism in analytic topology between $f^{-1}(V \cap Y)$ and $(V \cap Y) \times \mathbb{A}^d$.
Then the claim follows from the product formula in (c) and again by using the local behavior and part (b).
\end{proof}

By definition, the Euler obstruction can be written as $\Eu_X = \sum e_i \one_{S_i}$, where $S_i \subset S$ is constructible and $e_i=\Eu_X(x_i)$ for any $x_i \in S_i$.
Then the Mather class and the MacPherson transformation are related by
\begin{equation}
\label{E:MatherEu}
\cMa(X) = c_*(\Eu_X) \/.
\end{equation}
In terms of CSM classes, this can be expressed as
\begin{equation}
\label{E:MaCSM}
\cMa(X) = \sum_i e_i \csm(S_i) \/.
\end{equation}
%\subsection{The Segre-MacPherson Class}
For $\varphi$ a constructible function on $X$, let
\[ s(\varphi) = \frac{c_*(\varphi)}{c(TX)} \]
denote the {\em Segre-MacPherson (SM)} class.
The following Verdier-Riemann-Roch (VRR) type theorem was proved by Yokura \cite{yokura:verdier}.

\begin{theorem}
\label{thm:VRR}
Assume that $f:Z \to X$ is a smooth morphism of complex algebraic varieties.
Then for any constructible function $\varphi \in \mathcal{F}(X)$, $f^* s(\varphi) = s(f^*(\varphi))$.
Equivalently, if $T_f$ denotes the relative tangent bundle of $f$, then
\[
c_*(f^*(\varphi)) = c(T_f) \cap  f^*(c_*(\varphi)) \/,
\]
as elements in $A_*(X)$.
\end{theorem}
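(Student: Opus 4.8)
The plan is to derive the identity purely formally from the two characterizing properties of MacPherson's transformation recalled above: the normalization $c_*(\one_W) = c(TW)\cap[W]$ when $W$ is smooth, and functoriality under proper pushforward. Since all the operations in the statement ($c_*$, $f^*$, capping with $c(T_f)$) are additive in $\varphi$, it suffices to verify the formula on a convenient generating set of $\mathcal F(X)$.

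First I would fix such a set. Using resolution of singularities and Noetherian induction on $\dim X$, I claim $\mathcal F(X)$ is spanned over $\Z$ by the functions $\pi_*\one_{\tilde W}$ with $\pi\colon \tilde W \to X$ proper and $\tilde W$ smooth. It is enough to produce each $\one_V$ for $V\subseteq X$ a closed irreducible subvariety: taking a resolution $\rho\colon\tilde V\to V$ and composing with the closed embedding $V\hookrightarrow X$ (which keeps it proper), the function $\rho_*\one_{\tilde V}-\one_V$ is supported on the proper closed subset of $V$ over which $\rho$ is not an isomorphism, hence lies in the span by induction, and so does $\one_V$. Thus one is reduced to proving $c_*(f^*\pi_*\one_{\tilde W}) = c(T_f)\cap f^*\,c_*(\pi_*\one_{\tilde W})$ for $\pi\colon\tilde W\to X$ proper with $\tilde W$ smooth.

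Next I would pass to the fibre square obtained by pulling $\pi$ back along $f$: set $\tilde W' := \tilde W\times_X Z$, with projections $\pi'\colon\tilde W'\to Z$ (proper, a base change of $\pi$) and $g\colon\tilde W'\to\tilde W$ (smooth, a base change of $f$); in particular $\tilde W'$ is smooth. Two base-change inputs are used. For constructible functions, $f^*\pi_*\one_{\tilde W} = \pi'_*\one_{\tilde W'}$, because the fibre of $\pi'$ over $z$ is literally the fibre of $\pi$ over $f(z)$, so the Euler-characteristic counts agree. For Chow groups, flat pullback commutes with proper pushforward across a fibre square (\cite[Prop.~1.7]{fulton:IT}), i.e. $\pi'_*\,g^* = f^*\,\pi_*$ on $A_*(\tilde W)$. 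Now compute: by functoriality and then the normalization applied on the smooth variety $\tilde W'$,
\[
c_*(f^*\pi_*\one_{\tilde W}) = c_*(\pi'_*\one_{\tilde W'}) = \pi'_*\big(c(T\tilde W')\cap[\tilde W']\big).
\]
Since $g$ is the base change of the smooth map $f$, one has $T_g \cong (\pi')^*T_f$ and an exact sequence $0\to T_g\to T\tilde W'\to g^*T\tilde W\to 0$, so $c(T\tilde W') = (\pi')^*c(T_f)\cdot g^*c(T\tilde W)$. Substituting, using the projection formula to extract $c(T_f)$, and using $g^*\big(c(T\tilde W)\cap[\tilde W]\big) = g^*c(T\tilde W)\cap[\tilde W']$ (flatness of $g$), we get
\[
c_*(f^*\pi_*\one_{\tilde W}) = c(T_f)\cap\pi'_*\,g^*\big(c(T\tilde W)\cap[\tilde W]\big) = c(T_f)\cap f^*\,\pi_*\big(c(T\tilde W)\cap[\tilde W]\big),
\]
and the right-hand factor equals $f^*c_*(\pi_*\one_{\tilde W})$ by functoriality and normalization again. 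Linearity gives the first displayed identity of the theorem for all $\varphi$; the equivalent form $f^*s(\varphi) = s(f^*\varphi)$ follows on dividing by $c(TX)$, using $f^*c(TX)\cdot c(T_f) = c(TZ)$ from $0\to T_f\to TZ\to f^*TX\to 0$.

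Finally, the \emph{main difficulty}. No step is genuinely deep, but the argument hinges on meshing the \emph{two} distinct base-change statements — one for the MacPherson machine (constructible functions, Euler characteristics of fibres) and one for the Chow-theoretic operations — and on invoking the relative-tangent-bundle identity $T_g\cong(\pi')^*T_f$ at the correct spot; a variance or sign slip there propagates through the whole computation. One must also check that the reduction to generators is legitimate: that a resolution composed with a closed embedding stays proper, and that the induction on dimension terminates. If the $T$-equivariant refinement is wanted (as in the applications below), one further needs these facts to survive passage to the Borel mixing-space approximations, where Ohmoto's equivariant $c_*$ and equivariant resolution of singularities are available.
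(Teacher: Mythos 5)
The paper does not prove this statement at all: it is quoted verbatim from Yokura's paper \cite{yokura:verdier}, so there is no internal proof to compare against. Your argument is a correct, self-contained proof along the standard lines (and essentially the one found in the literature): reduce by linearity and resolution of singularities to test functions of the form $\pi_*\one_{\tilde W}$ with $\pi$ proper and $\tilde W$ smooth, then run the two base-change facts — $f^*\pi_*\one_{\tilde W}=\pi'_*\one_{\tilde W'}$ on constructible functions, and $\pi'_*g^*=f^*\pi_*$ on Chow groups for the fibre square — together with the normalization $c_*(\one_{\tilde W'})=c(T\tilde W')\cap[\tilde W']$ on the smooth total space $\tilde W'$ and the splitting $c(T\tilde W')=(\pi')^*c(T_f)\cdot g^*c(T\tilde W)$. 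All the individual steps check out: the dévissage producing the generating set is the usual Noetherian induction, $\tilde W'$ is smooth because it is a smooth scheme over the smooth $\tilde W$, and the projection formula extracts $c(T_f)$ correctly. Two small points worth making explicit if you write this up: the Segre form $f^*s(\varphi)=s(f^*\varphi)$ requires $X$ (hence $Z$) nonsingular so that $c(TX)$ is defined and invertible, whereas your proof of the displayed Chern-class identity needs only $f$ smooth; and the flat pullback $f^*$ on $A_*$ must carry the degree shift by the relative dimension for the identity to be homogeneous. Your closing remark about the equivariant upgrade is also consistent with how the paper uses the result, via Ohmoto's construction on Borel approximations.
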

\Cref{prop:eulerpb} implies that if $f:Z \to X$ is a smooth morphism, then 
$f^*(\Eu_Y)= \Eu_{f^{-1}(Y)}$.
If one takes $\varphi = \Eu_Y$, this implies that in terms of Mather classes
\begin{equation}
\label{E:eulerpb}
\cMa(f^{-1}(Y)) = c(T_f) \cap f^*(\cMa(Y)) \quad \in A_*(Z) \/.
\end{equation}

In \cref{sec:matherpb} below we will give another proof of this result,
in the case of Mather classes of Schubert varieties,
in the case when $f$ the projection between two (generalized) flag manifolds.

As usual, the results from this section can be extended to the case when all varieties have a torus $T$ action,
and all morphisms are $T$-equivariant.
The local Euler obstruction is the same, but one uses an equivariant Whitney stratification, and Ohmoto's equivariant version of MacPherson's transformation $c_*$ \cite{ohmoto:eqcsm}; see also \cite{AMSS:shadows}.

\section{Preliminaries on flag manifolds and cominuscule spaces}\label{sec:preliminaries}

\subsection{Preliminaries}
References for this section are \cite{kumar:book} and \cite{brion:flagv}. Let $G$ be a complex semisimple Lie group and fix a pair of opposite Borel subgroups, $B$ and $ B^-$ in $G$.
The opposite Borel subgroups determine a maximal torus $T:= B \cap B^-$, and a root system $R\subset\operatorname{Hom}(T,\mathbb{C}^*)$.

Let $R= R^+ \sqcup R^-$ be the decomposition into positive and negative roots, and let $\Delta \subset R^+$ be the set of simple roots.
We have a partial order on $R^+$, given by $\alpha < \beta$ if $\beta - \alpha $ is a non-negative combination of positive roots.

The Weyl group $W:=N_G(T)/T$ associated to $(G,T)$ is a Coxeter group generated by the simple reflections $s_i:=s_{\alpha_i}$, for $\alpha_i \in \Delta$.
Denote by $\ell:W \to \mathbb{N}$ the length function and by $w_0$ the longest element.

Any subset $S \subset \Delta$ determines a standard parabolic subgroup $P\supset B$.
We denote by $R_P^+$ the subset of $R^+$ consisting of roots whose support is contained in $S$.
The Weyl group $W_P$ of $P$ is generated by the simple reflections $s_i$, for $\alpha_i \in S$.
Denote by $w_P$ the longest element in $W_P$, and let $W^P$ be the set of minimal length representatives for the cosets in $W/W_P$.
If $w \in W$, the coset $wW_P$ has a unique minimal length representative $w^P \in W^P$ and as usual we set $\ell(wW_P) := \ell(w^P)$.

Let $G/P$ be the generalized flag manifold; this is a projective manifold of dimension $\ell(w_0W_P)$.
If $w \in W^P$ is a minimal length representative, the $B$-orbit $X_w^{P,\circ}= B w P/P$, and the $B^-$-orbit $(X^{P})^{w,\circ}= B^- w P/P$, are opposite Schubert cells for $w$.
With this definition, we have isomorphisms, $X_w^{P,\circ}\simeq \C^{\ell(w)}$ and $X^{w,P,\circ}\simeq \C^{\dim G/P - \ell(w)}$.
The \emph{Schubert varieties} $X_w^P$ and $X^{w,P}$ are the closures of the Schubert cells $X_w^{P,\circ}$ and $X^{w,P,\circ}$ respectively.

Every $P$-representation $V$ determines a $G$-equivariant vector bundle, $G \times^P V \to G/P$.
The points of $G\times^PV$ are equivalence classes $[g,v]$,
for pairs $(g,v) \in G \times V$ such that $(g,v) \simeq (gp^{-1},pv)$,
and the $G$-action on $G\times^PV$ is given by left multiplication, $g.[g',v] := [gg',v]$.
The main examples considered in this note are the following.

If $P=B$ is a Borel subgroup, we will take $V:= \C_\lambda$, the one dimensional $B$-module of character $\lambda$.
The resulting line bundle is $\mathcal{L}_\lambda:=G \times^B \C_\lambda$.

Let $\mathfrak{p}$ and $\mathfrak{g}$ be the Lie algebras of $P$ and $G$ respectively.
The group $P$ acts on $\mathfrak{p}$ and $\mathfrak{g}$ via the adjoint action.
Setting $V:=\mathfrak{g}/\mathfrak{p}$ in the construction above, we obtain the tangent bundle $T(G/P)=G \times^P \mathfrak{g}/\mathfrak{p}$.

Let $U_P$ be the unipotent radical of $P$.
The subspace $\mathfrak{u}_P := \operatorname{Lie}(U_P)$ is stable under the adjoint action of $P$ on $\mathfrak{p}$.
Following \cite{MR0263830}, we have a $P$-module isomorphism,
\[
\mathfrak g/\mathfrak p = \mathfrak{u}_P^*=\bigoplus_{\alpha \ge \alpha_P}\mathfrak{g}_{-\alpha}\/,
\]
where $\mathfrak{g}_\alpha$ is the one dimensional root subspace of $\mathfrak{g}$ corresponding to the root $\alpha$.

\subsection{Cominuscule spaces}
We recall next the basic definitions on cominuscule spaces; see e.g. \cite{BCMP:qkchev}.
A maximal parabolic subgroup of $G$ is determined upto conjugacy by removing a simple root $\alpha_P$ from $\Delta$.
We say that a maximal parabolic subgroup is \emph{cominuscule} if the corresponding simple root $\alpha_P$ appears with coefficient $1$ in the highest root from $R^+$;
the associated flag manifold $G/P$ is called a \emph{cominuscule space}.

%\input{cotcomin_table-1.tex}
%The Dynkin diagrams, along with the possible choices of cominuscule nodes, are listed in \cref{TBL:comin}.
The Dynkin diagrams, along with the possible choices of cominuscule nodes, are listed in \cref{rootPoset}.
The cominuscule spaces are classified as follows:
\begin{itemize}
\item The Grassmann manifolds $\Gr(k,n)$ if $G$ is of type $A$.
\item The Lagrangian Grassmannian $\mathrm{LG}(n,2n)$ in type C - this parametrizes vector subspaces of dimension $n$ in $\C^{2n}$, isotropic with respect to a non-degenerate skew-symmetric quadratic form.
\item The maximal orthogonal Grassmannian $\mathrm{OG}(n,2n)$ in type D - this the connected variety parametrizing vector subspaces of dimension $n$ in $\C^{2n}$, isotropic with respect to a non-degenerate quadratic form.
\item Quadrics in type B and D.
\item There are also two cominuscule spaces in the exceptional types $E_6$ and $E_7$ called respectively the Cayley plane and the Freudenthal variety.
\end{itemize}
The $n^{th}$ node of $B_n$ corresponds to the maximal orthogonal Grassmannian $\mathrm{OG}(n,2n+1)$.
This is also cominuscule, but when regarded as a homogeneous space under the type D group.
Indeed, this space parametrizes maximal subspaces which are isotropic with respect 
to a symmetric non-degenerate form in $\C^{2n+1}$.
From this description it follows that 
$\mathrm{OG}(n,2n+1)$ is isomorphic to either of the connected components of
$\mathrm{OG}(n+1,2n+2)$, and this isomorphism preserves Schubert classes;
see~\cite[p.~68]{fulton.pragacz:schubert} or e.g.~
\cite[\S 3.4]{IMN:factorial} for further details.
Similarly, the space corresponding to the first node in type C is the projective space $\bP^{2n-1}$.
This is a cominuscule space when regarded as a homogeneous space of type A.  

\begin{table}%[h!]
\centering
\captionsetup[subfloat]{labelformat=empty}
\begin{tabular}{  c  c  }
\begin{minipage}{.34\textwidth}

{ \def\k{3} \def\d{4} %This will generate Gr(\k,\k+\d).
\subfloat[][$\Gr(\k,\pgfmathparse{int(\k+\d)}\pgfmathresult)$]
{ %{
%\def\k{3}
%\def\d{4}
%%\def\n{7}
%%This will generate Gr(\k,\k+\d).
%%I can't figure out how to make the code work using \n instead of \k.
\begin{tikzpicture}[scale=.7]
%\draw (6,1) node {$\scriptstyle{\Gr(\k,{\pgfmathparse{int(\k+\d)}\pgfmathresult})}$};
\draw (0.4,-0.3) rectangle (\d+1.1,\k+.8);
\foreach \x in {1,...,\d}
{
\foreach \y in {1,...,\k}
{
  \draw[thick,fill=white!70] (\x,\y) circle (.25cm);
  \draw (\x,\y) node{$\scriptstyle{\pgfmathparse{int(\x+\y-1)}\pgfmathresult}$};
}
}
\foreach \x in {2,...,\d}
{
  \foreach \y in {1,...,\k}
  {
    \draw[xshift=7,thick] (\x-1, \y) -- +(.5,0);
  }
}

\foreach \x in {1,...,\d}
{
  \foreach \y in {2,...,\k}
  {
    \draw[yshift=7,thick] (\x, \y-1) -- +(0,.5);
  }
}

\foreach \y in {2,\k}
{
  \draw (\d+.5 , \y-0.5) node{$\scriptstyle{+\alpha_{\pgfmathparse{int(\y-1)}\pgfmathresult}}$};
}

\foreach \x in {2,...,\d}
{
  \draw (\x-.5 , \k+.5) node{$\scriptstyle{+\alpha_{\pgfmathparse{int(\x+\d-2)}\pgfmathresult}}$};
}
%\end{tikzpicture}
%\endinput

%%%%%Dynkin diagram starts here
\def\xl{.8}
\def\yl{.3}
\def\xw{.7}
\def\n{7}

\foreach \x in {1,...,\k} 
{
 \draw (\xw*\x-\xw+\xl , \yl ) circle (.1cm);
 \draw (\xw*\x-1*\xw+\xl , \yl-.5*\xw ) node {$\scriptscriptstyle{\pgfmathparse{int(\x)}\pgfmathresult}$};
 \draw[xshift=3] (\xw*\x-\xw+\xl , \yl) --+(\xw-.2,0);
}

\foreach \x in {2,...,\d} 
{
 \draw (\xw*\x-2*\xw+\k*\xw+\xl , \yl ) circle (.1cm);
 \draw (\xw*\x-2*\xw+\k*\xw+\xl , \yl-.5*\xw ) node {$\scriptscriptstyle{\pgfmathparse{int(\k+\x-1)}\pgfmathresult}$};
 \draw[xshift=3] (\xw*\x+\k*\xw-3*\xw+\xl , \yl) --+(\xw-.2,0);
}
\draw[thick,fill=black!80] (\xl+\k*\xw-\xw,\yl) circle (.1cm);

%%%%%Dynkin diagram ends here

\end{tikzpicture}
%} }\hfill }

{\def\n{4} %change to \def\n{5} to generate the diagram of C_5
\subfloat[][$\LG(\n,\pgfmathparse{int(2*\n)}\pgfmathresult)$]
{ %{\def\n{4}
\begin{tikzpicture}[scale=.7]
\draw (-\n-.5,\n+.75) rectangle (.1,.6);
\foreach \x in {1,...,\n}
{
  \foreach \y in {\x,...,\n}
  {
    \draw[thick,fill=white!70] (-\x,\y) circle (.25cm);
    \draw (-\x,\y) node{$\scriptstyle{\pgfmathparse{int(\n+\x-\y)}\pgfmathresult}$};
  }
}
\foreach \x in {2,...,\n}
{
  \draw (-\x+0.5,\n+0.5) node{$\scriptstyle{+\alpha_{\pgfmathparse{int(\x-1)}\pgfmathresult}}$};
  \draw (-.5,\x-0.5) node{$\scriptstyle{+\alpha_{\pgfmathparse{int(\n-\x+1)}\pgfmathresult}}$};
  \foreach \y in {\x,...,\n}
  {
    \draw[xshift=7,thick] (-\x, \y) -- +(.5,0);
    \draw[yshift=-7,thick] (1-\x, \y) -- +(0,-.5);
  }
}

%%%%%Dynkin diagram starts here
\def\xl{-4.2}
\def\yl{1.3}
\def\xw{.7}

\foreach \x in {1,...,\n} 
{
 \draw (\xw*\x-\xw+\xl , \yl ) circle (.1cm);
 \draw (\xw*\x-1*\xw+\xl , \yl-.5*\xw ) node {$\scriptscriptstyle{\pgfmathparse{int(\x)}\pgfmathresult}$};
}
\foreach \x in {3,...,\n} \draw[xshift=3] (\xw*\x-3*\xw+\xl , \yl) --+(\xw-.2,0);

\draw[xshift=3,yshift=1] (\xw*\n-2*\xw+\xl , \yl) --+(\xw-.2,0);
\draw[xshift=3,yshift=-1] (\xw*\n-2*\xw+\xl , \yl) --+(\xw-.2,0);
\draw (\xw*\n-1.55*\xw+\xl , \yl) --+(.2*\xw,.2*\xw);
\draw (\xw*\n-1.55*\xw+\xl , \yl) --+(.2*\xw,-.2*\xw);
\draw[fill=black!80] (\xl+\n*\xw-\xw , \yl ) circle (.1cm);
%%%%%Dynkin diagram ends here

\end{tikzpicture}
%} }  \hfill }

\subfloat[][$\OG(6,12)$]
{ \begin{tikzpicture}[scale=.65]
\draw (-4.6,4.9) rectangle (1.1,-.4);
\foreach \n in {4}
{
  \foreach \x in {0,...,\n}
  {
    \foreach \y in {\x,...,\n}
      \draw[thick,fill=white!70] (-\x,\y) circle (.25cm);
  }
  \foreach \x in {1,...,\n}
  {
    \foreach \y in {\x,...,\n}
    {
      \draw[xshift=7,thick] (-\x, \y) -- +(.5,0);
%      \draw[thick] (.45-\x, \y) -- +(.1,.1);
%      \draw[thick] (.45-\x, \y) -- +(.1,-.1);
      \draw[yshift=-7,thick] (1-\x, \y) -- +(0,-.5);
%      \draw[thick] (1-\x, \y-.45) -- +(.1,-.1);
%      \draw[thick] (1-\x, \y-.45) -- +(-.1,-.1);
    }
  }
  \draw (-3.5 , 4.5) node{$\scriptstyle{+\alpha_4}$};
  \draw (-2.5 , 4.5) node{$\scriptstyle{+\alpha_3}$};
  \draw (-1.5 , 4.5) node{$\scriptstyle{+\alpha_2}$};
  \draw (-.5  , 4.5) node{$\scriptstyle{+\alpha_1}$};
  \draw (.5   , 3.5) node{$\scriptstyle{+\alpha_5}$};
  \draw (.5   , 2.5) node{$\scriptstyle{+\alpha_4}$};
  \draw (.5   , 1.5) node{$\scriptstyle{+\alpha_3}$};
  \draw (.5   , 0.5) node{$\scriptstyle{+\alpha_2}$};
}
\draw[thick,fill=gray!70] (-4,4) node {$\scriptstyle{6}$};
\draw[thick,fill=gray!70] (-3,4) node {$\scriptstyle{4}$};
\draw[thick,fill=gray!70] (-2,4) node {$\scriptstyle{3}$};
\draw[thick,fill=gray!70] (-1,4) node {$\scriptstyle{2}$};
\draw[thick,fill=gray!70] (0,4) node {$\scriptstyle{1}$};
\draw[thick,fill=gray!70] (-3,3) node {$\scriptstyle{5}$};
\draw[thick,fill=red!90] (-2,3) node {$\scriptstyle{4}$};
\draw[thick,fill=white!70] (-1,3) node {$\scriptstyle{3}$};
\draw[thick,fill=white!70] (0,3) node {$\scriptstyle{2}$};
\draw[thick,fill=gray!70] (-2,2) node {$\scriptstyle{6}$};
\draw[thick,fill=gray!70] (-1,2) node {$\scriptstyle{4}$};
\draw[thick,fill=white!70] (0,2) node {$\scriptstyle{3}$};
\draw[thick,fill=white!70] (-1,1) node {$\scriptstyle{5}$};
\draw[thick,fill=white!70] (0,1) node {$\scriptstyle{4}$};
\draw[thick,fill=white!70] (0,0) node {$\scriptstyle{6}$};

%\end{tikzpicture}
%\endinput
%%%%%Dynkin diagram starts here
\def\n{6}
\def\xl{-4.3}
\def\yl{0.6}
\def\xw{.7}

\foreach \x in {4,...,\n} 
{
 \draw (\xw*\x-3*\xw+\xl , \yl ) circle (.1cm);
 \draw[xshift=3] (\xw*\x-4*\xw+\xl , \yl) --+(\xw-.2,0);
 \draw (\xw*\x-3*\xw+\xl , \yl-.5*\xw ) node {$\scriptscriptstyle{\pgfmathparse{int(\x-2)}\pgfmathresult}$};
}
\draw (\xl , \yl ) circle (.1cm);
\draw (\xl , \yl-.5*\xw ) node {$\scriptscriptstyle1$};
\draw (\xl+\n*\xw-2*\xw , \yl+\xw ) node {$\scriptscriptstyle{\pgfmathparse{int(\n-1)}\pgfmathresult}$};
\draw (\xl+\n*\xw-2*\xw , \yl-\xw ) node {$\scriptscriptstyle{\pgfmathparse{int(\n)}\pgfmathresult}$};
\draw[yshift=2,xshift=2] (\xl+\n*\xw-3*\xw , \yl) --+(\xw*.53,\xw*.53);
\draw[yshift=-2,xshift=2] (\xl+\n*\xw-3*\xw , \yl) --+(\xw*.53,-\xw*.53);
\draw (\xl+\n*\xw-2.27*\xw , \yl+.73*\xw) circle (.1cm);
\draw[fill=black!80] (\xl+\n*\xw-2.27*\xw , \yl-.73*\xw) circle (.1cm);
%%%%%Dynkin diagram ends here

\end{tikzpicture} }  \hfill

{ \def\n{5}
\subfloat[][Even Quadric $Q^{\pgfmathparse{int(2*\n-2)}\pgfmathresult}$]
{ %{
%%\def\n{5} Must be >=4
\begin{tikzpicture}[scale=.65]
\draw (1-\n,1.8) rectangle (\n-2.6,-1.35);
\foreach \x in {2,...,\n}
{
  \draw[thick,fill=white!70] (\x-3,0) circle (.25cm);
  \draw[thick,fill=white!70] (2-\x,1) circle (.25cm);
  \draw (2-\x,1) node{$\scriptstyle{\pgfmathparse{int(\n-\x+1)}\pgfmathresult}$};
}
\foreach \x in {4,...,\n} \draw (2.5-\x , 1.5) node{$\scriptstyle{+\alpha_{\pgfmathparse{int(\n-\x+2)}\pgfmathresult}}$};
\foreach \x in {3,...,\n}
{
  \draw[xshift=7,thick] (\x-4, 0) -- +(.5,0);
  \draw[xshift=7,thick] (2-\x, 1) -- +(.5,0);
%  \draw (2.5-\x , 1.5) node{$\scriptstyle{+\alpha_{\pgfmathparse{int(\n-\x+2)}\pgfmathresult}}$};
  \draw (\x-3.5 , 1.5) node{$\scriptstyle{+\alpha_{\pgfmathparse{int(\n-\x+2)}\pgfmathresult}}$};
  \draw (\x-3,0) node{$\scriptstyle{{\pgfmathparse{int(\n-\x+1)}\pgfmathresult}}$};
}
\draw[yshift=7,thick] (-1, 0) -- +(0,.5);
\draw[yshift=7,thick] (0 , 0) -- +(0,.5);
\draw (.5 , 0.5) node{$\scriptstyle{+\alpha_\n}$};
\draw (-1 , 0) node{$\scriptstyle{\n}$};

%%%%%Dynkin diagram starts here
\def\xl{-3.7}
\def\yl{-.4}
\def\xw{.7}

\foreach \x in {4,...,\n} 
{
 \draw (\xw*\x-3*\xw+\xl , \yl ) circle (.1cm);
 \draw[xshift=3] (\xw*\x-4*\xw+\xl , \yl) --+(\xw-.2,0);
 \draw (\xw*\x-3*\xw+\xl , \yl-.5*\xw ) node {$\scriptscriptstyle{\pgfmathparse{int(\x-2)}\pgfmathresult}$};
}
\draw[fill=black!80] (\xl , \yl ) circle (.1cm);
\draw (\xl , \yl-.5*\xw ) node {$\scriptscriptstyle1$};
\draw (\xl+\n*\xw-2*\xw , \yl+\xw ) node {$\scriptscriptstyle{\pgfmathparse{int(\n-1)}\pgfmathresult}$};
\draw (\xl+\n*\xw-2*\xw , \yl-\xw ) node {$\scriptscriptstyle{\pgfmathparse{int(\n)}\pgfmathresult}$};
\draw[yshift=2.2,xshift=2] (\xl+\n*\xw-3*\xw , \yl) --+(\xw*.53,\xw*.53);
\draw[yshift=-2.2,xshift=2] (\xl+\n*\xw-3*\xw , \yl) --+(\xw*.53,-\xw*.53);
\draw (\xl+\n*\xw-2.27*\xw , \yl+.73*\xw) circle (.1cm);
\draw (\xl+\n*\xw-2.27*\xw , \yl-.73*\xw) circle (.1cm);
%%%%%Dynkin diagram ends here

\end{tikzpicture}
%} }\hfill }

\end{minipage}
&
\begin{minipage}{.55\textwidth}

{ \def\n{5}
\subfloat[][Odd Quadric $Q^{\pgfmathparse{int(2*\n-1)}\pgfmathresult}$]
{ %{
%%\def\n{5}
\begin{tikzpicture}[scale=.7]
\draw (.5-\n,2) rectangle (\n-.5,-0.5);
\foreach \x in {1,...,\n}
{
  \draw[thick,fill=white!70] (\x-1,1) circle (.25cm);
  \draw[thick,fill=white!70] (1-\x,1) circle (.25cm);
  \draw (1-\x,1) node{$\scriptstyle{\pgfmathparse{int(\n-\x+1)}\pgfmathresult}$};
  \draw (\x-1,1) node{$\scriptstyle{\pgfmathparse{int(\n-\x+1)}\pgfmathresult}$};
}
\foreach \x in {2,...,\n}
{
  \draw[xshift=7,thick] (\x-2, 1) -- +(.5,0);
  \draw[xshift=7,thick] (1-\x, 1) -- +(.5,0);
%  \draw (2.5-\x , 1.5) node{$+\alpha_{\pgfmathparse{int(\n-\x+2)}\pgfmathresult}$};
  \draw (-\x+1.5 , 1.5) node{$\scriptstyle{+\alpha_{\pgfmathparse{int(\n-\x+2)}\pgfmathresult}}$};
  \draw (\x-1.5 , 1.5) node{$\scriptstyle{+\alpha_{\pgfmathparse{int(\n-\x+2)}\pgfmathresult}}$};
}

%%%%%Dynkin diagram starts here
\def\xl{-1.45}
\def\yl{0.15}
\def\xw{.7}

\foreach \x in {1,...,\n} 
{
 \draw (\xw*\x-\xw+\xl , \yl ) circle (.1cm);
 \draw (\xw*\x-1*\xw+\xl , \yl-.5*\xw ) node {$\scriptscriptstyle{\pgfmathparse{int(\x)}\pgfmathresult}$};
}
\foreach \x in {3,...,\n} \draw[xshift=3] (\xw*\x-3*\xw+\xl , \yl) --+(\xw-.2,0);

\draw[xshift=3,yshift=1] (\xw*\n-2*\xw+\xl , \yl) --+(\xw-.2,0);
\draw[xshift=3,yshift=-1] (\xw*\n-2*\xw+\xl , \yl) --+(\xw-.2,0);
\draw (\xw*\n-1.4*\xw+\xl , \yl) --+(-.2*\xw,.2*\xw);
\draw (\xw*\n-1.4*\xw+\xl , \yl) --+(-.2*\xw,-.2*\xw);
\draw[fill=black!80] (\xl , \yl ) circle (.1cm);
%%%%%Dynkin diagram ends here

\end{tikzpicture}
%} }\hfill 
}

\subfloat[][$E_6/P_6$]
{ \begin{tikzpicture}[scale=.7]
%\draw (-4.9,3.8) node[anchor=east]  {$\alpha_6$};

\draw (-6,5) rectangle (3,.5);
\foreach \x in {-2,...,1}
  \draw[xshift=7,thick] (\x, 1) -- +(.5,0);
    
\foreach \x in {1,...,4}
{
  \foreach \y in {\x,...,4}
  {
    \draw[thick,fill=white!20] (-\x,\y) circle (.25cm);
  }
}
\foreach \x in {2,...,4}
{
  \foreach \y in {\x,...,4}
  {
    \draw[xshift=7,thick] (-\x, \y) -- +(.5,0);
    \draw[yshift=-7,thick] (1-\x, \y) -- +(0,-.5);
  }
}
\draw (-3.5 , 4.5) node{$\scriptstyle{+\alpha_4}$};
\draw (-4.5 , 4.5) node{$\scriptstyle{+\alpha_5}$};
\draw (-2.5 , 4.5) node{$\scriptstyle{+\alpha_3}$};
\draw (-1.5 , 4.5) node{$\scriptstyle{+\alpha_1}$};
\draw (-.5  , 4.5) node{$\scriptstyle{+\alpha_3}$};
\draw (0.5  , 4.5) node{$\scriptstyle{+\alpha_4}$};
\draw (1.5  , 4.5) node{$\scriptstyle{+\alpha_2}$};
\draw (2.4  , 3.6) node{$\scriptstyle{+\alpha_2}$};
\draw (2.4  , 2.6) node{$\scriptstyle{+\alpha_4}$};
\draw (2.4  , 1.6) node{$\scriptstyle{+\alpha_5}$};

\draw[thick,fill=green] (-5,4) circle (.25cm);
\draw[thick,fill=green] (-4,4) circle (.25cm);
\draw[thick,fill=green] (-3,4) circle (.25cm);
\draw[thick,fill=green] (-2,4) circle (.25cm);
\draw[thick,fill=green] (-1,4) circle (.25cm);
\draw[thick,fill=green] (-3,3) circle (.25cm);
\draw[thick,fill=green] (-2,3) circle (.25cm);
\draw[thick,fill=green] (-2,2) circle (.25cm);
\draw[thick,fill=white!20] (0,2) circle (.25cm);
\draw[thick,fill=white!20] (0,1) circle (.25cm);
\draw[thick,fill=white!20] (1,1) circle (.25cm);
\draw[thick,fill=white!20] (2,1) circle (.25cm);
\draw[thick,fill=white!20] (-2,1) circle (.25cm);

\draw[thick,fill=green] (-5,4) node {$\scriptstyle6$};
\draw[thick,fill=green] (-4,4) node {$\scriptstyle5$};
\draw[thick,fill=green] (-3,4) node {$\scriptstyle4$};
\draw[thick,fill=green] (-2,4) node {$\scriptstyle3$};
\draw[thick,fill=green] (-1,4) node {$\scriptstyle1$};
\draw[thick,fill=green] (-3,3) node {$\scriptstyle2$};
\draw[thick,fill=red!90] (-2,3) node {$\scriptstyle4$};
\draw[thick,fill=white!20] (-1,3) node {$\scriptstyle3$};
\draw[thick,fill=green] (-2,2) node {$\scriptstyle5$};
\draw[thick,fill=green] (-1,2) node {$\scriptstyle4$};
\draw[thick,fill=white!20] (0,2) node {$\scriptstyle2$};
\draw[thick,fill=white!20] (-2,1) node {$\scriptstyle6$};
\draw[thick,fill=white!20] (-1,1) node {$\scriptstyle5$};
\draw[thick,fill=white!20] (0,1) node {$\scriptstyle4$};
\draw[thick,fill=white!20] (1,1) node {$\scriptstyle3$};
\draw[thick,fill=white!20] (2,1) node {$\scriptstyle1$};

\draw[xshift=7  , thick] (-5 , 4) -- +(.5 , 0);
\draw[xshift=7  , thick] (-1 , 2) -- +(.5 , 0);
\draw[yshift=-7 , thick] (0  , 2) -- +(0  , -.5);
\draw[yshift=-7 , thick] (-2 , 2) -- +(0  , -.5);
\draw[xshift=7  , thick] (-1 , 1) -- +(.5 , 0);

%%%%%Dynkin diagram starts here
\def\xl{-5.5}
\def\yl{1.5}
\def\xw{.7}

\foreach \x in {0,...,3}
{
 \draw (\xw*\x+\xl , \yl ) circle (.1cm);
 \draw[xshift=3] (\xw*\x+\xl , \yl) --+(\xw-.2,0);
 \draw (\xw*\x+\xw+\xl , \yl-.5*\xw ) node {$\scriptscriptstyle{\pgfmathparse{int(\x+3)}\pgfmathresult}$};
}
\draw (\xl , \yl-.5*\xw ) node {$\scriptscriptstyle1$};
\draw (\xl+2.5*\xw , \yl+\xw ) node {$\scriptscriptstyle2$};
\draw[yshift=3] (\xl+2*\xw , \yl) --+(0,\xw-.2);
\draw[thick,fill=black!80] (\xl+4*\xw , \yl) circle (.1cm);
\draw (\xl+2*\xw , \yl+\xw) circle (.1cm);
%%%%%Dynkin diagram ends here

\end{tikzpicture} }  \hfill

\subfloat[][$E_7/P_7$]
{ \begin{tikzpicture}[scale=.68]
%\draw (-4.9,3.8) node[anchor=east]  {$\alpha_6$};

\draw (-6.35,5) rectangle (2.95,-4.5);
\foreach \x in {-2,...,1}
  \draw[xshift=7,thick] (\x, 1) -- +(.5,0);
    
\foreach \x in {1,...,4}
{
  \foreach \y in {\x,...,4}
  {
    \draw[thick,fill=white!20] (-\x,\y) circle (.25cm);
  }
}
\foreach \x in {2,...,4}
{
  \foreach \y in {\x,...,4}
  {
    \draw[xshift=7,thick] (-\x, \y) -- +(.5,0);
    \draw[yshift=-7,thick] (1-\x, \y) -- +(0,-.5);
  }
}
\draw (-5.5 , 4.5) node{$\scriptstyle{+\alpha_6}$};
\draw (-4.5 , 4.5) node{$\scriptstyle{+\alpha_5}$};
\draw (-3.5 , 4.5) node{$\scriptstyle{+\alpha_4}$};
\draw (-2.5 , 4.5) node{$\scriptstyle{+\alpha_3}$};
\draw (-1.5 , 4.5) node{$\scriptstyle{+\alpha_1}$};
\draw (-.5  , 4.5) node{$\scriptstyle{+\alpha_3}$};
\draw (0.5  , 4.5) node{$\scriptstyle{+\alpha_4}$};
\draw (1.5  , 4.5) node{$\scriptstyle{+\alpha_2}$};
\draw (2.5  , 3.5) node{$\scriptstyle{+\alpha_2}$};
\draw (2.5  , 2.5) node{$\scriptstyle{+\alpha_4}$};
\draw (2.5  , 1.5) node{$\scriptstyle{+\alpha_5}$};
\draw (2.5  , 0.5) node{$\scriptstyle{+\alpha_6}$};
\draw (2.5  ,-0.5) node{$\scriptstyle{+\alpha_5}$};
\draw (2.5  ,-1.5) node{$\scriptstyle{+\alpha_4}$};
\draw (2.5  ,-2.5) node{$\scriptstyle{+\alpha_3}$};
\draw (2.5  ,-3.5) node{$\scriptstyle{+\alpha_1}$};

\foreach \y in {0,...,3}
{
  \draw[yshift=-7 , thick] (2  , -\y) -- +(0 , -.5);
  \draw[xshift=7  , thick] (\y-2 , 0) -- +(.5  , 0);
  \draw[thick , fill=white!20] (2  ,\y-4) circle (.25cm);
}

\foreach \x in {-6,...,-1}
  \draw[thick , fill=white!20] (\x , 4) circle (.25cm);

\foreach \y in {-2,...,2}
{
  \draw[yshift=-7 , thick] (\y , 1) -- +(0 , -.5);
  \draw[thick , fill=white!20] (\y , 0) circle (.25cm);
}
\draw[thick , fill=white!20] (-3 , 3) circle (.25cm);
\draw[thick , fill=white!20] (-2 , 3) circle (.25cm);
\draw[thick , fill=white!20] (-2 , 2) circle (.25cm);
\draw[thick , fill=white!20] (0  , 2) circle (.25cm);
\draw[thick , fill=white!20] (0  , 1) circle (.25cm);
\draw[thick , fill=white!20] (1  , 1) circle (.25cm);
\draw[thick , fill=white!20] (2  , 1) circle (.25cm);
\draw[thick , fill=white!20] (-2 , 1) circle (.25cm);
\draw[thick , fill=white!20] (1  ,-1) circle (.25cm);

\draw[thick , fill=white!20] (-6 , 4) node {$\scriptstyle7$};
\draw[thick , fill=white!20] (-5 , 4) node {$\scriptstyle6$};
\draw[thick , fill=white!20] (-4 , 4) node {$\scriptstyle5$};
\draw[thick , fill=white!20] (-3 , 4) node {$\scriptstyle4$};
\draw[thick , fill=white!20] (-2 , 4) node {$\scriptstyle3$};
\draw[thick , fill=white!20] (-1 , 4) node {$\scriptstyle1$};
\draw[thick , fill=white!20] (-3 , 3) node {$\scriptstyle2$};
\draw[thick , fill=white!20] (-2 , 3) node {$\scriptstyle4$};
\draw[thick , fill=white!20] (-1 , 3) node {$\scriptstyle3$};
\draw[thick , fill=white!20] (-2 , 2) node {$\scriptstyle5$};
\draw[thick , fill=white!20] (-1 , 2) node {$\scriptstyle4$};
\draw[thick , fill=white!20] (0  , 2) node {$\scriptstyle2$};
\draw[thick , fill=white!20] (-2 , 1) node {$\scriptstyle6$};
\draw[thick , fill=white!20] (-1 , 1) node {$\scriptstyle5$};
\draw[thick , fill=white!20] (0  , 1) node {$\scriptstyle4$};
\draw[thick , fill=white!20] (1  , 1) node {$\scriptstyle3$};
\draw[thick , fill=white!20] (2  , 1) node {$\scriptstyle1$};
\draw[thick , fill=white!20] (-2 , 0) node {$\scriptstyle7$};
\draw[thick , fill=white!20] (-1 , 0) node {$\scriptstyle6$};
\draw[thick , fill=white!20] (0  , 0) node {$\scriptstyle5$};
\draw[thick , fill=white!20] (1  , 0) node {$\scriptstyle4$};
\draw[thick , fill=white!20] (2  , 0) node {$\scriptstyle3$};
\draw[thick , fill=white!20] (1  ,-1) node {$\scriptstyle2$};
\draw[thick , fill=white!20] (2  ,-1) node {$\scriptstyle4$};
\draw[thick , fill=white!20] (2  ,-2) node {$\scriptstyle5$};
\draw[thick , fill=white!20] (2  ,-3) node {$\scriptstyle6$};
\draw[thick , fill=white!20] (2  ,-4) node {$\scriptstyle7$};

\draw[xshift=7  , thick] (-6 , 4) -- +(.5  , 0);
\draw[xshift=7  , thick] (-5 , 4) -- +(.5  , 0);
\draw[xshift=7  , thick] (-1 , 2) -- +(.5  , 0);
\draw[xshift=7  , thick] (-1 , 1) -- +(.5  , 0);
\draw[xshift=7  , thick] (1  , -1) -- +(.5 , 0);

\draw[yshift=-7 , thick] (0  , 2) -- +(0 , -.5);
\draw[yshift=-7 , thick] (-2 , 2) -- +(0 , -.5);
\draw[yshift=-7 , thick] (1  , 0) -- +(0 , -.5);

%%%%%Dynkin diagram starts here
\def\xl{-5}
\def\yl{-2.5}
\def\xw{.7}

\foreach \x in {0,...,4} 
{
 \draw (\xw*\x+\xl , \yl ) circle (.1cm);
 \draw[xshift=3] (\xw*\x+\xl , \yl) --+(\xw-.2,0);
 \draw (\xw*\x+\xw+\xl , \yl-.5*\xw ) node {$\scriptscriptstyle{\pgfmathparse{int(\x+3)}\pgfmathresult}$};
}
\draw (\xl , \yl-.5*\xw ) node {$\scriptscriptstyle1$};
\draw (\xl+2.5*\xw , \yl+\xw ) node {$\scriptscriptstyle2$};
\draw[yshift=3] (\xl+2*\xw , \yl) --+(0,\xw-.2);
\draw[thick,fill=black!80] (\xl+5*\xw , \yl) circle (.1cm);
\draw (\xl+2*\xw , \yl+\xw) circle (.1cm);
%%%%%Dynkin diagram ends here

\end{tikzpicture} }  \hfill

\end{minipage}

\end{tabular}
\caption{
Hasse diagrams for the lattices \rp. %\ for $E_6/P_6$.
The top left node is the minimal root $\alpha_P$, and the bottom right node is the highest root.
See also \cref{sec:tables}.
}
\label{rootPoset}
\end{table}

\section{A resolution of singularities for the conormal spaces of cominuscule Schubert varieties}
\label{sec:resolution}
By a cominuscule Schubert variety, we simply mean a Schubert subvariety of a cominuscule space.
We recall here the construction from \cite{singh:conormalII} of a resolution of singularities of the conormal space of a cominuscule Schubert variety.
This is the main tool required to calculate (equivariant) Mather classes of these Schubert varieties.

Fix a minimal length representative $w \in W^P$, and let $\underline w=(s_{i_1},\cdots,s_{i_k})$ be a reduced word for $w$.
Consider the vector subspace
\begin{align*}
\u_w=\bigoplus\limits_{\substack{\alpha\geq\alpha_P\\ w(\alpha)>0}}\mathfrak g_\alpha \subset \u_P \/.
\end{align*}
A key fact proved in \cite[Lemma 2.1]{singh:conormalII} is that $\u_w$ is a $B$-submodule of $\u_P$ under the adjoint action.
To this data we can associate the Bott-Samelson variety $B_{\underline{w}}:= P_{i_1}\times^B P_{i_2}\times^B \ldots \times^B P_{i_k}/B$, a $B$-variety under left multiplication, and the vector bundle
\[
\mathcal{E}_{\underline{w}}:= P_{i_1}\times^B P_{i_2}\times^B \ldots \times^B P_{i_k}\times^B \mathfrak{u}_w \to B_{\underline{w}}\/,
\]
see, for example, \cite[Ch.2]{brion.kumar:frobenius}.
We have a commutative diagram
\begin{equation}
\label{E:diag}
\begin{tikzcd}
\mathcal E_{\underline{w}}\arrow[r,"\theta_{\underline{w}}'"] \arrow[d] & \overline{BwB}\times^B \mathfrak{u}_w \arrow[r, "\pi'"] \arrow[d] & T^*_{X_w^P}(G/P) \arrow[r,hook] \arrow[d] & T^*(G/P)\arrow[d] \\
B_{\underline{w}}\arrow[r,"\theta_{\underline{w}}"]                     & X_w^B \arrow[r,"\pi"]                                             & X_w^P \arrow[r,hook]                      & G/P,
\end{tikzcd}
\end{equation}
where the maps are defined as follows:
\begin{itemize}
\item the morphism $\theta_{\underline{w}}$ is birational, and it is the usual projection from the Bott-Samelson desingularization of the Schubert variety $X_w^B$;
the middle vertical map is the vector bundle projection;
\item the left square is a fibre square, inducing the morphisms $\theta_{\underline{w}}'$ and the left vertical map (also a vector bundle projection);
\item the rightmost vertical morphism is the usual projection from the cotangent bundle of $G/P$;
\item $\pi$ is the restriction to $X_w^B$ of the usual projection $G/B \to G/P$;
\item $\pi'$ is obtained by the composition of the morphisms
\begin{align*}
&&\overline{BwB}\times^B \mathfrak u_w \hookrightarrow \overline{BwB}\times^B \mathfrak u_P \hookrightarrow G \times^B \mathfrak u_P \to G \times^P \mathfrak u_P = T^*(G/P) \/.
\end{align*}
Each of these morphisms is proper, hence the composition is also proper.
It was proved in \cite[Thm.  A]{singh:conormalII} that $\pi'$ is birational.
\end{itemize}

Combining everything proves that the following holds \cite{singh:conormalII}:
\begin{theorem}
\label{thm:rescon}
The composition,
\[
\pi' \circ \theta_{\underline{w}}':\mathcal{E}_w \to T^*_{X_w^P}(G/P),
\]
is proper and birational,
hence a resolution of singularities of the conormal space $T^*_{X_w^P}(G/P)$.
\end{theorem}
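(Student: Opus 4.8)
The plan is to verify that the two horizontal maps in the top row of \eqref{E:diag} are each proper and birational and then to compose them. First I would note that the source $\mathcal{E}_{\underline w}$ is nonsingular, being the total space of the vector bundle $\mathcal{E}_{\underline w}\to B_{\underline w}$ over the smooth Bott--Samelson variety $B_{\underline w}$; since a proper birational morphism from a smooth variety is by definition a resolution of singularities, it suffices to prove that $\pi'\circ\theta_{\underline w}'$ is proper and birational. (All three varieties $\mathcal{E}_{\underline w}$, $\overline{BwB}\times^B\mathfrak u_w$, $T^*_{X_w^P}(G/P)$ are irreducible, as vector bundles over irreducible bases resp.\ the closure of a conormal bundle, so birationality makes sense throughout.)

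For properness: the left square of \eqref{E:diag} is a fibre square, so $\theta_{\underline w}'$ is the base change of the proper morphism $\theta_{\underline w}$ along the vector bundle projection $\overline{BwB}\times^B\mathfrak u_w\to X_w^B$, and is therefore proper; and $\pi'$ is proper as a composition of proper morphisms, as recorded in the construction. Hence $\pi'\circ\theta_{\underline w}'$ is proper.

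For birationality: since $\theta_{\underline w}$ is birational there is a dense open $U\subset X_w^B$ over which it is an isomorphism, and because the left square is a fibre square, $\theta_{\underline w}'$ restricts to an isomorphism $\mathcal{E}_{\underline w}|_{\theta_{\underline w}^{-1}(U)}\iso(\overline{BwB}\times^B\mathfrak u_w)|_U$ onto a dense open set; hence $\theta_{\underline w}'$ is birational. The morphism $\pi'$ is birational by \cite[Thm.~A]{singh:conormalII}: since $w\in W^P$, the projection $\pi$ restricts to an isomorphism $BwB/B\iso BwP/P$ on open cells, and under the identification $\mathfrak g/\mathfrak p\cong\mathfrak u_P^*$ the fibre $\mathfrak u_w$ is precisely the conormal fibre of $X_w^P$ over a point of $X_w^{P,\circ}$, so $\pi'$ carries a dense open of $\overline{BwB}\times^B\mathfrak u_w$ isomorphically onto the conormal bundle over $X_w^{P,\circ}$, which is dense in $T^*_{X_w^P}(G/P)$. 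Since a composition of birational morphisms of irreducible varieties is birational, $\pi'\circ\theta_{\underline w}'$ is birational. Combining the two items, $\pi'\circ\theta_{\underline w}'$ is a proper birational morphism from the smooth variety $\mathcal{E}_{\underline w}$ onto $T^*_{X_w^P}(G/P)$, that is, a resolution of singularities.

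The main obstacle is the birationality of $\pi'$: the rest is formal, but identifying $\mathfrak u_w$ with the generic conormal fibre, and indeed even forming $\overline{BwB}\times^B\mathfrak u_w$ at all (which requires $\mathfrak u_w$ to be a $B$-submodule of $\mathfrak u_P$), relies on the cominuscule hypothesis --- equivalently, on the nilradical $\mathfrak u_P$ being abelian --- and this is the content of \cite[Lemma~2.1 and Thm.~A]{singh:conormalII}.
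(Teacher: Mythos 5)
Your proposal is correct and follows exactly the route the paper takes: the paper's proof amounts to observing that $\theta_{\underline w}'$ is proper and birational as the base change of the Bott--Samelson map along the bundle projection, that $\pi'$ is proper as a composition of proper maps and birational by \cite[Thm.~A]{singh:conormalII}, and then composing. You correctly isolate the birationality of $\pi'$ (resting on the cominuscule hypothesis via \cite[Lemma~2.1 and Thm.~A]{singh:conormalII}) as the only non-formal input, which is precisely where the paper also places the weight.
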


\section{Mather classes of Schubert varieties}\label{sec:mather-schubert}
In this section we prove the formula calculating the Mather classes of Schubert varieties in cominuscule spaces, and we illustrate the calculation with several examples.

\subsection{The formula for Mather classes}
Let $\mathcal{U}_w:= \overline{BwB}\times^B \mathfrak{u}_w \to X_w^B$ denote the restriction of the homogeneous bundle $G \times^B \mathfrak{u}_w$ to the Schubert variety $X_w^B$.
There is an exact sequence of homogeneous vector bundles on $X_w^B$ given by
\begin{equation}
\label{E:exact}
\begin{tikzcd}
0 \arrow[r] & \mathcal{U}_w \arrow[r] & \pi^* T^*(G/P)_{|X_w^B}\arrow[r] & \mathcal{T}_w^* \arrow[r] & 0
\end{tikzcd}
\end{equation}
where $\mathcal{T}_w^*:= \overline{BwB}\times^B (\mathfrak{u}_P/\mathfrak{u}_w)$.
Observe that $\mathcal{T}_w^*$ restricted to the open Schubert cell in $X_w^B$ is the cotangent bundle of this cell (explaining the choice of notation).
\Cref{thm:rescon} and the considerations from diagram \eqref{E:diag} imply that the diagram
\begin{equation}
\label{E:conres}
\begin{tikzcd}
\mathcal{U}
_w \arrow[r, "\pi'"] \arrow[d] & T^*_{X_w^P}(G/P) \arrow[d] \\ X_w^B \arrow[r,"\pi"] & X_w^P
\end{tikzcd}
\end{equation}
satisfies the hypotheses in \cref{lemma:segrepf} with $X'=Y' = X_w^B$ and $\mathcal{U}_w$ a subbundle of $\pi^* T^*(G/P)_{|X_w^P}$.
This allows us to calculate the Mather class of the Schubert variety $X_w^P$.
Let $\mathcal{T}_w$ denote the dual of the bundle $\mathcal{T}_w^*$.

\begin{theorem}
\label{thm:mather}
Let $w \in W^P$ be a minimal length representative.
Then the equivariant Mather class of $X_w^P$ is
\[
\cMaT(X_w^P)=\pi_*(c^T(\mathcal{T}_w) \cap [X_w^B]) \/,
\]
where $ c^T(\mathcal{T}_w) = \prod_{\alpha \in I(w)}c^T(\mathcal{L}_{-\alpha})$.
\end{theorem}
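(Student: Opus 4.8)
The plan is to combine the resolution of singularities of the conormal space from \cref{thm:rescon} with the birational invariance of Segre classes from \cref{lemma:segrepf}, and then feed the result into the formula \eqref{E:cmatv} relating the dual Mather class to the Segre class of the conormal space. Throughout I work $T$-equivariantly; since all the varieties, bundles, and morphisms in diagrams \eqref{E:diag}, \eqref{E:exact}, and \eqref{E:conres} are $T$-equivariant, this adds no difficulty beyond bookkeeping.

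First I would apply \cref{lemma:segrepf} to the square \eqref{E:conres}, with $f = \pi : X_w^B \to X_w^P$ (proper, since $X_w^B$ is projective), $E = T^*(G/P)$ restricted appropriately, $C = T^*_{X_w^P}(G/P)$, and $C' = \mathcal{U}_w$ viewed as a subcone (in fact a subbundle) of $\pi^* T^*(G/P)_{|X_w^B}$; the map $g = \pi'$ is proper and birational by \cref{thm:rescon} together with the discussion of diagram \eqref{E:diag} (note $\mathcal{E}_{\underline w} \to \mathcal{U}_w$ is a Bott--Samelson-type birational map over $X_w^B$, so $\mathcal{U}_w$ itself resolves the conormal space). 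The lemma then gives $\pi_*(s^T(\mathcal{U}_w)) = s^T(T^*_{X_w^P}(G/P))$ in $A^T_*(X_w^P)$. Since $\mathcal{U}_w$ is an honest vector bundle on $X_w^B$, its Segre class is $s^T(\mathcal{U}_w) = c^T(\mathcal{U}_w)^{-1} \cap [X_w^B]$.

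Next I would plug this into \eqref{E:cmatv}: $\cMaTv(X_w^P) = c^T(T^*(G/P)) \cap s^T(T^*_{X_w^P}(G/P))$. Using the projection formula and that $\pi^* c^T(T^*(G/P))$ is the Chern class of $\pi^*T^*(G/P)_{|X_w^B}$, this becomes $\pi_*\big(c^T(\pi^*T^*(G/P)_{|X_w^B}) \cdot c^T(\mathcal{U}_w)^{-1} \cap [X_w^B]\big)$. The exact sequence \eqref{E:exact} gives $c^T(\pi^*T^*(G/P)_{|X_w^B}) = c^T(\mathcal{U}_w)\, c^T(\mathcal{T}_w^*)$, so the two factors involving $\mathcal{U}_w$ cancel and we are left with $\cMaTv(X_w^P) = \pi_*(c^T(\mathcal{T}_w^*) \cap [X_w^B])$. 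Now I must pass from the dual Mather class back to $\cMaT$. Recalling that $\cMaTv$ is obtained from $\cMaT$ by flipping the sign of each component according to its cohomological degree (equivalently $\mathcal{T}_w^* \leftrightarrow \mathcal{T}_w$ is the sign-flip $c^T(\mathcal{L}_{-\alpha}) \mapsto c^T(\mathcal{L}_{\alpha})$ on Chern roots), and that $\dim X_w^B - \dim X_w^P = \ell(w) - \ell(w^P) $ contributes the overall factor $(-1)^{\dim Y}$ correctly via the definition $\cMaTv(Y) = ((-1)^{\dim Y}\cMaT(Y)^{-\hbar})|_{\hbar=1}$ — I will need to check the codimension bookkeeping here — dualizing back yields $\cMaT(X_w^P) = \pi_*(c^T(\mathcal{T}_w) \cap [X_w^B])$. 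Finally, since $\mathcal{T}_w^* = \overline{BwB}\times^B(\mathfrak{u}_P/\mathfrak{u}_w) = \bigoplus_{\alpha \in I(w)} \mathcal{L}_{-\alpha}$ as a $B$-module (the roots $\alpha \ge \alpha_P$ with $w(\alpha) < 0$ are exactly $I(w)$ in the cominuscule case, by \cite{singh:conormalII}), we get $c^T(\mathcal{T}_w) = \prod_{\alpha \in I(w)} c^T(\mathcal{L}_{-\alpha})$, which is the last displayed equality.

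The main obstacle I anticipate is not any single computation but verifying the sign/homogenization bookkeeping precisely: making sure the dehomogenization conventions of \eqref{E:homoga}, \eqref{E:c*iota}, and \eqref{E:cmatv} are threaded through consistently, that $s^T$ of a bundle really is $c^T(\cdot)^{-1}\cap[\cdot]$ equivariantly, and that the transition $\cMaTv \leftrightarrow \cMaT$ interacts correctly with the pushforward $\pi_*$ (which is degree-preserving on Chow groups, so the sign flip commutes with $\pi_*$). A secondary point to be careful about is checking that \cref{lemma:segrepf} genuinely applies: one needs $\mathcal{U}_w$ to be a subcone of $\pi^*(T^*(G/P))_{|X_w^B}$ over $Y' = \pi^{-1}(X_w^P) = X_w^B$ with $\dim \mathcal{U}_w = \operatorname{rank}$ of the ambient bundle, and that the birational map in the hypothesis is $\pi'$ precompose-free — i.e., that $\mathcal{U}_w$ (not just $\mathcal{E}_{\underline w}$) maps properly and birationally onto the conormal space, which follows by composing $\theta_{\underline w}'$ with $\pi'$ and noting $\theta_{\underline w}'$ is a birational map of vector bundles covering the birational $\theta_{\underline w}$.
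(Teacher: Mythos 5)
Your proposal follows the paper's proof essentially step for step: birational invariance of Segre classes applied to the resolution $\mathcal{U}_w \to T^*_{X_w^P}(G/P)$, the identity $s^T(\mathcal{U}_w)=c^T(\mathcal{U}_w)^{-1}\cap[X_w^B]$, cancellation against $c^T(T^*(G/P))$ via the exact sequence \eqref{E:exact}, and the final dualization $\cMaTv \leftrightarrow \cMaT$. The only slips are cosmetic: $\pi$ restricted to $X_w^B$ is birational onto $X_w^P$ for $w\in W^P$, so there is no codimension bookkeeping to worry about; and it is $\mathcal{T}_w$ (not $\mathcal{T}_w^*$) whose total Chern class equals $\prod_{\alpha\in I(w)}c^T(\mathcal{L}_{-\alpha})$, an equality justified by localization at fixed points rather than by a direct-sum decomposition, since $(\mathfrak{u}_P/\mathfrak{u}_w)^*$ splits as $\bigoplus_{\alpha\in I(w)}\mathfrak{g}_{-\alpha}$ only as a $T$-module.
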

\begin{proof}
It follows from the $T$-module decomposition,
\[
(\mathfrak{u}_P/\mathfrak{u}_w)^* = \bigoplus\limits_{\substack{\alpha\geq\alpha_P\\ w(\alpha)<0}}\mathfrak{g}_{-\alpha}= \bigoplus\limits_{\substack{\alpha\in I(w)}}\mathfrak{g}_{-\alpha}\/,
\]
that the total Chern class of the homogeneous vector bundle
$\mathcal{T}_w$ has the same localization at $T$-fixed points $e_v \in G/B$ as the Chern class of the vector bundle
\[
\bigoplus_{\alpha \in I(w)}G \times^B \C_{-\alpha}= \bigoplus_{\alpha \in I(w)}\mathcal{L}_{-\alpha}\/.
\]
%The latter has a filtration with quotients given by the line bundles $\mathcal{L}_{-\alpha}$, for $\alpha$ as above.
The result for $c^T(\mathcal{T}_w)$ now follows from Whitney's formula.

To prove the formula for the Mather class,
first observe that the birationality property of Segre classes (see \cref{lemma:segrepf})
applied to the diagram \eqref{E:conres} yields
\begin{equation}
\label{E:SSM}
 s^T(T^*_{X_w^P}(G/P)) = \pi_*(s^T(\mathcal{U}_w) \cap [X_w^B]) = \pi_*( c^T(\mathcal{U}_w)^{-1}\cap [X_w^B]) \/.
\end{equation}
It follows from the \cref{E:cmatv} version of \cref{thm:segre-mather} that
\[
\begin{split}
\cMaTv(X_w^P)  & = c^T(T^*(G/P)) \cap s^T(T^*_{X_w^P}(G/P)) \\ & = c^T(T^*(G/P) \cap  \pi_*( c^T(\mathcal{U}_w)^{-1}\cap [X_w^B]))  \\ &  =  \pi_* (c^T(\mathcal{T}_w^*) \cap [X_w^B]) \/,
\end{split}
\]
where the last equality follows from the projection formula.
The proof ends by changing the signs in each homogeneous component;
this corresponds to taking the Chern classes of the dual bundle $\mathcal{T}_w$.
\end{proof}

Another algorithm to calculate Mather classes, in the case of Grassmannians, was found by B.  Jones \cite{jones:csm}.
He used Zelevinsky's small resolutions for Schubert varieties \cite{MR705051}, and 
equivariant localization, to calculate the 
{\em Kazhdan-Lusztig class} (KL) of a Schubert variety.
As explained in \cref{s:positivity} below (see also \cite[\S 6]{AMSS:shadows}) this coincides with the Mather class.
Sankaran and Vanchinathan \cite{sankaran.v:small-res} found Schubert varieties in the Lagrangian Grassmannian which do not admit small resolutions and Perrin \cite{perrin:small-res} characterized the minuscule Schubert varieties with this property.

\subsection{Examples}\label{ss:examples}
The previous theorem, combined with the (equivariant) Chevalley formula, gives an effective way to calculate the (equivariant) Mather class.
We recall the equivariant Chevalley formula, following \cite[Thm.~8.1]{buch.m:nbhds};
see also \cite[Cor.~11.3.17 and Thm.~11.1.7]{kumar:book}.

Let $\lambda$ be a weight, and $\mathcal{L}_{\lambda}= G \times^B \C_\lambda$ be the associated line bundle.
Then
\[
c_1^T(\mathcal{L}_\lambda) \cap [X_w^B] =  w(\lambda) [X_w^B] + \sum_\alpha \langle -\lambda, \alpha^\vee \rangle [X_{w s_\alpha}^B] \/,
\]
where the sum is over all positive roots $\alpha$ such that $\ell(w s_\alpha) = \ell(w) -1$.
\begin{footnote}
{The minus sign is explained by the fact that if $\omega_i$ is the $i^{th}$ fundamental weight, then non-equivariantly $c_1(\mathcal{L}_{-\omega_i})$ is the class of the Schubert divisor $X^{s_i,B}$, an effective class.}
\end{footnote}
Applying repeatedly this formula we can recursively calculate the expression
\[
c^T(\mathcal{T}_w) \cap [X_w^B] = \sum_{v \le w}a_{w,v} [X_v^B] \/,
\]
where $a_{w,v} \in A^*_T(pt)$.
For instance,
\[
a_{w,w} = \prod_{\alpha \in I(w)}(1-w(\alpha)) \/.
\]
Recall that
\begin{align*}
\pi_*[X_v^B] =
\begin{cases}
[X_v^P]&\text{if }v \in W^P,\\
0&\text{otherwise.}
\end{cases}
\end{align*}
It follows that the equivariant Mather class of $X_w$ equals
$\sum\limits_{v \le w; v \in W^P}a_{w,v} [X_v^P] \/$.

\begin{example}
We illustrate this calculation next.
Let $G= \mathrm{SL}_4$ and the simple roots $\alpha_1 = \varepsilon_1 - \varepsilon_2, \alpha_2= \varepsilon_2 - \varepsilon_3, \alpha_3 = \varepsilon_3- \varepsilon_4$ (notation as in \cite{bourbaki}).
The maximal parabolic $P$ associated to $\alpha_2$ gives the Grassmannian $G/P =\Gr(2,4)$.
One may identify the elements in $W^P$ by Young diagrams, which in turn may be used to read both the inversion set and the reduced word for $w$;
see e.g. \cite[\S 3]{BCMP:qkchev} and \cref{sec:tables} below.
For instance, in the table below, the green portion corresponds to the Schubert divisor $X_{s_1 s_3 s_2}^P \subset \Gr(2,4)$,
with inversion set $I(s_1 s_3 s_2) = \{\alpha_2,\varepsilon_2-\varepsilon_4,\varepsilon_1-\varepsilon_3 \}$.
The Schubert divisor is the smallest example of a singular Schubert variety in $\Gr(2,4)$:
it is a $3$ dimensional quadric singular at the point $1.P$.
\begin{center}
\begin{tabular}{cc}
\ytableausetup{centertableaux,boxsize=2.2em}

\begin{ytableau}*(green) \scriptstyle \alpha_2 & *(green){ \scriptstyle \varepsilon_2-\varepsilon_4}\\
  *(green) {\scriptstyle \varepsilon_1-\varepsilon_3}&{\scriptstyle \varepsilon_1-\varepsilon_4} 
  \end{ytableau}\hskip2cm

\ytableausetup{centertableaux,boxsize=normal}
\begin{ytableau}*(green) 2 & *(green){3} \\*(green) {1}& {2} \end{ytableau}

\end{tabular}
\end{center}
The space $G/B$ is the complete flag manifold $\mathrm{Fl}(4)$.
The Chevalley formula in $A^*(\mathrm{Fl}(4))$ gives that
\begin{equation}
\label{E:132B}
\begin{split}
c(\mathcal{T}_w) \cap [X_{s_1 s_3 s_2}^B] = & [X_{s_1 s_3 s_2}^B]+3[X_{s_3 s_2}^B]+ 4[X_{s_3 s_1}^B]+3[X_{s_3}^B]+ 3 [X_{s_1 s_2}^B] \\ & +8[X_{s_2}^B]+3 [X_{s_1}^B]+ 6 [X_{id}^B] \/.
\end{split}
\end{equation}
Pushing forward to $\Gr(2,4)$, we obtain the Mather class:
\begin{equation}
\label{E:Ma312}
\begin{split} 
\cMa(X_{s_1 s_3 s_2}^P) = & [X_{s_1 s_3 s_2}^P]+3[X_{s_3 s_2}^P]+ 3 [X_{s_1 s_2}^P]+8[X_{s_2}^P]+ 6 [X_{id}^P]  \\
= & {\tableau{6}{{}& {}\\ {}&}} + 3~{\tableau{6}{{} & {}}}+ 3~ {\tableau{6}{{}\\ {}}}+ 8~ {\tableau{6}{{}}}+ 6 ~{\emptyset}
\/.
\end{split}
\end{equation}
(We simply used $\lambda$ for the Schubert class indexed by $\lambda$.)
The equivariant calculation is more involved, and we present only the final answer.
\begin{equation}
\begin{split}
\cMaT (X_{s_1 s_3 s_2}^P) = & (1+\alpha_1)(1+\alpha_3)(1+\alpha_1+\alpha_2+\alpha_3) \tableau{6}{{}&{}\\ {}} \\ 
& + (1+ \al_3)(3+ \al_1 + 2 \al_2 + 2\al_3+3)\tableau{6}{{} & {}} \\
& + (1+ \al_1)(3+ 2\al_1+2 \al_2 + \al_3+3) \tableau{6}{{} \\ {}} \\
& + (8+2 \al_1 + 4 \al_2+ 2 \al_3) \tableau{6}{{}} + 6 ~\emptyset \/.
\end{split}
\end{equation}
\end{example}

\begin{example}
In our next example we consider $G= \mathrm{SL}_6$ and the Grassmannian $G/P = \Gr(3,6)$.
We consider the Schubert variety indexed by the green portion in the diagrams below.
\begin{center}
\begin{tabular}{cc}
\ytableausetup{centertableaux,boxsize=2.2em}
\begin{ytableau}
*(green) \scriptstyle \alpha_3                      & *(green){\scriptstyle \varepsilon_3-\varepsilon_5}  & *(green){\scriptstyle \varepsilon_3-\varepsilon_6}\\
*(green) {\scriptstyle \varepsilon_2-\varepsilon_4} & *(green) {\scriptstyle \varepsilon_2-\varepsilon_5} & { \scriptstyle \varepsilon_2-\varepsilon_6}\\
*(green) \scriptstyle \varepsilon_1-\varepsilon_4   & {\scriptstyle \varepsilon_1-\varepsilon_5}          & {\scriptstyle \varepsilon_1-\varepsilon_6}
\end{ytableau} \hskip2cm
\ytableausetup{centertableaux,boxsize=normal}
\begin{ytableau}
*(green) 3   & *(green){4}  & *(green) {5} \\
*(green) {2} & *(green) {3} & {4} \\
*(green) 1   & 2            & 3
\end{ytableau}
\end{tabular}
\end{center}
Then $w = s_1 s_3 s_2 s_5 s_4 s_3$ and the inversion set consists of the roots in the green boxes.
The Mather class equals
\[
\begin{split}
\cMa(\tableau{6}{{} & {} & {} \\ {} & {} \\ {}}) & =  \tableau{6}{{} & {} & {} \\ {} & {} \\ {}} + 4 \tableau{6}{{} & {} & {} \\ {} & {}} + 4 \tableau{6}{{} & {} & {} \\ {} \\ {}} + 4 \tableau{6}{{} & {} \\ {} & {} \\ {}} + 15 \tableau{6}{{} & {} & {} \\ {}} + 15 \tableau{6}{{} & {} \\ {} & {}} + 15 \tableau{6}{{} & {} \\ {} \\ {}} \\ & + 17 \tableau{6}{{} & {} & {}} + 52 \tableau{6}{{} & {} & \\ {}} + 17 \tableau{6}{{}\\ {} & \\ {}} + 54 \tableau{6}{{} & {}} + 54 \tableau{6}{{} \\ {}} + 60 \tableau{6}{{}} + 24 ~\emptyset \/.
\end{split}
\]
This is in accordance \cite[Table 2]{jones:csm}.
In fact, we were able to recover all calculations from {\em loc.~cit.} - see \cref{sec:tables} below.
\end{example}

\begin{example}
\label{Ex:LG431}
We now consider the symplectic group $\mathrm{Sp}_8$ and the Lagrangian Grassmannian $\LG(4,8)$.
The elements in $W^P$ are indexed by strict partitions included in the $(4,3,2,1)$ staircase.
Take $\lambda = (4,3,1)$ corresponding to the green boxes in the diagram below.
As usual, the notation for the roots follows \cite{bourbaki};
we refer the reader to \cite[\S3]{BCMP:qkchev} and \cref{sec:tables} below for further combinatorial details. 
\begin{center}
\begin{tabular}{cc}
\ytableausetup{centertableaux,boxsize=2.2em}
\begin{ytableau}*(green) \scriptstyle \alpha_4 & *(green){\scriptstyle \varepsilon_3+\varepsilon_4} 
&  *(green){\scriptstyle \varepsilon_2+\varepsilon_4} 
& *(green) {\scriptstyle \varepsilon_1+\varepsilon_4}
\\ \none &  *(green) {\scriptstyle 2\varepsilon_3} & *(green) { \scriptstyle \varepsilon_2+\varepsilon_3} 
& *(green) \scriptstyle \varepsilon_1+\varepsilon_3 
\\ \none & \none & *(green) {\scriptstyle 2 \varepsilon_2} & {\scriptstyle \varepsilon_1+\varepsilon_2} 
\\ \none & \none & \none & {\scriptstyle 2 \varepsilon_1} \end{ytableau} \hskip2cm
\ytableausetup{centertableaux,boxsize=normal}
\begin{ytableau}*(green) 4 & *(green){3} &  *(green){2} & *(green) {1}
\\ \none &  *(green) {4} & *(green) {3} & *(green) 2 \\ \none & \none & *(green) {4} & {3} 
\\ \none & \none & \none & {4} \end{ytableau} 
\end{tabular}
\end{center}
In this case the reduced word is $w=s_4 s_2 s_3 s_4 s_1 s_2 s_3 s_4$,
and the inversion set consists of the entries in the green boxes of the first diagram.
The Mather class is given by 
\[
\begin{split}
\cMa\bigl(\tableau{6}{{} & {} & {} & {} \\ & {} & {} & {} \\ & & {}}\bigr)  & = \tableau{6}{{} & {} & {} & {} \\ & {} & {} & {} \\ & & {}} + 4 \tableau{6}{{} & {} & {} & {} \\ & {} & {} & {}} + 7 \tableau{6}{{} & {} & {} & {} \\ & {} & {} \\ & & {}} + 27 \tableau{6}{{} & {} & {} & {} \\ & {} & {}} + 25 \tableau{6}{{} & {} & {} \\ & {} & {} \\ & & {}} + 60 \tableau{6}{{} & {} & {} & {} \\ & {}} + 92 \tableau{6}{{} & {} & {} \\ & {} & {}} \\ & + 45 \tableau{6}{{} & {} & {} & {}} + 241 \tableau{6}{{} & {} & {} \\ & {}} + 183 \tableau{6}{{} & {} & {}} + 269 \tableau{6}{{} & {}\\ & {}} + 246 \tableau{6}{{} & {}} + 132 \tableau{6}{{}} + 24 ~\emptyset \/.
\end{split}
\]
\end{example}

\begin{remark}
\label{rm:gg}
We observe that the bundle $\mathcal{T}_w$ is {\em not} globally generated, even when restricted to $X_w^B$;
if it were, then multiplying its total non-equivariant Chern class by any Schubert class would have to be effective.
To see that this is not the case, consider the situation above when $G/P = \Gr(2,4)$, $w=s_1 s_3 s_2 \in W^P$, and take $u=s_3$.
Then $X_u^B \subset X_w^B$ and $c(\mathcal{T}_w) \cap [X_u^B] = [X_{s_3}^B] - [X_{id}^B]$.
Despite this, examples suggest that the Mather classes are effective;
see \cref{sec:positivity} below for more about this. 
\end{remark}

More examples are included in \cref{sec:tables} below.

\section{A cohomological formula for the local Euler obstruction}
\label{sec:eulerobs}
The goal of this section is to prove \cref{thm:eulerobs},
which gives a formula for the local Euler obstruction function for cominuscule Schubert varieties.
Different formulae in the classical Lie types were also obtained by Boe and Fu \cite{boe.fu}.

Because the CSM class of the Schubert cell $\csm(X_u^{P,\circ})$ is a deformation of the fundamental class $[X_u^P]$ of the Schubert variety,
the CSM classes $\csm(X_u^{P,\circ})$ ($u \in W^P$) form a basis of the Chow group $A_*(G/P)$.
It was proved in \cite{AMSS:shadows} that the (Poincar{\'e}) dual basis of
the CSM basis of Schubert cells with respect to the intersection pairing
is the family of the {\em Segre-Schwartz-MacPherson} (SSM) classes $\ssm((X^P)^{v,\circ})$ ($v \in W^P$) of the opposite Schubert cells.

We recall next the relevant definitions.
The SSM class is defined by
\[
\ssm((X^P)^{v,\circ}) = \frac{\csm((X^P)^{v,\circ})}{c(T(G/P))}\/.
\]
The Chern class in the quotient is invertible because $c(T(G/P)) = 1+\kappa$, where $\kappa$ is a nilpotent element.
Then by \cite[Thm.9.4]{AMSS:shadows}, 
\begin{equation}
\label{E:duality}
\langle \csm(X_u^{P,\circ}), \ssm((X^P)^{v,\circ}) \rangle = \int_{G/P}\csm(X_u^{P,\circ}) \cdot \ssm((X^P)^{v,\circ}) = \delta_{u,v}\/.
\end{equation}
Combined with \cref{thm:mather}, this duality implies a cohomological formula for the Euler obstruction coefficients $e_{w,v}$ from equation \eqref{E:cMa};
we record this next.

\begin{theorem}
\label{thm:eulerobs}
Let $v,w \in W^P$ and assume that $v \le w$.
Then the local Euler obstruction coefficient $e_{w,v}$ is given by
\[
e_{w,v}= \sum \int_{G/B}\frac{c(\mathcal{T}_w) \cdot [X_w^B]}{c(T(G/B))}\cdot \csm((X^B)^{u,\circ})=  \sum \int_{G/B}c(\mathcal{T}_w) \cdot [X_w^B] \cdot \csmv((X^B)^{u,\circ}) \/,
\]
where the sum is over $u \in W; v \le u \le w$ such that $uW_P = vW_P$.
\end{theorem}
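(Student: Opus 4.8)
The plan is to pair the two expressions for $\cMa(X_w^P)$ against the Segre--Schwartz--MacPherson (SSM) classes of the opposite Schubert cells, and then to transport the resulting intersection number from $G/P$ to $G/B$ along the projection $\pi\colon G/B\to G/P$.

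First I would combine the defining expansion \eqref{E:Ma=CSM} with the duality \eqref{E:duality}. Since $\{\csm(X_u^{P,\circ})\}$ and $\{\ssm((X^P)^{v,\circ})\}$ are dual bases of $A_*(G/P)$, pairing $\cMa(X_w^P)=\sum_{v'} e_{w,v'}\,\csm(X_{v'}^{P,\circ})$ against $\ssm((X^P)^{v,\circ})$ isolates $e_{w,v}=\int_{G/P}\cMa(X_w^P)\cdot\ssm((X^P)^{v,\circ})$. Substituting the (non-equivariant specialization of the) formula $\cMa(X_w^P)=\pi_*\bigl(c(\mathcal T_w)\cap[X_w^B]\bigr)$ from \cref{thm:mather} and applying the projection formula gives
\[
e_{w,v}=\int_{G/B} c(\mathcal T_w)\cap[X_w^B]\cdot\pi^*\ssm((X^P)^{v,\circ}).
\]

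The next step is to identify $\pi^*\ssm((X^P)^{v,\circ})$. The morphism $\pi$ is smooth and $\pi^{-1}((X^P)^{v,\circ})=\bigsqcup_{u\in vW_P}(X^B)^{u,\circ}$, because the $B^-$-orbit $(X^B)^{u,\circ}$ maps onto $(X^P)^{v,\circ}$ precisely when $uW_P=vW_P$. Hence $\pi^*\one_{(X^P)^{v,\circ}}=\one_{\pi^{-1}((X^P)^{v,\circ})}=\sum_{u\in vW_P}\one_{(X^B)^{u,\circ}}$, and the Verdier--Riemann--Roch theorem \cref{thm:VRR} together with the additivity of $c_*$ yields $\pi^*\ssm((X^P)^{v,\circ})=\sum_{u\in vW_P}\ssm((X^B)^{u,\circ})$. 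Plugging this in and writing $\ssm(\cdot)=\csm(\cdot)/c(T(G/B))$ produces the first displayed formula, with the sum a priori over all $u\in vW_P$. To cut the index set down to $v\le u\le w$, note that $v=u^P$ is the minimal-length representative of $uW_P$, so $v\le u$ is automatic; and expanding $c(\mathcal T_w)\cap[X_w^B]=\sum_{v'\le w}a_{w,v'}[X_{v'}^B]$, the number $\int_{G/B}[X_{v'}^B]\cdot\ssm((X^B)^{u,\circ})$ is the coefficient of $[X^{v',B}]$ in $\ssm((X^B)^{u,\circ})$, a class supported on indices $v'\ge u$; no $v'$ is simultaneously $\ge u$ and $\le w$ unless $u\le w$, so the remaining terms vanish.

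Finally, the second equality --- replacing $\csm((X^B)^{u,\circ})/c(T(G/B))$ by $\csmv((X^B)^{u,\circ})$ inside the integral against $c(\mathcal T_w)\cap[X_w^B]$ --- I would deduce from the identity relating Segre--MacPherson classes and dual CSM classes on $G/B$, citing \cite{AMSS:shadows}, together with $\csm((X^B)^{u,\circ})=c(T(G/B))\cdot\ssm((X^B)^{u,\circ})$ and the compatibility of the degreewise sign-change involution with the degree-zero push-forward $\int_{G/B}$. I expect the technical heart to be the pull-back identity $\pi^*\ssm((X^P)^{v,\circ})=\sum_{u\in vW_P}\ssm((X^B)^{u,\circ})$, which requires both \cref{thm:VRR} applied to $\pi$ and the orbit bookkeeping $\pi^{-1}((X^P)^{v,\circ})=\bigsqcup_{u\in vW_P}(X^B)^{u,\circ}$; once these are in hand, the rest is formal manipulation with the duality \eqref{E:duality} and the projection formula, together with \cref{thm:mather}.
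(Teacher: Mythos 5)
Your proposal is correct and follows essentially the same route as the paper's proof: duality \eqref{E:duality} plus the projection formula, the Verdier--Riemann--Roch identification $\pi^*\ssm((X^P)^{v,\circ})=\sum_u\ssm((X^B)^{u,\circ})$, the identity $\ssm((X^B)^{u,\circ})=\csmv((X^B)^{u,\circ})$ from \cite{AMSS:shadows}, and a support argument to restrict the sum to $u\le w$. The only cosmetic difference is that you verify the vanishing for $u\not\le w$ by expanding in Schubert classes, whereas the paper argues directly that $\csmv((X^B)^{u,\circ})$ is supported on $X^{B,u}$ so $[X_w^B]\cdot[X^{B,u}]=0$ unless $u\le w$.
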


\begin{proof}
By the duality equation \eqref{E:duality}, and the projection formula, the Euler obstruction coefficients are given by
\[
e_{w,v}= \int_{G/P}\pi_*(c(\mathcal{T}_w) \cdot [X_w^B]) \cdot \ssm((X^P)^{v,\circ}) = \int_{G/B}c(\mathcal{T}_w) \cdot [X_w^B] \cdot \pi^*\ssm((X^P)^{v,\circ}) \/.\]
The Verdier-Riemann-Roch formula implies that
\[
\pi^*\ssm((X^P)^{v,\circ}) = \ssm(\pi^{-1}((X^P)^{v,\circ})) = \sum \ssm((X^B)^{u,\circ}) \/,
\]
where the sum is over all $u \ge v$ such that $uW_P = vW_P$.
Further, it is proved in \cite[Cor. 7.4]{AMSS:shadows} that for any $u \in W$,
\[
\ssm((X^B)^{u,\circ}) = \csmv ((X^B)^{u,\circ}) \/.
\]
Since the class $\csmv (X^B)^{u,\circ})$ is supported on the Schubert variety $X^{B,u}$, it follows that the product $[X_w^B] \cdot [X^{B,u}] = 0$ unless $u \le w$.
The claim follows by combining the three equations above.
\end{proof}

We note that an explicit calculation for the CSM classes was obtained in \cite{aluffi.mihalcea:eqcsm}.
Therefore the integrals from \cref{thm:eulerobs} can be explicitly computed in small examples,
using software such as the {\em Equivariant Schubert Calculator} by A. Buch.

\begin{example}
We continue with the example $G/P =\Gr(2,4)$ and $X_w^P = X_{s_1 s_3 s_2}^P$ the Schubert divisor.
Recall from equation \eqref{E:132B}:
\[
c(\mathcal{T}_w) \cap [X_{s_1 s_3 s_2}^B] =  [X_{s_1 s_3 s_2}^B]+3[X_{s_3 s_2}^B]+ 4[X_{s_3 s_1}^B]+3[X_{s_3}^B]+ 3 [X_{s_1 s_2}^B]+8[X_{s_2}^B]+3 [X_{s_1}^B]+ 6 [X_{id}^B] \/.
\]
If we take $v=s_3s_2$, then $u=v$ and we obtain that $\csmv(X^{B, s_3 s_2,\circ})$ equals
\begin{small}
\[
\begin{split}
 & [X^{B,s_3 s_2}]-[X^{B, s_3s_2 s_1}]-2[X^{B, s_3 s_1s_2}]+[X^{B, s_3 s_1 s_2 s_1}]+4[X^{B,s_2 s_3 s_1 s_2}] \\ & -2[X^{B,s_2 s_3 s_1 s_2 s_1}]-[X^{B,s_2 s_3 s_2}]+[X^{B, s_2s_3s_2s_1}]+3[X^{B,s_1s_2s_3s_2}]-2[X^{B,s_1s_2s_3s_2s_1}]\\ & +[X^{B,s_1s_2s_3s_1s_2s_1}]-3[X^{B,s_1s_2s_3s_1s_2}] \/.
\end{split}
\]
\end{small}
Since $\int_{G/B}[X_{v_1}^B] \cdot [X^{B,v_2}] = \delta_{v_1,v_2}$ we obtain that
\[
e_{s_1 s_3 s_2,s_3 s_2}= -2+3 = 1 \/.\] Of course, this was expected, as the Schubert divisor is only singular at the base point $1.P$.
\end{example}
\begin{example}
\label{ex:EulerLG24}
Consider the Lagrangian Grassmannian $\mathrm{LG}(2,4)$.
This is isomorphic to a $3$ dimensional quadric in $\bP^4$.
The set $W^P$ indexing the Schubert varieties is in bijection with the strict partitions in the $2 \times 2$ rectangle: $(0), (1), (2)$ and $(2,1)$.
The only singular Schubert variety is the divisor $X_{\tableau{5}{{}&{}}}^P$, with singularity at the point $X_{\emptyset}^P=1.P$.
One calculates that 
\[ e_{(2),(2)} = e_{(2),(1)} =1; \quad e_{(2),(0)} =0 \/. \]
This verifies examples from \cite[p.~456]{boe.fu}.
Using the isomorphism of $\mathrm{LG}(2,4)$ to the $3$-dimensional quadric, 
it also verifies one instance of \cite[Eq.(6.3.3)]{boe.fu}.
\end{example}

\begin{example}
\label{ex:EulerLG36}
We now consider the Lagrangian Grassmannian $\mathrm{LG}(3,6)$.
In this case the Schubert varieties are indexed by strict partitions in the $3 \times 3$ rectangle.
An interesting example is obtained by considering the divisor $X_{(3,2)}^P$.
In this case, the Euler obstructions are:
\begin{align*}
e_{(3,2),(3,2)} = e_{(3,2),(3,1)} = e_{(3,2),(2,1)} =  e_{(3,2),(0)}= 1 \\
e_{(3,2),(3)} = e_{(3,2),(2)} = e_{(3,2),(1)}=0 \/.
\end{align*}
Observe that the Euler obstruction at $X_{id}^P= 1.P$ is $1$, even though the variety is singular at that point. 
\end{example}

\section{Localization of conormal spaces}
\label{sec:localization}
The goal of this section is to use \cref{thm:mather} to obtain formulae for the localization of conormal spaces.

Denote by $\iota:G/P \hookrightarrow T^*(G/P)$ the zero section.
By equation \eqref{E:cmath},
\begin{equation}
\label{E:pbi}
\iota^*[T^*_{X_w^P}(G/P)]_{T \times \C^*} = (-1)^{\ell(w)}\cMaT(X_w^P)^\hbar = (-1)^{\ell(w)}\pi_*((c^T(\mathcal{T}_w) \cap [X_w^B])^\hbar) \/.
\end{equation}
This is a class in $A_0^{T \times \C^*}(G/P)$ and it is obtained by homogenizing the equivariant (homology) class $\cMaT(X_w^P)$.
We can use \eqref{E:pbi} to calculate the localization at the fixed points of the conormal space.
Fix $u \in W^P$ such that $u \le w$.
If we write
\[
\cMaT(X_w^T) = \sum a_v [X_v^P] \/,
\]
where $a_v \in A^*_T(pt)$, then localizing at $u \in W$ gives
\[
\cMaT(X_w^T)|_u = \sum a_v [X_v^P]|_u \/.
\]
The coefficients $a_v$ are not homogeneous, thus homogenizing each term $(a_v [X_v^P]|_u)$ amounts to multiplying each homogeneous component by the appropriate power of 
$\hbar \in A^1_{T \times \C^*}(pt)$ such that the total cohomological degree equals $\dim G/P$ (i.e. homological degree $0$.) 

A more explicit formula is obtained by analyzing the homogenization $(c^T(\mathcal{T}_w) \cap [X_w^B])^\hbar$.
Let $a_1, \ldots , a_{\ell(w)}$ be the $T$-equivariant Chern roots of $\mathcal{T}_w$.
Then
\[
c(\mathcal{T}_w) \cap [X_w^B] = \Bigl(\prod_{i=1}^{\ell(w)}(1 + a_i)\Bigr)\cap [X_w^B] = \sum_{i \ge 0} e_i (a_1, \ldots, a_{\ell(w)}) \cap [X_w^B] \/,
\]
where $e_i$ denote the elementary symmetric functions.
The term $e_i (a_1, \ldots, a_{\ell(w)}) \cap[X_w^B]$ belongs to the equivariant Chow group $A_{\ell(w) -i}^T(G/B)$, therefore its homogenization by $\hbar$ is
\begin{equation}
\label{E:chomog}
(c(\mathcal{T}_w) \cap [X_w^B])^\hbar = \sum_{i \ge 0} \hbar^{\ell(w) - i}e_i (a_1, \ldots, a_{\ell(w)}) \cap [X_w^B] = \Bigl(\prod_{i=1}^{\ell(w)}(\hbar + a_i)\Bigr) \cap [X_w^B] \/.
\end{equation}
One key observation is that the quantity $(-1)^{\ell(w)}\prod_{i=1}^{\ell(w)}(\hbar + a_i)$ has a geometric interpretation.
If one considers the $\C^*$-action on the cotangent bundle $T^*(G/B)$ with character $\hbar^{-1}$, then the elements $-\hbar -a_i$ are the $T \times \C^*$-equivariant Chern roots of $\mathcal{T}_w^*$.
Thus
\begin{equation}
\label{E:topTC}
(-1)^{\ell(w)}\prod_{i=1}^{\ell(w)}(\hbar + a_i) = c^{T \times \C^*}_{\ell(w)}(\mathcal{T}^*_w) \/.
\end{equation}
The fibre of $\mathcal{T}^*_w$ over the fixed point $e_v$ is
%\[
%v.\boldsymbol\bigoplus_{\alpha \in I(w)}\mathfrak{g}_{\alpha}\otimes \C_{-\hbar}= \bigoplus_{\alpha \in I(w)}\mathfrak{g}_{v(\alpha)}\otimes \C_{-\hbar}\/,
%\]
\[
v.\bigoplus_{\alpha \in I(w)}\mathfrak{g}_{\alpha}\otimes \C_{-\hbar}= \bigoplus_{\alpha \in I(w)}\mathfrak{g}_{v(\alpha)}\otimes \C_{-\hbar}\/,
\]
and from this we deduce the formula for the localization:
\[
(c^{T\times \C^*}_{\ell(w)}(\mathcal{T}_w^*))|_{v}= \prod_{\alpha \in I(w)}(-\hbar + v(\alpha)) \/.
\]
Combining with the equation \eqref{E:chomog}, this proves the following Lemma:
\begin{lemma}
\label{lemma:ctwloc}
Let $v \le w$.
Then the following holds in $A^{\dim G/B}_{T \times \C^*}(pt)$:
\[
(-1)^{\ell(w)}((c(\mathcal{T}_w) \cap [X_w^B])^\hbar)|_v =  \prod_{\alpha \in I(w)}(-\hbar + v(\alpha)) \cdot [X_w^B]|_v \/.
\]
\end{lemma}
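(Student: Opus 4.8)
The plan is to assemble the lemma from three ingredients already developed above: the combinatorial description of the homogenization in \eqref{E:chomog}, the identification of the top Chern class of $\mathcal{T}_w^*$ in \eqref{E:topTC}, and the computation of the fibre of $\mathcal{T}_w^*$ at a $T$-fixed point $e_v$. First I would start from \eqref{E:chomog}, which expresses $(c(\mathcal{T}_w)\cap[X_w^B])^\hbar$ as $\bigl(\prod_{i=1}^{\ell(w)}(\hbar+a_i)\bigr)\cap[X_w^B]$, where $a_1,\dots,a_{\ell(w)}$ are the $T$-equivariant Chern roots of $\mathcal{T}_w$. Since $[X_w^B]|_v$ is a scalar in $A^*_{T}(pt)$ and the homogenization is $\hbar$-linear in the usual sense, localizing at $v$ and pulling the scalar out gives $\bigl(\prod_{i=1}^{\ell(w)}(\hbar+a_i)\bigr)|_v\cdot[X_w^B]|_v$; here the Chern roots are restricted to $e_v$.

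Next I would multiply by $(-1)^{\ell(w)}$ and invoke \eqref{E:topTC}, which says $(-1)^{\ell(w)}\prod_{i=1}^{\ell(w)}(\hbar+a_i)=c^{T\times\C^*}_{\ell(w)}(\mathcal{T}_w^*)$, the top $T\times\C^*$-equivariant Chern class of $\mathcal{T}_w^*$, where the $\C^*$ acts on $T^*(G/B)$ with character $\hbar^{-1}$. Thus the left-hand side of the lemma equals $\bigl(c^{T\times\C^*}_{\ell(w)}(\mathcal{T}_w^*)\bigr)|_v\cdot[X_w^B]|_v$. To finish I would compute this localization directly from the fibre: the fibre of $\mathcal{T}_w^*$ over $e_v$ is $v.\bigoplus_{\alpha\in I(w)}\mathfrak{g}_\alpha\otimes\C_{-\hbar}=\bigoplus_{\alpha\in I(w)}\mathfrak{g}_{v(\alpha)}\otimes\C_{-\hbar}$, so its $T\times\C^*$-weights are $v(\alpha)-\hbar$ for $\alpha\in I(w)$, and therefore $\bigl(c^{T\times\C^*}_{\ell(w)}(\mathcal{T}_w^*)\bigr)|_v=\prod_{\alpha\in I(w)}(-\hbar+v(\alpha))$. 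Substituting this back yields exactly the asserted identity in $A^{\dim G/B}_{T\times\C^*}(pt)$.

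I do not expect a genuine obstacle here: each of the three inputs is established in the surrounding text, and the only care needed is bookkeeping. The one point worth stating carefully is that localization at the fixed point $e_v$ commutes with homogenization — i.e. that homogenizing and then restricting to $v$ gives the same answer as restricting the Chern-root product $\prod(\hbar+a_i)\cap[X_w^B]$ to $v$ — which holds because homogenization by $\hbar$ and the localization map $A^{T}_*(G/B)\to A^*_T(pt)$ are both operations on the coefficients and the degrees are tracked consistently; concretely, it is already built into the formula \eqref{E:chomog} that the homogenized class is the honest product $\prod_{i=1}^{\ell(w)}(\hbar+a_i)\cap[X_w^B]$ in $A^{T\times\C^*}_*(G/B)$, so restriction to $v$ is just evaluation of equivariant Chern classes at a fixed point. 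Once this is noted, the proof is a one-line chain of equalities.
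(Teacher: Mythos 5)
Your proposal is correct and follows exactly the route the paper takes: the lemma is stated there as the immediate combination of \eqref{E:chomog}, the identification \eqref{E:topTC} of $(-1)^{\ell(w)}\prod(\hbar+a_i)$ with $c^{T\times\C^*}_{\ell(w)}(\mathcal{T}_w^*)$, and the fibre computation of $\mathcal{T}_w^*$ at $e_v$, which is precisely your chain of equalities. Nothing is missing.
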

We note that since $\C^*$ acts trivially on $G/B$, the $T \times \C^*$-localization $[X_w^B]|_v$ is the same as 
the $T$-equivariant localization.
A formula for the latter can be found in \cite[Thm. 11.1.7]{kumar:book}; 
see also \cite[App. D]{AJS:representations} or \cite{billey:kostant}.

The last step to calculate the localization of the class of the conormal space $T^*_{X_w^P}(G/P)$ is to relate the localization of the class from \cref{lemma:ctwloc} to the localization of its push forward.
For that, we need a generalization of the localization formula.
For $v \in W$, let $[e_v]$ denote the $T$-equivariant class of the fixed point $e_v$.
\begin{lemma}
\label{lemma:pf}
Let $u \in W^P$ and let $\kappa \in H^*_{T}(G/B)$ be any equivariant cohomology class.
Then
\[
\pi_*(\kappa)|_{uW_P}= \sum \frac{[e_{uW_P}]|_{uW_P}}{[e_v]|_v}\kappa|_v \/
\]
in an appropriate localization of $A^*_T(G/B)$, where the sum is over $v \in W$ such that $uW_P = v W_P$.
\end{lemma}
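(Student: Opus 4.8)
The plan is to derive this from the standard Atiyah--Bott/Berline--Vergne localization formula for $G/B$ together with the fact that $\pi:G/B\to G/P$ is a locally trivial fibration with fiber $P/B$, whose fixed points over $e_{uW_P}$ are exactly the $e_v$ with $vW_P=uW_P$. First I would recall the self-intersection formula: for a $T$-fixed point $e_v$ in a smooth $T$-variety $M$, the class $[e_v]$ localizes to $[e_v]|_v=c_{\mathrm{top}}^T(T_{e_v}M)=e^T(T_{e_v}M)$, the equivariant Euler class of the tangent space, and $[e_v]|_{v'}=0$ for $v'\neq v$; moreover for any $\kappa\in H^*_T(M)$ one has the localization identity $\int_M\kappa=\sum_{v}\kappa|_v/e^T(T_{e_v}M)$, and more refined, the class $\kappa$ itself is recovered from its restrictions. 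The key input is compatibility of Euler classes with the fibration: $T_{e_v}(G/B)=\pi^*T_{e_{uW_P}}(G/P)\oplus T_{e_v}(P/B)$ as $T$-modules, so $[e_v]|_v=\bigl([e_{uW_P}]|_{uW_P}\bigr)\cdot e^T\bigl(T_{e_v}(P/B)\bigr)$, the first factor being pulled back (hence constant along the fiber) and the second being the Euler class of the tangent space to the fiber at $e_v$.

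Next I would apply functoriality of localization under the proper pushforward $\pi_*$. Concretely, restrict $\pi_*(\kappa)$ to $e_{uW_P}$ and use that $\pi$ restricted over a $T$-invariant affine neighborhood of $e_{uW_P}$ in $G/P$ is $T$-equivariantly a product with the fiber $P/B$; then $\pi_*$ is integration along the fiber $P/B$. Applying the localization formula on the fiber $P/B$ (whose $T$-fixed points are the $e_v$, $vW_P=uW_P$) gives
\[
\pi_*(\kappa)|_{uW_P}=\int_{P/B}\kappa|_{\pi^{-1}(e_{uW_P})}=\sum_{vW_P=uW_P}\frac{\kappa|_v}{e^T\bigl(T_{e_v}(P/B)\bigr)}\/.
\]
Substituting $e^T\bigl(T_{e_v}(P/B)\bigr)=[e_v]|_v\big/[e_{uW_P}]|_{uW_P}$ from the previous paragraph yields exactly the claimed formula
\[
\pi_*(\kappa)|_{uW_P}=\sum_{vW_P=uW_P}\frac{[e_{uW_P}]|_{uW_P}}{[e_v]|_v}\,\kappa|_v\/.
\]
Alternatively, and perhaps more cleanly, I would avoid the explicit fiber integration by writing $\kappa=\sum_{v}\bigl(\kappa|_v/[e_v]|_v\bigr)[e_v]$ after inverting the $[e_v]|_v$ in the localized ring, using $\pi_*[e_v]=[e_{vW_P}]$ (pushforward of a point class is a point class), and then restricting the resulting identity $\pi_*(\kappa)=\sum_v\bigl(\kappa|_v/[e_v]|_v\bigr)[e_{vW_P}]$ to $e_{uW_P}$, where only the terms with $vW_P=uW_P$ survive.

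The main obstacle, and the point requiring care, is that these manipulations only make sense after inverting suitable elements of $A^*_T(pt)$ — specifically the weights appearing in the tangent spaces at fixed points — so I would be explicit about working in the localization $S^{-1}A^*_T(G/B)$ where $S$ is the multiplicative set generated by the nonzero $T$-weights of all $T_{e_v}(G/B)$, and invoke the injectivity of $A^*_T(G/B)\to\bigoplus_v A^*_T(pt)$ after this localization (the content of the GKM/localization theorem, as in \cite[Thm. 11.1.7]{kumar:book} and \cite[App. D]{AJS:representations}). The only genuinely geometric step is the Euler-class splitting $T_{e_v}(G/B)\cong\pi^*T_{e_{uW_P}}(G/P)\oplus T_{e_v}(P/B)$, which follows from the short exact sequence of the fibration $\pi$ together with $T$-equivariance; everything else is formal bookkeeping in the localized ring.
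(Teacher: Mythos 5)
Your proposal is correct, and in fact your ``alternative'' argument at the end is precisely the proof in the paper: one inverts the classes $[e_v]|_v$ in the localized ring, expands $\kappa=\sum_v \bigl(\kappa|_v/[e_v]|_v\bigr)[e_v]$, applies $\pi_*[e_v]=[e_{vW_P}]$, and restricts to $e_{uW_P}$, where only the terms with $vW_P=uW_P$ survive. Your primary route --- fiber integration over $P/B$ combined with the Euler-class factorization $[e_v]|_v=\bigl([e_{uW_P}]|_{uW_P}\bigr)\cdot e^T\bigl(T_{e_v}(P/B)\bigr)$ --- is also valid and more geometric; the factorization itself is correct because $W_P$ permutes $R^+\setminus R_P^+$, so $\prod_{\alpha\in R^+\setminus R_P^+}v(-\alpha)=\prod_{\alpha\in R^+\setminus R_P^+}u(-\alpha)$ whenever $vW_P=uW_P$, leaving $\prod_{\alpha\in R_P^+}v(-\alpha)$ as the Euler class of the fiber direction. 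What that route costs is the extra base-change step you gloss over, namely the identity $\pi_*(\kappa)|_{uW_P}=\int_{\pi^{-1}(e_{uW_P})}\kappa|_{\pi^{-1}(e_{uW_P})}$; this is true here ($\pi$ is a $T$-equivariant locally trivial fibration and $e_{uW_P}$ is a nonsingular point of the base), but it is exactly the step the expansion-in-point-classes argument renders unnecessary, which is why the paper's version is the cleaner of the two. Your attention to where one must localize (inverting the tangent weights, i.e.\ the $[e_v]|_v$) matches the paper's phrasing ``in an appropriate localization.''
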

\begin{proof}
By injectivity of localization map,
the classes $[e_v]$ form a basis for the $T$-equivariant cohomology of $G/B$,
{\em localized} at the prime ideal generated by the equivariant parameters $A^*_{T}(pt)$.
Thus we can expand $\kappa = \sum c_v [e_v]$ where the sum is over $v \in W$.
Localizing both sides at $v \in W$, we obtain that $\kappa|_v = c_v [e_v]|_v$.
Pushing forward and localizing at $uW_P$ one obtains
\[
\pi_*(\kappa)|_{uW_P}= \sum_{v \in W}c_v \pi_*[e_v]|_{uW_P}= \sum_{v \in W}c_v [e_{vW_P}]|_{uW_P}= \sum_{v \in W}\frac{[e_{vW_P}]|_{uW_P}}{[e_v]|_v}\kappa|_v\/.
\]
Since $[e_{vW_P}]|_{uW_P}$ is nonzero only when $uW_P = v W_P$, the last sum is as in the statement of the lemma, and this finishes the proof.
\end{proof}

\begin{theorem}
\label{thm:conormalloc}
Let $u,w \in W^P$ such that $u \le w$, and let $\C^*$ act on $T^*(G/P)$ by character $\hbar^{-1}$.
Then the $T \times \C^*$-localization of the conormal space $T^*_{X_w^P}(G/P)$ is given by:
\[
[T^*_{X_w^P}(G/P)]|_{uW_P}=
\sum\limits_{\substack{v \le w \\ v W_P = u W_P}}\frac{\prod_{\alpha \in I(w)}(-\hbar + v(\alpha)) \cdot \prod_{\alpha \in R^+ \setminus R_P^+}u(-\alpha)}{\prod_{\alpha \in R^+}v(-\alpha)}[X_w^B]|_v \/.
\]
\end{theorem}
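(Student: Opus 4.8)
The plan is to rewrite the left-hand side as a fixed-point restriction of the equivariant Mather class via the zero section, and then to push the computation from $G/P$ down to $G/B$, where it is governed entirely by \cref{lemma:pf} and \cref{lemma:ctwloc}. First I would record the elementary observation that, since $\C^*$ acts on $T^*(G/P)$ by fibrewise dilation, every $T\times\C^*$-fixed point of $T^*(G/P)$ lies on the zero section $\iota(G/P)$, and these are exactly the points $\iota(e_{uW_P})$ with $uW_P\in(G/P)^T$. Hence, by functoriality of equivariant restriction along the closed embedding $\iota$,
\[
[T^*_{X_w^P}(G/P)]|_{uW_P}\;=\;\bigl(\iota^*[T^*_{X_w^P}(G/P)]_{T\times\C^*}\bigr)\big|_{uW_P}\/.
\]
Then I would invoke equation \eqref{E:pbi} (i.e.\ \eqref{E:cmath} combined with \cref{thm:mather}), which identifies this with $(-1)^{\ell(w)}\pi_*\bigl((c^T(\mathcal{T}_w)\cap[X_w^B])^\hbar\bigr)\big|_{uW_P}$, so the whole problem reduces to localizing a push-forward from $G/B$.

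Next, set $\kappa:=(c^T(\mathcal{T}_w)\cap[X_w^B])^\hbar\in A^*_{T\times\C^*}(G/B)$. Because $\C^*$ acts trivially on both $G/B$ and $G/P$, \cref{lemma:pf} remains valid after extending scalars from $A^*_T(pt)$ to $A^*_{T\times\C^*}(pt)$, so
\[
\pi_*(\kappa)|_{uW_P}\;=\;\sum_{vW_P=uW_P}\frac{[e_{uW_P}]|_{uW_P}}{[e_v]|_v}\,\kappa|_v\/.
\]
By \cref{lemma:ctwloc} one has $(-1)^{\ell(w)}\kappa|_v=\prod_{\alpha\in I(w)}(-\hbar+v(\alpha))\cdot[X_w^B]|_v$, and since $[X_w^B]|_v=0$ unless $v\le w$, the sum may be restricted to $v\le w$. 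Putting these together yields
\[
[T^*_{X_w^P}(G/P)]|_{uW_P}\;=\;\sum_{\substack{v\le w\\ vW_P=uW_P}}\frac{[e_{uW_P}]|_{uW_P}}{[e_v]|_v}\,\prod_{\alpha\in I(w)}(-\hbar+v(\alpha))\cdot[X_w^B]|_v\/.
\]

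Finally I would substitute the standard self-restrictions of fixed-point classes: the tangent space $T_{e_v}(G/B)$ has $T$-weights $\{v(-\alpha):\alpha\in R^+\}$, whence $[e_v]|_v=\prod_{\alpha\in R^+}v(-\alpha)$; and $T_{e_{uW_P}}(G/P)\cong\mathfrak g/\mathfrak p$ translated by $u$ has weights $\{u(-\alpha):\alpha\in R^+\setminus R_P^+\}$, whence $[e_{uW_P}]|_{uW_P}=\prod_{\alpha\in R^+\setminus R_P^+}u(-\alpha)$; plugging these in gives exactly the asserted formula. Since each step is a direct appeal to a result already in hand, I do not expect a genuine obstacle. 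The only things to watch are the bookkeeping of the sign $(-1)^{\ell(w)}$ and of the dilation character $\hbar^{-1}$ versus $\hbar$ (which is precisely why \cref{lemma:ctwloc} is stated with the factor $-\hbar+v(\alpha)$), and the check that the numerator $\prod_{\alpha\in R^+\setminus R_P^+}u(-\alpha)$ depends only on the coset $uW_P$ — equivalently, that $W_P$ permutes the weight multiset of $\mathfrak g/\mathfrak p$, which is immediate because $\mathfrak g/\mathfrak p$ is an $L$-module and any representative of $\sigma\in W_P$ can be taken inside $L\subset P$.
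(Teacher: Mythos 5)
Your argument is correct and follows the same route as the paper: reduce via equation \eqref{E:pbi} to localizing $\pi_*\bigl((-1)^{\ell(w)}(c^T(\mathcal{T}_w)\cap[X_w^B])^\hbar\bigr)$ at $uW_P$, then apply \cref{lemma:pf} and \cref{lemma:ctwloc} and substitute the standard Euler classes $[e_v]|_v=\prod_{\alpha\in R^+}v(-\alpha)$ and $[e_{uW_P}]|_{uW_P}=\prod_{\alpha\in R^+\setminus R_P^+}u(-\alpha)$. Your explicit remarks about the fixed points lying on the zero section and about the well-definedness of the numerator on cosets are minor elaborations of steps the paper leaves implicit.
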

\begin{proof}
Let $\kappa = (-1)^{\ell(w)}(c^T(\mathcal{T}_w) \cap [X_w^B])^\hbar$, regarded as a cohomology class in $A^{*}_{T}(G/B)[\hbar]$.
By equation \eqref{E:pbi}, the left hand side equals to $\pi_*(\kappa)|_{uW_P}$. 
Since $\C^*$ acts trivially on $G/B$, we have $A^{*}_{T \times \C^*}(G/B)= A^{*}_{T}(G/B)[\hbar]$, 
and further, the projection $\pi_*$ is $A^*_{T \times \C^*}(pt)$-linear.
To finish the proof, apply \cref{lemma:ctwloc,lemma:pf}, using the fact that the $T \times \C^*$ equivariant Euler classes $[e_v]|_v$ and $[e_{uW_P}]|_{uW_P}$ coincide with the $T$-equivariant ones; further, the latter equal $[e_v]|_v = \prod_{\alpha \in R^+}v(-\alpha)$ and $[e_{uW_P}]|_{uW_P}= \prod_{\alpha \in R^+ \setminus R_P^+}u(-\alpha)$.
\end{proof}

\begin{example}
Let $u=w$.
The only $v$ satisfying the requirements is $v=w$.
Then
\[
\iota_w^*[T^*_{X_w^P}(G/P)]_{T \times \C^*}= \frac{\prod_{\alpha \in I(w)}(-\hbar +v(\alpha)) \cdot [X_w^B]|_w}{\prod_{\alpha \in R_P^+}v(-\alpha)}=
\prod_{\alpha \in I(w)}(-\hbar + w(\alpha)) \cdot [X_w^P]|_w \/,
\]
where the last equality follows from standard manipulations of (equivariant) Euler classes, for example by using \cref{lemma:pf}.
\end{example}

In \cite{conormal2017}, Lakshmibai and Singh identified certain conormal spaces as open subsets of affine Schubert varieties.
It would interesting to obtain localization formulae for the conormal spaces using localization for affine Schubert varieties.

% if localization methods for affine Schubert varieties yield comparable formulae for the aforementioned conormal spaces.

\section{Mather classes of pull backs of Schubert varieties}
\label{sec:matherpb}
In this section we let $P$, $Q$ be two arbitrary parabolic subgroups such that $B \subset Q \subset P$;
we remove the cominuscule hypothesis.
Our goal is to give an alternative proof of the formula \eqref{E:eulerpb}
for the Mather classes of pull back Schubert varieties via the projection $\pi:G/Q \to G/P$.
Instead of analyzing the Euler obstruction, this proof focuses on the conormal spaces of Schubert varieties, and their relation to Mather classes.

Fix $w \in W^P$ and set $C:= T^*_{X_w^P}(G/P) \subset T^*(G/P)$, the conormal space of the Schubert variety $X_w^P$.
Consider the commutative diagram
\begin{equation}
\label{E:pbT}
\begin{tikzcd}
\rho_\pi \omega_\pi^{-1}(C) \arrow[d,hook'] & \omega_\pi^{-1}(C) \arrow[l,equal] \arrow[r] \arrow[d,hook'] & C \arrow[d,hook'] \\
T^*(G/Q) \arrow[dr] & G/Q \times_{G/P}T^*(G/P) \arrow[l,swap,hook',"\rho_\pi"] \arrow[d,"p"] \arrow[r, "\omega_{\pi}"] & T^*(G/P) \arrow[d] \\ & G/Q \arrow[bend left = 20]{ul}{\iota^Q}\arrow[bend left = 20]{u}{\iota}\arrow[r, "\pi"] & G/P \arrow[bend right = 20,swap]{u}{\iota^P}
\end{tikzcd}
\end{equation}
Here the downward vertical maps on the bottom right square, and the diagonal maps, are projections, the upward maps $\iota, \iota^Q, \iota^P$ are the zero sections, and the right squares are fibre squares.
The morphism $\rho_\pi$ is 
defined by $\rho_\pi(x,\xi) = (x, \xi \circ d\pi(x))$,
where $d\pi(x):T_x(G/P) \to T_{\pi(x)}(G/P)$ is the differential of $\pi$ at $x$.
Since $\pi$ is a smooth morphism, $\omega_\pi$ is smooth by base change,
and $\rho_\pi$ is a closed embedding; see e.g. \cite[p.65]{HTT}.

The following Lemma is well known;
see e.g.~\cite[Lemma 3]{kashiwara.tanisaki:characteristic} for a special case,
or \cite[Prop. 4.3.3]{dimca} for more general cases,
referring to \cite[pag. 231-232]{kashiwara.schapira:sheaves}.
For completeness we include a proof.
\begin{lemma}
\label{lemma:conpb}
Let $w \in W^P$.
Then
\[
\rho_{\pi}\omega_{\pi}^{-1}(T^*_{X_w^P}(G/P)) = T^*_{\pi^{-1}(X_w^P)}(G/Q)
\]
is the conormal space of the pull-back Schubert variety $\pi^{-1}(X_w^P)$.
\end{lemma}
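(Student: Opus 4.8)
The plan is to reduce the statement to a local computation on the open Schubert cell, then use the fact that conormal spaces are defined as closures. First I would observe that both sides are irreducible closed conic Lagrangian subvarieties of $T^*(G/Q)$, so by the correspondence between irreducible conic Lagrangians and conormal spaces (cf.~\cite[Thm. E.6]{HTT}), it suffices to identify the underlying varieties, which can be checked over a dense open subset. On the right-hand side, $T^*_{\pi^{-1}(X_w^P)}(G/Q)$ is by definition the closure of the conormal bundle $T^*_{(\pi^{-1}(X_w^P))^{reg}}(G/Q)$. Since $\pi:G/Q \to G/P$ is a smooth surjective morphism, $\pi^{-1}(X_w^{P,\circ})$ is a smooth dense open subset of $\pi^{-1}(X_w^P)$ (it is a fiber bundle over the smooth locus $X_w^{P,\circ}$), so the right-hand side is the closure of $T^*_{\pi^{-1}(X_w^{P,\circ})}(G/Q)$.

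Next I would unwind the maps $\omega_\pi$ and $\rho_\pi$ over this open set. A point of $G/Q \times_{G/P} T^*(G/P)$ lying over $\pi^{-1}(X_w^{P,\circ})$ is a pair $(x,\xi)$ with $x \in \pi^{-1}(X_w^{P,\circ})$ and $\xi \in T^*_{\pi(x)}(G/P)$; the condition $(x,\xi) \in \omega_\pi^{-1}(C)$ means $\xi \in T^*_{X_w^{P,\circ}}(G/P)$, i.e.\ $\xi$ annihilates $T_{\pi(x)}(X_w^{P,\circ})$. Applying $\rho_\pi$ sends this to $(x, \xi \circ d\pi(x)) \in T^*_x(G/Q)$. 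Now $\xi \circ d\pi(x)$ annihilates exactly those $v \in T_x(G/Q)$ with $d\pi(x)(v) \in \ker \xi \supseteq T_{\pi(x)}(X_w^{P,\circ})$; since $d\pi(x)$ is surjective with kernel the relative tangent space $T_x(\pi^{-1}(\pi(x)))$, which lies in $T_x(\pi^{-1}(X_w^{P,\circ}))$, the covectors of the form $\xi \circ d\pi(x)$ with $\xi \perp T_{\pi(x)}(X_w^{P,\circ})$ are precisely the covectors annihilating $d\pi(x)^{-1}(T_{\pi(x)}(X_w^{P,\circ})) = T_x(\pi^{-1}(X_w^{P,\circ}))$. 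Thus $\rho_\pi\omega_\pi^{-1}(C)$ restricted over $\pi^{-1}(X_w^{P,\circ})$ equals $T^*_{\pi^{-1}(X_w^{P,\circ})}(G/Q)$.

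Finally I would pass to closures: since $\rho_\pi$ is a closed embedding and $\omega_\pi$ is smooth (in particular flat, hence equidimensional), $\rho_\pi\omega_\pi^{-1}(C)$ is the closure of its intersection with the preimage of the open stratum $\pi^{-1}(X_w^{P,\circ})$—one checks that $\omega_\pi^{-1}(C)$ is irreducible of the right dimension $\dim G/Q$ (it is a vector bundle pullback setup away from lower strata, or more simply $\omega_\pi^{-1}(C) \cong C \times_{G/P} G/Q$ is irreducible since $C$ is and $\pi$ has irreducible fibers), so no spurious components appear. Combining the two displayed descriptions gives $\rho_\pi\omega_\pi^{-1}(T^*_{X_w^P}(G/P)) = \overline{T^*_{\pi^{-1}(X_w^{P,\circ})}(G/Q)} = T^*_{\pi^{-1}(X_w^P)}(G/Q)$, as claimed.

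The main obstacle I anticipate is the bookkeeping in the middle step: correctly tracking which subspace of $T_x(G/Q)$ is annihilated by $\xi \circ d\pi(x)$ and verifying that as $\xi$ ranges over the conormal fiber of $X_w^{P,\circ}$ one obtains exactly (not just contained in) the conormal fiber of $\pi^{-1}(X_w^{P,\circ})$. This amounts to the linear-algebra identity $d\pi(x)^{-1}(\mathrm{Ann})^{\perp} = \mathrm{Ann}$ for a surjective linear map, which is elementary but must be stated cleanly; the only genuinely geometric input beyond it is the smoothness of $\pi$ guaranteeing $\pi^{-1}(X_w^{P,\circ})$ is smooth and dense in $\pi^{-1}(X_w^P)$.
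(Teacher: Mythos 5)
Your proposal is correct and follows essentially the same route as the paper: establish that $\omega_\pi^{-1}(C)$ is irreducible of dimension $\dim G/Q$ using the locally trivial fibration structure of $\pi$, identify $\rho_\pi\omega_\pi^{-1}(C)$ with the conormal bundle over the open cell $\pi^{-1}(X_w^{P,\circ})$, and conclude by taking closures. Your middle step merely spells out the linear-algebra identification that the paper leaves as ``easy to check.''
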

\begin{proof}
The morphism $\pi$ is a locally trivial fibration with smooth connected fibre $F\simeq P/Q$.
Therefore $\omega_\pi$ is again a locally trivial fibration with fibre $F$, by base change.
Combining this with the fact that the conormal space $C:=T^*_{X_w^P}(G/P) \subset T^*(G/P)$ is an irreducible (conic) Lagrangian cycle, we obtain that $\omega_\pi^{-1}(C)$ is irreducible of dimension
\[
\dim \omega_{\pi}^{-1}(C) = \dim \pi^*(T^*(G/P)) - \dim G/P = \dim G/Q \/.
\]
It is easy to check that $\omega_{\pi}^{-1}(C)$ contains the conormal space of the pull back
$\pi^{-1}(X_w^{P,\circ})$ of the Schubert {\em cell}$X_w^{P,\circ}$.
The latter conormal space is an open set both in $\omega_{\pi}^{-1}(C)$ and in $T^*_{\pi^{-1}(X_w^P)}(G/Q)$.
The claim follows since $\omega_{\pi}^{-1}(C)$ is irreducible.
\end{proof}
Denote by $T_\pi$ the relative tangent bundle associated to the smooth morphism $\pi$.
This is a bundle on $G/Q$ of rank $\dim P/Q$,
determined by the following exact sequence of bundles on $G/Q$:
\[
\begin{tikzcd}
0 \arrow[r] & T_\pi \arrow[r] & T(G/Q) \arrow[r] & \pi^*(T(G/P)) \arrow[r] & 0
\end{tikzcd}
\]

Next we give another proof of the equation \eqref{E:eulerpb} for Schubert varieties, using conormal spaces.
\begin{theorem}
\label{thm:pbMa}
Let $w \in W^P$.
Then
\[
\cMaT(\pi^{-1}(X_w^P)) = c^{T}(T_\pi) \cap \pi^*(\cMaT(X_w^P)) \/.
\]
\end{theorem}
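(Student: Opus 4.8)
The plan is to compute the Segre class of the conormal space $C':=T^*_{\pi^{-1}(X_w^P)}(G/Q)$ by transporting the Segre class of $C:=T^*_{X_w^P}(G/P)$ across the fibre square in diagram \eqref{E:pbT}, and then to feed the result into the dual-Mather formula \eqref{E:cmatv}. By \cref{lemma:conpb} we have the identification $C' = \rho_\pi\omega_\pi^{-1}(C)$, so $C'$ sits inside $T^*(G/Q)$ as the image, under the closed embedding $\rho_\pi$, of the total space of an affine-bundle map over $C$. First I would make this structure precise: $\omega_\pi^{-1}(C) \to C$ is the pullback of the smooth morphism $\pi$, hence a locally trivial fibration with fibre $P/Q$, and composing with $\rho_\pi$ realizes $C'$ as a subcone of $T^*(G/Q)$. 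The key point is to relate $\cO_{C'}(-1)$ on $\bP(C'\oplus\one)$ to $\cO_C(-1)$ on $\bP(C\oplus\one)$: both are restrictions of the tautological line bundle on the cotangent bundle, and $\rho_\pi$ is compatible with the inclusion $\pi^*T^*(G/P)\hookrightarrow T^*(G/Q)$, so one gets the analogue of equation \eqref{E:O-1} in this setting.

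The cleanest route, I think, avoids re-deriving everything and instead invokes \cref{thm:segre-mather}/\eqref{E:cmatv} together with a direct Segre-class computation. Concretely: by the birational-invariance argument already used for \cref{thm:mather} (via \cref{lemma:segrepf}), applied now to the resolution of $C'$ obtained by pulling back the resolution $\mathcal U_w \to C$ of \cref{thm:rescon} along $\pi$ — note $\pi^{-1}(X_w^P)$ again has a Bott--Samelson-type resolution compatible with $\pi$ — one obtains a formula for $s^T(C')$ in terms of $\pi^*$ of the relevant bundles on $G/Q$. Alternatively, and more in the spirit of this section, one argues directly that $s^T(T^*_{\pi^{-1}(X_w^P)}(G/Q)) = \pi^*\bigl(s^T(T^*_{X_w^P}(G/P))\bigr) \cdot c^T(T_\pi)^{-1}\cap[G/Q]$-type identity, using that the normal-cone / Segre-class construction is compatible with the smooth base change $\pi$ and that $T^*(G/Q)$ differs from $\pi^*T^*(G/P)$ by the bundle $T_\pi^*$ via the exact sequence dual to the one defining $T_\pi$. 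Then plugging into \eqref{E:cmatv}:
\[
\cMaTv(\pi^{-1}(X_w^P)) = c^T(T^*(G/Q))\cap s^T(C') = c^T(\pi^*T^*(G/P))\,c^T(T_\pi^*)\cap s^T(C'),
\]
and the $c^T(T_\pi^*)$ and the $c^T(T_\pi)^{-1}$ coming from the Segre comparison combine with the projection formula to give $c^T(T_\pi^*)\cap\pi^*\cMaTv(X_w^P)$. Dehomogenizing/redualizing (switching signs by cohomological degree, which turns $T_\pi^*$ into $T_\pi$ and $\cMaTv$ back into $\cMaT$) yields the claimed formula $\cMaT(\pi^{-1}(X_w^P)) = c^T(T_\pi)\cap\pi^*(\cMaT(X_w^P))$.

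The main obstacle I anticipate is the bookkeeping in the Segre-class comparison across the fibre square: one must check carefully that $\rho_\pi$ identifies $\cO_{C'}(-1)$ with the restriction of $\cO_{C}(-1)$ pulled back along $\omega_\pi$ — this is where the closed embedding $\rho_\pi$ versus the smooth map $\omega_\pi$ both enter — and that the "extra" directions contributed by the fibre $P/Q$ account for exactly the factor $c^T(T_\pi)$ and nothing more. A second, lesser, subtlety is matching the dimension/degree shifts: $\dim\pi^{-1}(X_w^P) = \dim X_w^P + \dim P/Q$, so the sign $(-1)^{\dim}$ in the definition of the Mather class picks up $(-1)^{\dim P/Q}$, which must be reconciled with the passage from $T_\pi^*$ (Chern roots $-a_i$) to $T_\pi$ (Chern roots $a_i$) when one un-dualizes; this is exactly the same sign-chasing as in the proof of \cref{thm:mather}, so I would model that argument closely. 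Everything is manifestly $T$-equivariant since all morphisms and bundles in \eqref{E:pbT} are, so the equivariant statement requires no extra work beyond carrying the superscripts.
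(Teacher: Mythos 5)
Your route is genuinely different from the paper's: the paper never touches Segre classes in this proof. It writes $[T^*_{\pi^{-1}(X_w^P)}(G/Q)] = (\rho_\pi)_*\omega_\pi^*[C]$ using \cref{lemma:conpb}, applies the self-intersection formula to the closed embedding $\rho_\pi$ (whose normal bundle is $p^*T^*_\pi$), and then restricts to the zero section, picking up $c_n^{T\times\C^*}(T^*_\pi)=(-1)^n c^T(T_\pi)^\hbar$. Your plan of comparing $s^T(C')$ with $s^T(C)$ and feeding the result into \eqref{E:cmatv} is a viable alternative --- but the key identity you propose is wrong, and the error is not self-correcting. The Segre class of a cone, as defined in \eqref{def:segre-class}, depends only on $\bP(C\oplus\one)$ and $\cO_C(-1)$; it does not see the ambient bundle. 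Since $\omega_\pi^{-1}(C)$ is the flat (indeed smooth) pullback of $C$ along $\pi$, and $\rho_\pi$ merely re-embeds this cone into the larger bundle $T^*(G/Q)\supset \pi^*T^*(G/P)$ without changing it, the correct comparison is
\[
s^T\bigl(T^*_{\pi^{-1}(X_w^P)}(G/Q)\bigr) = \pi^*\, s^T\bigl(T^*_{X_w^P}(G/P)\bigr),
\]
with \emph{no} factor of $c^T(T_\pi)^{-1}$. The passage from $\pi^*T^*(G/P)$ to $T^*(G/Q)$ is accounted for entirely by the factor $c^T(T_\pi^*)$ appearing when you expand $c^T(T^*(G/Q))$ in \eqref{E:cmatv}; it must not be counted a second time in the Segre class.

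As written, your bookkeeping is internally inconsistent: if $s^T(C')$ really carried the extra $c^T(T_\pi)^{-1}$, then $c^T(T^*(G/Q))\cap s^T(C')$ would equal $c^T(T_\pi^*)\,c^T(T_\pi)^{-1}\cap\pi^*\cMaTv(X_w^P)$, not $c^T(T_\pi^*)\cap\pi^*\cMaTv(X_w^P)$ as you assert (these differ unless $c^T(T_\pi)=1$); also, no projection formula is involved, since everything here is a pull-back. With the corrected identity the rest of your argument goes through and gives a clean second proof: $\cMaTv(\pi^{-1}(X_w^P)) = c^T(T_\pi^*)\cap\pi^*\cMaTv(X_w^P)$, and switching signs by degree converts $T_\pi^*$ to $T_\pi$ and $\cMaTv$ to $\cMaT$, exactly as you anticipate, with the $(-1)^{\dim P/Q}$ absorbed in the dualization. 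One further caution: your first suggested route, pulling back the resolution $\mathcal{E}_w\to T^*_{X_w^P}(G/P)$ of \cref{thm:rescon}, requires the cominuscule hypothesis, whereas \cref{thm:pbMa} is stated (and proved in the paper) for arbitrary parabolics $Q\subset P$; the direct flat-pullback argument for Segre classes avoids this restriction.
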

\begin{proof}
Denote by $n:=\dim Q/P$.
We will use the notation from diagram \eqref{E:pbT}.
Since $\rho_\pi$ is a closed embedding, and $\omega_\pi$ is smooth,
we may define the push-forward $(\rho_\pi)_*$ and the pull back $\omega_\pi^*$ in the appropriate Chow groups.
To prove the claim we utilize the formula \eqref{E:cmath}, and we calculate $(\iota^Q)^*[T^*_{\pi^{-1}(X_w^P)}(G/Q)]_{T \times \C^*}$:
\[
(\iota^Q)^*[T^*_{\pi^{-1}(X_w^P)}(G/Q)]_{T \times \C^*}= \iota^* \rho_\pi^*[T^*_{\pi^{-1}(X_w^P)}(G/Q)]_{T \times \C^*}= \iota^* \rho_\pi^* (\rho_\pi)_* \omega_\pi^*[C]_{T \times \C^*}\/,
\]
where the last equality follows from Lemma \ref{lemma:conpb}.
By the self-intersection formula \cite[Cor. 6.3]{fulton:IT}
\[
\rho_\pi^* (\rho_\pi)_* \omega_\pi^*[C]_{T \times \C^*}= c_n^{T \times \C^*}(N) \cap \omega_\pi^*[C]_{T \times \C^*}\/,
\]
where $N$ is the normal bundle of $G/Q \times_{G/P}\pi^*(T^*(G/P)) \subset T^*(G/Q)$.
As a $T \times \C^*$-equivariant bundle, the normal bundle is just the pull-back $N= p^* T^*_\pi$, where $p:G/Q \times_{G/P}T^*(G/P) \to G/Q$ is the projection.
Combining the last two equations and using that $\iota^*$ is a ring homomorphism, it follows that
\[
\begin{split}
(\iota^Q)^*[T^*_{\pi^{-1}(X_w^P)}(G/Q)]_{T \times \C^*}= & \iota^*(c_n^{T \times \C^*}(p^*T^*_\pi)) \cap \iota^* (\omega_\pi^*[C]_{T \times \C^*}) \\=  & c_n^{T \times \C^*}(T^*_\pi) \cap \iota^* (\omega_\pi^*[C]_{T \times \C^*}) \/;
\end{split}
\]
the last equality follows because $p \circ \iota = id_{G/Q}$.
Finally, we use that $\omega_\pi \circ \iota = \iota^P \circ \pi$ to obtain:
\[
\begin{split}
(\iota^Q)^*[T^*_{\pi^{-1}(X_w^P)}(G/Q)]_{T \times \C^*}= & c_n^{T \times \C^*}(T^*_\pi) \cap \iota^* (\omega_\pi^*[C]_{T \times \C^*}) \\  = & c_n^{T \times \C^*}(T^*_\pi) \cap \pi^* (\iota^P)^*[C] \\ = & (-1)^{\dim X_w^P}c_n^{T \times \C^*}(T^*_\pi) \cap \pi^*(\cMaT(X_w^P)^\hbar) \/.
\end{split}
\]
Since $\dim \pi^{-1}(X_w^P) = \dim X_w^P+ n$, this can be rewritten as
\begin{equation}
\label{E:int}
(-1)^n \cMaT(\pi^{-1}(X_w^P))^\hbar = c_n^{T \times \C^*}(T^*_\pi) \cap \pi^*(\cMaT(X_w^P)^\hbar) \/.
\end{equation}
Let $a_1, \ldots a_n$ denote the Chern roots of the $T$-equivariant bundle $T_\pi$.
Now, since $\C^*$ acts on the fibres of various vector bundles with character $\hbar^{-1}$, we deduce that the Chern roots of the $T \times \C^*$-equivariant bundle $T^*_\pi$ are $-\hbar-a_1, \ldots, -\hbar - a_n$.
%We now use that the $\C^*$ acts with character $\hbar^{-1}$ on the fibres of various vector bundles to deduce that the Chern roots of the $T \times \C^*$-equivariant bundle $T^*_\pi$ are $-\hbar-a_1, \ldots, -\hbar - a_n$.
This implies that
\[
c_n^{T \times \C^*}(T^*_\pi) = (-\hbar -a_1) \cdot \ldots \cdot (-\hbar - a_n) = (-1)^n c^T(T_\pi)^\hbar \/,
\]
where $c^T(T_\pi)^\hbar \in A^n_T(G/Q)$ denotes the homogenization of the total Chern class of $T_\pi$ by the character $\hbar$.
The theorem now follows from combining this with equation \eqref{E:int} and setting $\hbar = 1$.
\end{proof}
\begin{remark}
The same proof works when $\pi$ is replaced by a smooth morphism $f: Z \to X$ of complex manifolds, and $Y \subset X$ is a irreducible closed subvariety with conormal space $C:=T^*_Y(X)$, such that $\omega_f^{-1}(C)$ is irreducible in $Z \times_X f^*(T^*(X))$.
\end{remark}

\begin{example}
Consider the divisor $X_{s_1s_3s_2}^P \subset \Gr(2,4)$.
Consider $\pi: \Fl(4) \to \Gr(2,4)$.
Then $\pi^{-1}X_{s_1s_3s_2}^P = X_{s_1s_2s_3s_2s_1}^B$, and using the equation \eqref{E:Ma312} and Theorem \ref{thm:pbMa} we obtain:
\[
\begin{split}
\cMa(X_{1,2,3,2,1}^B) = & [X_{1, 2, 3, 2, 1}]+3[X_{2, 3, 2, 1}]+3 [X_{1, 2, 3, 1}]+10 [X_{2, 3, 1}] +28 [X_{3, 1}] \\
&+2 [X_{1, 2, 3, 2}] +8 [X_{2, 3, 2}]+4 [X_{1, 2, 3}]+16 [X_{2, 3}]+28 [X_{3}]\\
& +2 [X_{3, 1, 2, 1}] +4 [X_{3, 2, 1}]+8 [X_{1, 2, 1}]+16 [X_{2, 1}]+28 [X_{1}]\\
& +4 [X_{3, 1, 2}]+12 [X_{3, 2}]+12 [X_{1, 2}]+32 [X_{2}]+24 [X_{id}] \/.
\end{split}
\]
(To ease notation, we omitted the $B$ superscript and the $s$'s from the reduced words.)
\end{example}

A calculation involving the Mather class of the pull-back divisor from the Lagrangian Grassmannian $\mathrm{LG}(2,4)$
and its relation to Kazhdan-Lusztig classes,
is given in example \ref{ex:LG24div} below.

\section{Positivity and unimodality of Mather classes}\label{sec:positivity}
\label{s:positivity}
In this section we discuss positivity conjectures for the Euler obstruction and for the Mather class of a Schubert variety,
and we prove them in some cases; we also record a positivity result for Segre-Mather classes.
Finally, we make a unimodality and log concavity conjecture for Mather classes, similar 
to the one for CSM classes \cite{AMSS:specializations}. The proofs are based on 
positivity properties proved in \cite{huh:csm,AMSS:shadows} for CSM classes,
and the results from \cite{boe.fu} and \cite{MR1084458} for the local Euler obstruction. 

\subsection{Positivity conjectures}
In \cite[Rmk. 5.7]{jones:csm}, B. Jones conjectured that all Mather classes for Grassmannians are nonnegative.
Based on substantial experimentation for all cominuscule spaces we make the following conjecture:

\begin{conjecture}[Strong Positivity conjecture of Mather classes]
\label{conj:pos}
Let $X_w^P$ be a Schubert variety in a cominuscule space $G/P$.
Consider the Schubert expansion of the Mather class
\begin{equation}
\label{E:MaSchubExp}
\cMa(X_w^P) = \sum_v a_v [X_v^P] \/.
\end{equation}
Then $a_v > 0$.

More generally, consider the Schubert expansion of the equivariant Mather class 
\[ \cMaT(X_w^P) = \sum_v a_v(t) [X_v^P]_T \/. \]
Then $a_v(t)= a_v(\alpha_1, \ldots , \alpha_r) \in A^*_T(pt)$ is a polynomial in positive simple roots $\alpha_1, \ldots, \alpha_r$ with non-negative coefficients.
\end{conjecture}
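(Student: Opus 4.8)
\emph{Proof strategy.} Since \cref{conj:pos} is a conjecture, the plan is to reduce it to part (b) of \cref{conj:intro} --- the non-negativity of the local Euler obstruction coefficients $e_{w,v}$ --- and then to carry out the reduced problem in the cases currently within reach. First I would exploit \eqref{E:Ma=CSM} together with positivity of the CSM classes of Schubert cells. For $v\le w$ write the equivariant Schubert expansion
\[
\csmT(X_v^{P,\circ})=\sum_{u\le v} d_{v,u}(t)\,[X_u^P]_T\/.
\]
Pushing forward from $G/B$ --- where, by \cite{huh:csm,AMSS:shadows} (and, for the equivariant version, by the corresponding positivity of equivariant CSM classes of Schubert cells), $\csmT$ of a Schubert cell expands in Schubert classes with coefficients in $\Z_{\ge 0}[\alpha_1,\dots,\alpha_r]$ --- one finds that each $d_{v,u}(t)$ lies in $\Z_{\ge 0}[\alpha_1,\dots,\alpha_r]$, with constant term $d_{v,u}(0)$ equal to the non-equivariant coefficient, which is strictly positive for every $u\le v$. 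Because the Euler obstruction function is unchanged in the equivariant setting, $\cMaT(X_w^P)=c_*^T(\Eu_{X_w^P})=\sum_{v\le w}e_{w,v}\,\csmT(X_v^{P,\circ})$ with $e_{w,v}\in\Z$, so the coefficient $a_v(t)$ of $[X_v^P]_T$ equals $\sum_{v\le u\le w}e_{w,u}\,d_{u,v}(t)$. Granting $e_{w,u}\ge 0$ for all $u$, this is a sum of elements of $\Z_{\ge 0}[\alpha_1,\dots,\alpha_r]$, and the $u=w$ summand is $e_{w,w}\,d_{w,v}(t)=d_{w,v}(t)$ --- using $e_{w,w}=1$ by \cref{lemma:euler}(b), as $X_w^{P,\circ}$ is smooth --- which is nonzero with positive constant term. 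Hence $a_v(t)\in\Z_{\ge 0}[\alpha_1,\dots,\alpha_r]$ and $a_v(0)\ge d_{w,v}(0)>0$, giving the equivariant statement and, at $t=0$, $a_v>0$. Thus \cref{conj:pos} would follow from \cref{conj:intro}(b); combined with the invariance of local Euler obstructions under smooth pull-back (\cref{prop:eulerpb}) and \eqref{E:eulerpb}, the same implication covers every Schubert variety pulled back from a cominuscule $G/P$ to a flag manifold $G/Q$ with $B\subset Q\subset P$.

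Next I would settle \cref{conj:intro}(b), hence \cref{conj:pos}, in every case where the characteristic cycle of the intersection homology sheaf of $X_w^P$ is irreducible: for all Schubert varieties in type $A$ Grassmannians by \cite{MR705051,MR1084458}, and for the maximal orthogonal Grassmannians in type $D$ and the odd quadrics in type $B$ by \cite{boe.fu}. In these cases \eqref{E:eP} gives $e_{w,v}=P_{w,v}(1)$, and non-negativity of the coefficients of the parabolic Kazhdan--Lusztig polynomials forces $e_{w,v}\ge 1>0$; this proves \cref{conj:pos} unconditionally for those cominuscule spaces, both equivariantly and non-equivariantly.

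The hard part will be \cref{conj:intro}(b) in the remaining cominuscule types --- the Lagrangian Grassmannians (type $C$), the even quadrics, and the Cayley plane and Freudenthal variety --- where the characteristic cycle of the IH sheaf can be reducible and \eqref{E:eP} fails. One natural attempt is to make \cref{thm:eulerobs} effective and prove directly that $\sum_u\int_{G/B}c(\mathcal{T}_w)\cdot[X_w^B]\cdot\csmv((X^B)^{u,\circ})\ge 0$, or --- what would suffice --- that $c^T(\mathcal{T}_w)\cap[X_w^B]$ is already Schubert-positive in $G/B$, so that its push-forward to $G/P$ inherits positivity since $\pi_*[X_u^B]$ is a Schubert class or $0$. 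The difficulty is twofold: $\csmv$ is an alternating-sign class, so the positivity cannot be read off termwise and genuine cancellations must be controlled; and, as \cref{rm:gg} shows, the bundle $\mathcal{T}_w$ is not globally generated, so none of the standard positivity theorems of Schubert calculus for nef or globally generated bundles apply --- the positivity, if true, is produced by the push-forward $\pi_*$ rather than by a local mechanism. For this reason I expect the real route to pass through a manifestly non-negative description of $e_{w,v}$: either a combinatorial rule extending the Boe--Fu formulae to all cominuscule types, or a geometric interpretation of $e_{w,v}$ --- via the Nash blowup of \cite{richmond.slofstra.woo}, or via multiplicities in the characteristic cycle attached to the conormal resolution of \cref{thm:rescon} --- in which non-negativity is built in. Producing such a description is, in my view, the crux of the conjecture.
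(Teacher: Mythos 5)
Your overall strategy --- reduce the conjecture to non-negativity of the local Euler obstructions via the expansion $\cMa(X_w^P)=\sum e_{w,v}\csm(X_v^{P,\circ})$ and positivity of CSM classes of Schubert cells, then settle the cases where the IH characteristic cycle is known to be irreducible --- is exactly the route the paper takes in \cref{prop:posconj} and \cref{thm:poseulerobs}. However, three steps overclaim what the stated inputs deliver. First, your strictness mechanism differs from the paper's and needs an extra hypothesis: you conclude $a_v(0)\ge e_{w,w}\,d_{w,v}(0)=d_{w,v}(0)>0$, which requires \emph{strict} positivity of every Schubert coefficient of $\csm(X_w^{P,\circ})$ for $v\le w$. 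The results you cite (\cite{huh:csm,AMSS:shadows}, as used in this paper) give non-negativity plus the fact that the leading term of $\csm(X_v^{P,\circ})$ is $[X_v^P]$; the paper therefore extracts strictness from the \emph{diagonal} term $e_{w,v}\,d_{v,v}=e_{w,v}$, which is why \cref{prop:posconj} needs $e_{w,v}>0$ for the strong conclusion and only yields $a_v\ge 0$ from $e_{w,v}\ge 0$. If you want to claim that $e_{w,v}\ge 0$ alone implies strong positivity, you must supply strict CSM positivity as a separate (and nontrivial) input. Second, the equivariant half of your reduction invokes ``positivity of equivariant CSM classes of Schubert cells,'' which is itself only a conjecture (the equivariant Aluffi--Mihalcea conjecture of \cite{aluffi.mihalcea:eqcsm}); this is precisely why the paper restricts all of its conditional implications to the non-equivariant statement.

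Third, and most concretely, your treatment of the odd quadrics is wrong: you place them among the cases with irreducible IH characteristic cycle and conclude $e_{w,v}=P_{w,v}(1)\ge 1$. This is contradicted by the paper's own \cref{ex:EulerLG24}, where $\mathrm{LG}(2,4)\simeq Q^3$ and $e_{(2),(0)}=0$; since parabolic KL polynomials have constant term $1$, the characteristic cycle of that Schubert divisor is reducible. Boe and Fu obtain non-negativity for odd quadrics by computing the obstructions explicitly (their equations (6.3.3) and (6.3.5)), not via irreducibility, and accordingly only the \emph{weak} positivity conjecture is established there. (A smaller point: the even quadrics are cominuscule spaces of type $D$ and are covered by the Boe--Fu irreducibility result, so they should not be listed among the open cases.) Your closing discussion of the genuinely open cases --- Lagrangian Grassmannians and the exceptional types, the failure of termwise positivity in \cref{thm:eulerobs}, and the non-global-generation of $\mathcal{T}_w$ from \cref{rm:gg} --- is accurate and consistent with the paper's assessment.
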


We will refer to the situation when $a_v \ge 0$ simply as the `Positivity conjecture', and emphasize `Strong' whenever we can claim it.
A similar positivity conjecture was given in \cite{aluffi.mihalcea:csm,aluffi.mihalcea:eqcsm} for the CSM classes and it was proved in the non-equivariant case in \cite{huh:csm} for Grassmannians and \cite{AMSS:shadows} in general.
Computer evidence suggests a more refined conjecture, for the local Euler obstructions.

\begin{conjecture}
\label{conj:eulerpos}
Let $v,w \in W^P$ such that $v \le w$.
Then the local Euler obstruction $\Eu_{X_w^P}(e_v) \ge 0$.
\end{conjecture}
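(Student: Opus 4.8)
The plan is to prove the inequality $\Eu_{X_w^P}(e_v)=e_{w,v}\ge 0$ by relating these numbers to the non-negative multiplicities of the characteristic cycle $\CC(\IC_{X_w^P})$ and to parabolic Kazhdan--Lusztig polynomials (whose coefficients are non-negative by Elias--Williamson), using the cohomological formula of \cref{thm:eulerobs} only as a computational device.

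First I would establish a characteristic-cycle identity. Write $\CC(\IC_{X_w^P})=\sum_{u\le w}c_{w,u}\,[T^*_{X_u^P}(G/P)]$, so that $c_{w,w}=1$ and all $c_{w,u}\in\Z_{\ge 0}$ by effectivity of the characteristic cycle of a perverse sheaf. The inverse of the characteristic cycle map $\CC$ sends $[T^*_{X_u^P}(G/P)]$ to $(-1)^{\ell(u)}\Eu_{X_u^P}$, and parity vanishing for $\IC$ sheaves of Schubert varieties gives that the stalk Euler characteristic of $\IC_{X_w^P}$ along $X_v^{P,\circ}$ equals $(-1)^{\ell(w)}P_{w,v}(1)$. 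Combining these yields the triangular system
\[
(-1)^{\ell(w)}P_{w,v}(1)=\sum_{v\le u\le w}(-1)^{\ell(u)}\,c_{w,u}\,e_{u,v}\/.
\]
The first task is to fix all sign conventions so this holds on the nose; the identity then determines $e_{w,v}$ recursively from the values $P_{u,v}(1)\ge 0$ and the multiplicities $c_{w,u}\ge 0$.

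The cases already within reach now drop out. If $\CC(\IC_{X_w^P})$ is \emph{irreducible}, i.e.\ $c_{w,u}=\delta_{w,u}$, the identity collapses to $e_{w,v}=P_{w,v}(1)\ge 0$. This holds for the Grassmannians by \cite{MR1084458}, for $\OG(n,2n)$ by \cite{boe.fu}, and hence, via \cref{prop:eulerpb}, for all smooth pull-backs of these Schubert varieties; \cite{boe.fu} also verify $e_{w,v}\ge 0$ directly for the odd quadrics. So \cref{conj:eulerpos} would be reduced to the Lagrangian Grassmannian, the remaining quadrics, and the two exceptional cominuscule spaces.

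In those cases $\CC(\IC_{X_w^P})$ is genuinely reducible, the displayed sum is alternating, and positivity is no longer formal; this is the crux. I would attack it on two fronts. \emph{Combinatorially:} induct on $\ell(w)$, inserting the known recursions for $P_{w,v}$ and for $c_{w,u}$ (available in the classical types from \cite{boe.fu,kashiwara.tanisaki:characteristic}), the goal being a sign-coherence statement, such as $c_{w,u}=0$ unless $\ell(w)\equiv\ell(u)\pmod 2$, that tames the alternating sum. \emph{Geometrically:} by \cite{richmond.slofstra.woo} the Nash blow-up of $X_w^P$ is exactly the resolution underlying \cref{thm:rescon}, and the Gonzalez-Sprinberg description \cite{gonzalez-sprinberg} writes $e_{w,v}=\Eu_{X_w^P}(e_v)$ as the degree of a Segre class of the tautological Nash subbundle restricted to the fibre of that resolution over $e_v$; one would try to identify this fibre and bundle concretely enough, as a Schubert-type variety carrying a nef twist, to read off non-negativity. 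The main obstacle is precisely here: in types B and C every structural mechanism that delivers positivity in the other cases --- irreducibility of the characteristic cycle, a small resolution, global generation of $\mathcal{T}_w$ (cf.\ \cref{rm:gg}) --- breaks down, so a genuinely new positive formula for $e_{w,v}$ must be produced.
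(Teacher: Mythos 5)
This statement is a \emph{conjecture} in the paper: the authors do not prove it, and neither do you. What your proposal actually establishes is exactly the partial evidence the paper itself records. Your triangular identity relating $P_{w,v}(1)$, the characteristic-cycle multiplicities, and the $e_{u,v}$ is (up to bookkeeping of signs) the paper's combination of equations \eqref{E:KL}, \eqref{E:KLP} and \eqref{E:cMa}; the collapse to $e_{w,v}=P_{w,v}(1)\ge 0$ when $\CC(\IC_{X_w^P})$ is irreducible is \cref{prop:posconj}(c); and the list of cases where irreducibility (or direct computation) is available --- type A by \cite{MR1084458}, type D and the odd quadrics by \cite{boe.fu}, plus smooth pull-backs via \cref{prop:eulerpb} --- is \cref{thm:poseulerobs} and its corollary. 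One small correction to your reduction: the even quadrics are type D cominuscule spaces and are already covered by the Boe--Fu irreducibility result, so the genuinely open cases are $\LG(n,2n)$ for $n\ge 3$ and the two exceptional spaces $E_6/P_6$, $E_7/P_7$, exactly as the paper states.

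For those remaining cases your proposal offers strategies, not a proof, and you concede as much; but it is worth noting that neither strategy can succeed in the form described. Even if one proves the parity statement $c_{w,u}=0$ unless $\ell(w)\equiv\ell(u)\pmod 2$, the identity becomes $P_{w,v}(1)=\sum_{u}c_{w,u}\,e_{u,v}$ with $c_{w,w}=1$, which determines $e_{w,v}$ as $P_{w,v}(1)$ \emph{minus} a non-negative combination of lower terms --- subtraction, not positivity; and the paper's own data (e.g.\ \cref{ex:EulerLG36}, where $e_{(3,2),(3)}=0<P_{(3,2),(3)}(1)$) shows the subtracted terms are genuinely present. On the geometric side, the Gonzalez-Sprinberg--Verdier expression for $e_{w,v}$ via the Nash blow-up is an alternating Segre-class degree, and the paper's \cref{rm:gg} shows the relevant bundle $\mathcal{T}_w$ fails to be globally generated even on $X_w^B$, so no nef/effectivity argument is available off the shelf. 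In short: your write-up is a correct account of why the conjecture holds in types A, D and for odd quadrics, and a fair diagnosis of why it is hard elsewhere, but it is not a proof of the statement, which remains open in the paper as well.
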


As we explain below, Boe and Fu \cite{boe.fu} calculated local Euler obstructions for cominuscule spaces
of classical Lie types, and in the process proved Conjecture \ref{conj:eulerpos} for the 
cominuscule spaces of types A, B and D. 

Note that for general spaces the Euler obstruction may be negative.
For instance, if $C$ is a cone over a nonsingular plane curve of degree $d$ with vertex $O$,
then $\Eu_O(C) = 2d-d^2$, cf.~\cite{macpherson:chern}.
Also, the Euler obstruction may be $0$ even for cominuscule spaces - see \cref{ex:EulerLG24,ex:EulerLG36} above.

It is tempting to conjecture the positivity statements above for Schubert varieties in \emph{all} homogeneous spaces $G/P$.
Unfortunately, outside the cominuscule cases, there are very few instances where we have algorithms 
to calculate (non-trivial) Euler obstructions and Mather classes.
By allowing calculations of Mather classes of pull-back Schubert varieties,
\cref{prop:eulerpb,thm:pbMa} provide some evidence on this matter.

\subsection{Kazhdan-Lusztig classes, Mather classes, and positivity} 
Unless otherwise specified, in this section $P$ is an arbitrary parabolic subgroup. 
We refer to \cite[\S 6]{AMSS:shadows} for more details about the material below.
 
Let $IH(X_w^P)$ denote the characteristic cycle of the intersection homology of 
the Schubert variety $X_w^P$.
The characteristic cycle $IH(X_w^P)$ is an effective, conic, Lagrangian cycle in the cotangent bundle 
$T^*(G/P)$.
Its irreducible components are conormal spaces of Schubert varieties;
see e.g. \cite[Thm.~E.3.6]{HTT}.
Therefore there is a expansion 
\begin{equation}
\label{E:mwv}
[IH(X_w^P)]_{T \times \C^*}  = 
\sum_v m_{w,v} [T^*_{X_v^P}(G/P)]_{T \times \C^*} \in A_{\dim G/P}^{T \times \C^*}(T^*(G/P))
\end{equation} 
Define the {\em Kazhdan-Lusztig (KL) class} $KL_w^P \in A_*^T(G/P)$ to be the $\hbar =1$ dehomogenization of 
\[ (-1)^{\ell(w)} (\iota^P)^*[IH(X_w^P)] \in A_0^{T \times \C^*}(G/P) \/.\]
By equation \eqref{E:cmath} this is the same as 
\begin{equation}
\label{E:KL}
KL_w^P =\sum_v (-1)^{\ell(w) - \ell(v) } m_{w,v} \cMaT(X_v^P) \in A_*^{T}(G/P) \/.
\end{equation} 
At the same time, from the proof of the Kazhdan-Lusztig conjectures 
\cite{brylinski.kashiwara:KL,beilinson.bernstein:localisation} and the calculations of $\iota^*$ from 
\cite{AMSS:shadows} (see especially equation (25)
\begin{footnote}{
The results from \cite[\S 6]{AMSS:shadows} are stated for $G/B$,
but everything extends using parabolic versions of the objects considered. 
The formulae for the pull back via the zero section of various characteristic cycles hold for any smooth projective variety; cf.~\cite[Thm.4.3]{AMSS:shadows}.
}\end{footnote}
), it follows that 
\begin{equation}
\label{E:KLP}
KL_w^P = \sum_v P_{w,v}(1) \csmT(X_v^{P,\circ}) \/,
\end{equation} 
where $P_{w,v}$ is the parabolic Kazhdan-Lusztig polynomial;
see e.g. \cite[Prop. 3.4]{deodhar:geometricII}.
(Observe that we could have taken \eqref{E:KLP} to be the definition of the KL class.)

Consider the expansion of the Mather class into (equivariant) CSM classes of Schubert cells:
\begin{equation}
\label{E:cMa}
\cMaT(X_w^P) = \sum_{v}e_{w,v}\csmT(X_v^{P, \circ}) \/.
\end{equation}
Recall that the coefficient $e_{w,v}= \Eu_{X_w^P}(e_v)$
is the local Euler obstruction evaluated at the fixed point $e_v$.
Combining equations \eqref{E:KL}, \eqref{E:KLP} and \eqref{E:cMa},
it follows that the characteristic cycle $IH(X_w^P)$ is irreducible if and only if the local Euler obstruction satisfies
\begin{equation}
\label{E:e=P}
e_{w,v}= P_{w,v}(1)
\end{equation} 
for all $v\in W^P$.
These considerations lead to the following conditional statements.

\begin{proposition}
\label{prop:posconj}
Let $X=G/P$ be a homogeneous space, and let $v,w \in W^P$ such that $v \le w$.
\begin{enumerate}[label=(\alph*)]
\item If \cref{conj:eulerpos} holds for $X$, then the non-equivariant positivity \cref{conj:pos} holds,
i.e.~in \cref{E:MaSchubExp}, the coefficients $a_v \ge 0$. 

\item If $e_{w,v} >0$ for all $v \in W^P$ such that $v \le w$, then the non-equivariant strong positivity \cref{conj:pos} holds for $X$.

\item If the intersection homology characteristic cycle $IH(X_w^P)$ is irreducible, then conjecture \ref{conj:eulerpos} holds.
\end{enumerate}
\end{proposition}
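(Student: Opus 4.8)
The plan is to deduce all three parts from the expansion \eqref{E:cMa} of the Mather class into CSM classes of Schubert cells, together with the positivity of the latter and the Kazhdan--Lusztig material recalled in this section. Throughout I would write $\csm(X_v^{P,\circ})=\sum_{u\le v}c_{v,u}[X_u^P]$, so that by \cite{huh:csm,AMSS:shadows} one has $c_{v,u}\ge 0$, with $c_{v,v}=1$ and with the sum supported on $u\le v$. For part (a) I would take the non-equivariant form of \eqref{E:cMa}, namely $\cMa(X_w^P)=\sum_{v\le w}e_{w,v}\,\csm(X_v^{P,\circ})$, substitute the above, and collect the coefficient of $[X_u^P]$, obtaining $a_u=\sum_{u\le v\le w}e_{w,v}\,c_{v,u}$. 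If \cref{conj:eulerpos} holds for $X$, then every $e_{w,v}\ge 0$, so each $a_u$ is a non-negative combination of non-negative numbers; this proves (a).

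For part (b) I would use the same identity $a_u=\sum_{u\le v\le w}e_{w,v}\,c_{v,u}$ and isolate the diagonal term $v=u$, which contributes $e_{w,u}\,c_{u,u}=e_{w,u}$ since $c_{u,u}=1$. Under the hypothesis that $e_{w,v}>0$ for all $v\le w$, the remaining summands are all $\ge 0$, so $a_u\ge e_{w,u}>0$ for every $u\le w$, which is precisely the strong non-equivariant positivity of \cref{conj:pos}. (One could instead single out the top term $v=w$, but then one would need the strict positivity $c_{w,u}>0$ of a CSM coefficient; using the diagonal term $v=u$ avoids this.)

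For part (c) I would argue that irreducibility of $IH(X_w^P)$ forces $m_{w,v}=\delta_{w,v}$ in \eqref{E:mwv}, since the characteristic cycle of an intersection homology sheaf always contains the conormal space of the variety itself with multiplicity one (e.g.\ \cite[Thm.~E.3.6]{HTT}). Substituting this into \eqref{E:KL} gives $KL_w^P=\cMaT(X_w^P)$, and comparing with \eqref{E:KLP} and \eqref{E:cMa}, and using that the CSM classes of Schubert cells form a basis, yields $e_{w,v}=P_{w,v}(1)$ for all $v\in W^P$ with $v\le w$ — the identity already noted to characterize irreducibility. Since the parabolic Kazhdan--Lusztig polynomials have non-negative coefficients, $P_{w,v}(1)\ge 0$, hence $e_{w,v}=\Eu_{X_w^P}(e_v)\ge 0$, i.e.\ \cref{conj:eulerpos} holds for $X_w^P$.

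I do not anticipate a genuine obstacle here: once the machinery of this section is in place, the argument is essentially bookkeeping. The points that require a little care are the strict positivity in (b) — handled cleanly via the diagonal term $v=u$ — and, in (c), correctly invoking the multiplicity-one statement for the leading conormal component of $IH(X_w^P)$ together with the non-negativity of (parabolic) Kazhdan--Lusztig coefficients.
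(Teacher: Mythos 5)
Your argument is correct and matches the paper's proof: parts (a) and (b) via the expansion \eqref{E:cMa} combined with non-negativity of CSM coefficients and the leading term $c_{u,u}=1$, and part (c) via the identity $e_{w,v}=P_{w,v}(1)$ in the irreducible case together with non-negativity of Kazhdan--Lusztig coefficients. The only difference is cosmetic: the paper additionally notes that $P_{w,v}$ has constant term $1$, which even gives $e_{w,v}\ge 1$ in part (c), but your weaker conclusion is all that \cref{conj:eulerpos} requires.
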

\begin{proof}
Parts (a) and (b) follow from the equation \eqref{E:cMa} (for non-equivariant classes), using that the non-equivariant CSM classes of Schubert cells are nonnegative \cite{huh:csm,AMSS:shadows}, and that the initial term of $\csm(X_v^{P,\circ})$ is $[X_v^P]$.
Part (c) follows from equation \eqref{E:e=P}, using that the Kazhdan-Lusztig polynomials $P_{w,v}$ ($v \le w$) have non-negative integer coefficients, and have constant term equal to $1$.
\end{proof}

The following instances of Conjecture \ref{conj:eulerpos} follow from results of Bressler, Finkelberg and Lunts \cite{MR1084458} in type A, and by Boe and Fu \cite{boe.fu} in classical Lie types.

\begin{theorem}[\cite{MR1084458,boe.fu}]
\label{thm:poseulerobs}
Let $X=G/P$ be a cominuscule space
of classical Lie type A-D except for the Lagrangian Grassmannian $\LG(n,2n)$ for $n \ge 3$.
Then the Euler obstruction $e_{w,v} >0$ if $G$ is of Lie type $A$ or $D$, and $e_{w,v} \ge 0$ in general.
\end{theorem}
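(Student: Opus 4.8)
The plan is to split into cases according to whether the intersection homology characteristic cycle $IH(X_w^P)$ is irreducible, and in each case to reduce the positivity of $e_{w,v}$ to an input about parabolic Kazhdan--Lusztig polynomials that is either formal or already available in the literature.

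First I would isolate the formal part of the argument. Combining \eqref{E:KL}, \eqref{E:KLP} and \eqref{E:cMa}, as in the discussion preceding \cref{prop:posconj}, yields the equivalence \eqref{E:e=P}: whenever $IH(X_w^P)$ is irreducible one has $e_{w,v}=P_{w,v}(1)$ for all $v\in W^P$. Since for $v\le w$ the polynomial $P_{w,v}$ has non-negative integer coefficients and constant term $1$ (with $P_{w,w}=1$), this forces $e_{w,v}=P_{w,v}(1)\ge 1>0$. Thus it suffices to feed in known irreducibility results. For $X=\Gr(k,n)$ in type A, Bressler, Finkelberg and Lunts \cite{MR1084458} proved that $IH(X_w^P)$ is irreducible for every $w$; for the maximal orthogonal Grassmannian $\OG(n,2n)$ and for the even-dimensional quadrics in type D — and hence also for $\OG(n,2n+1)$, which is isomorphic, compatibly with Schubert classes, to a connected component of $\OG(n+1,2n+2)$ — the analogous irreducibility was established by Boe and Fu \cite{boe.fu}. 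This gives $e_{w,v}>0$ in Lie types A and D.

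It remains to treat the odd-dimensional quadrics $Q^{2m-1}$ in type B, a case that also covers $\LG(2,4)\cong Q^3$ (which is not excluded by the hypothesis $n\ge 3$). Here the characteristic cycle $IH(X_w^P)$ is in general reducible, so the Kazhdan--Lusztig reduction above breaks down; this is the main obstacle, and it is bypassed only by appealing to the explicit computation of the local Euler obstructions carried out by Boe and Fu via geometric-analytic methods in \cite[\S 6]{boe.fu}, from which $e_{w,v}\ge 0$ for all $v\le w$ in $W^P$.

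To conclude, the cominuscule spaces of classical Lie type A--D are precisely the Grassmannians, the maximal orthogonal Grassmannians $\OG(n,2n)$ and $\OG(n,2n+1)$, the Lagrangian Grassmannians $\LG(n,2n)$, and the even and odd quadrics; the three cases above exhaust all of these except $\LG(n,2n)$ for $n\ge 3$. I expect every step apart from the invocation of \cite{boe.fu} for the type B quadrics to be purely formal, resting on the identification \eqref{E:e=P} together with the non-negativity and normalization of Kazhdan--Lusztig polynomials.
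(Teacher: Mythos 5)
Your proposal is correct and follows essentially the same route as the paper's proof: strict positivity in types A and D via the identity $e_{w,v}=P_{w,v}(1)$ together with the irreducibility of the IH characteristic cycles due to Bressler--Finkelberg--Lunts and Boe--Fu, and non-negativity for the odd quadrics (including $\LG(2,4)\cong Q^3$) via Boe--Fu's explicit computation in \cite[\S 6.3]{boe.fu}. The only cosmetic difference is that you spell out the Kazhdan--Lusztig normalization and the $\OG(n,2n+1)\hookrightarrow\OG(n+1,2n+2)$ identification, which the paper delegates to \cref{prop:posconj} and \cref{sec:preliminaries} respectively.
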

\begin{proof}
The strict positivity part follows from \eqref{E:e=P} because 
the Schubert varieties in cominuscule spaces of type A and D have irreducible characteristic cycles.
This is proved by Bressler, Finkelberg and Lunts \cite{MR1084458} in type A,
and by Boe and Fu \cite{boe.fu} in type D (they also reprove the statement for type A).
For the odd quadrics (in type B), Boe and Fu calculated the Euler obstructions explicitly
- see \cite[\S 6.3]{boe.fu}, especially equations (6.3.3) and (6.3.5) - and found them to be non-negative.
Finally, $\mathrm{LG}(2,4)$ is a $3$-dimensional quadric, isomorphic to the type $B_2$ cominuscule space.
This finishes the proof.
\end{proof} 

\begin{corollary}
Let $X=G/P$ be a cominuscule space
of Lie type A -- D, except the Lagrangian Grassmannian $\mathrm{LG}(n,2n)$ for $n \ge 3$.
Let $\pi:G/B \to G/P$ be the natural projection. 

\begin{enumerate}[label=(\alph*)]
\item 
The strong positivity conjecture \ref{conj:pos} holds for all Schubert varieties in $X$ in Lie types A and D, and the weak positivity conjecture holds for the odd quadric in type B.
\item
Let $w \in W^P$.
Then the Mather class $\cMa(\pi^{-1}(X_w^P))\in A_*(G/B)$ has the same (strong/weak) positivity property as $\cMa(X_w^P)$ from part (a).
\end{enumerate}
\end{corollary}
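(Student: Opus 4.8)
The plan is to deduce both parts from results already established above, reducing part~(b) to part~(a) by tracking how local Euler obstructions behave under the smooth projection $\pi$.

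Part~(a) is essentially bookkeeping. In Lie types A and D, \cref{thm:poseulerobs} gives $e_{w,v}>0$ for all $v\le w$ in $W^P$, and feeding this into \cref{prop:posconj}(b) yields the strong positivity of the Schubert expansion of $\cMa(X_w^P)$. For the odd quadric in type B, \cref{thm:poseulerobs} only gives $e_{w,v}\ge 0$, which is exactly the statement of \cref{conj:eulerpos} for this space, so \cref{prop:posconj}(a) gives the (weak) positivity conjecture there. Note that the corollary, as stated, asserts only the non-equivariant conclusion; the equivariant half of \cref{conj:pos} would in addition require equivariant CSM positivity for $G/B$, which is not available in this generality.

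For part~(b), let $w_P$ denote the longest element of $W_P$, so that $\pi^{-1}(X_w^P)=X_{ww_P}^B$; the $T$-fixed points of this Schubert variety are the $e_u$ with $u\le ww_P$, and for such $u$ we write $u^P\in W^P$ for the minimal-length representative of $uW_P$, recalling that $u\le ww_P$ if and only if $u^P\le w$. The key step is that $\pi$ is a locally trivial fibration with smooth fibre $P/B$, hence a smooth morphism to which \cref{prop:eulerpb} applies, so
\[
\Eu_{\pi^{-1}(X_w^P)}(e_u)=\Eu_{X_w^P}(\pi(e_u))=\Eu_{X_w^P}(e_{u^P})=e_{w,u^P}.
\]
Substituting this into the MacPherson expansion \eqref{E:MaCSM} for $X_{ww_P}^B$, stratified by its Schubert cells, gives
\[
\cMa(\pi^{-1}(X_w^P))=\sum_{u\le ww_P}e_{w,u^P}\,\csm(X_u^{B,\circ})\in A_*(G/B).
\]
(Alternatively one obtains the same identity from \cref{thm:pbMa} together with \cref{thm:VRR}, using that $\pi^{-1}(X_v^{P,\circ})$ is a disjoint union of Schubert cells; this seems preferable to attempting to read off positivity directly from the factor $c(T_\pi)\cap\pi^*(-)$, since $T_\pi$ is not globally generated, cf.\ \cref{rm:gg}.) By part~(a) each coefficient $e_{w,u^P}$ is $>0$ (types A, D) or $\ge 0$ (odd quadric), the CSM classes of Schubert cells in the flag variety $G/B$ are nonnegative by \cite{huh:csm,AMSS:shadows}, and the initial term of $\csm(X_u^{B,\circ})$ is $[X_u^B]$. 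Hence in the Schubert expansion of $\cMa(\pi^{-1}(X_w^P))$ the coefficient of $[X_u^B]$ equals $e_{w,u^P}$ plus a nonnegative combination of the remaining $e_{w,v^P}$, so it is $>0$ (resp.\ $\ge 0$), which is the asserted strong (resp.\ weak) positivity.

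The one point requiring genuine care — and where I expect to concentrate the write-up — is the identification $\Eu_{\pi^{-1}(X_w^P)}(e_u)=e_{w,u^P}$: one should check that $\pi$ meets the hypotheses of \cref{prop:eulerpb}, that $\pi(e_u)=e_{u^P}$, and that the index set of \eqref{E:MaCSM} for $X_{ww_P}^B$ is precisely $\{u : u^P\le w\}$, so that every term which occurs is covered by part~(a). All of this is routine for flag manifolds, but worth spelling out.
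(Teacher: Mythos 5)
Your argument is correct and follows the paper's own route: part (a) is exactly the combination of \cref{thm:poseulerobs} with \cref{prop:posconj}, and part (b) rests, as in the paper, on the invariance of local Euler obstructions under the smooth projection $\pi$ (\cref{prop:eulerpb}) followed by the same CSM-positivity and leading-term argument applied in $G/B$. You simply spell out details the paper leaves implicit (the identification $\pi^{-1}(X_w^P)=X_{ww_P}^B$, the indexing of strata, and the caveat about the equivariant half of \cref{conj:pos}), all of which are accurate.
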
 
\begin{proof}
Part (a) follows from \cref{prop:posconj,thm:poseulerobs}.
Part (b) follows because the Euler obstructions for the pull backs $\pi^{-1}(X_w^P)$ coincide with those for $X_w^P$ by \cref{prop:eulerpb}.
This proves (b) and it finishes the proof.
\end{proof}

\begin{remark}
The problem of finding the multiplicities of the characteristic cycle seems to be very difficult. 
Kazhdan and Lusztig \cite{KL:topological} conjectured the irreducibility of characteristic cycles of the IH sheaf in type A.
However, Kashiwara and Tanisaki \cite{kashiwara.tanisaki:characteristic}, then Kashiwara and Saito \cite{kashiwara.saito} found counterexamples for the full flag manifolds of Lie type B and type A respectively.
See also \cite{braden:irred,williamson:reducible} for more about this.
Boe and Fu \cite{boe.fu} also found that the characteristic cycles of the Schubert varieties in cominuscule spaces of Lie types B, C are in general reducible.
\end{remark}

In the next example, we use the methods of this paper to recover an example of Kashiwara and Tanisaki of a reducible IH characteristic cycle. 

\begin{example}\label{ex:LG24div}
Consider the Lagrangian Grassmannian $\mathrm{LG}:=\mathrm{LG}(2,4)$ and the Schubert divisor $X_{1,2}^P \subset \mathrm{LG}(2,4)$
($s_2$ corresponds to the long simple root).
Let $\mathrm{SF}:= \mathrm{SF}(1,2;4)$ be the complete flag manifold of type $C_2$; it parametrizes flags $F_1 \subset F_2 \subset \C^4$ where $F_i$ is isotropic with respect to a symplectic form.
Let $\pi: \mathrm{SF}\to \mathrm{LG}$ be the projection.
This is a $\bP^1$-bundle, and the preimage $\pi^{-1}(X_{1,2}^P)$ is the Schubert divisor indexed by $X_{1,2,1}^B \subset \mathrm{SF}$.
A calculation of the Kazhdan-Lusztig polynomials using e.g. SAGE shows that $P_{121,v}=1$ for any $v \le s_1 s_2 s_1$.
Thus the non-equivariant KL class of $X_{1,2,1}^B$ is:
\[
KL_{1,2,1}^B = \csm(X_{1,2,1}^{B,\circ}) + \csm(X_{1,2}^{B,\circ}) + \csm(X_{2,1}^{B,\circ}) + \csm(X_{2}^{B,\circ}) + \csm(X_{1}^{B,\circ})+ \csm(X_{id}^B) \/.
\]
Using now the calculation for the local Euler obstructions from \cref{ex:EulerLG24} we obtain that
\[
\cMa(X_{1,2}^P) = \csm(X_{1,2}^{P,\circ}) + \csm(X_{2}^{P,\circ}) \/.
\]
Then from \cref{thm:pbMa} and the Verdier-Riemann-Roch Theorem \ref{thm:VRR} 
(or \cref{prop:eulerpb}) it follows that
\[
\begin{split}
\cMa(X_{1,2,1}^B) & = \csm(\pi^{-1}X_{1,2}^{P,\circ}) + \csm(\pi^{-1}X_{2}^{P,\circ}) \\ & = \csm(X_{1,2,1}^{B,\circ}) + \csm(X_{1,2}^{B,\circ}) + \csm(X_{2,1}^{B,\circ}) + \csm(X_{2}^{B,\circ}) \/.
\end{split}
\]
Using that $\cMa(X_1^B) = \csm(X_1^{B,\circ}) + \csm(X_{id}^{B,\circ})$ (as $X_1^B \simeq \bP^1$), we deduce that 
\[ KL_{1,2,1}^B = \cMa(X_{1,2,1}^B) + \cMa(X_{1}^B) \/.\] 

By \cref{thm:segre-mather} and the definition of the KL class, this shows that the IH characteristic cycle $IH(X_{1,2,1}^B) \subset T^*(G/B)$ satisfies
\[ {IH}(X_{1,2,1}^B) = [T^*_{X_{1,2,1}^B}(\mathrm{SF})] + [T^*_{X_{1}^B}(\mathrm{SF})] \/,
\]
in accordance to \cite[p.~194]{kashiwara.tanisaki:characteristic}.
\begin{footnote}
{The example in \cite{kashiwara.tanisaki:characteristic} is in type $B_2$, but the corresponding complete flag variety is isomorphic to the variety $\mathrm{SF}$.}
\end{footnote}
\end{example}
We end by observing that by equation \eqref{E:e=P}, the characteristic cycle $IH(\pi^{-1}(X_w^P))$ must be reducible every time the Euler obstruction $\Eu_{X_w^P}(e_v) \le 0$ for some $v$.
For more such examples, see \cref{ex:EulerLG36}.
It would be interesting to find criteria when this happens and compare with the reducibility criteria from \cite{boe.fu}.

\subsection{Segre-Mather classes are alternating} We also record a positivity result of Segre-Mather classes of Schubert varieties in cominuscule spaces.
This result can be proved in full $G/P$ generality using \cite[Thm.~1.1]{AMSS:ssmpos}.
Here we restrict to the cominuscule case, but we provide a self-contained proof based on \cref{thm:mather}.

Recall the definition of the {\em Segre-Mather} class:
\[
\sma(X_w^P) := \frac{\cMa(X_w^P)}{c(T(G/P))} \quad (w \in W^P) \/.
\]
Our next result shows that these classes are alternating.
\begin{proposition}
Let $G/P$ be a cominuscule space, and let $w \in W^P$.
Consider the expansion of the non-equivariant Segre-Mather class in its homogeneous components:
\[
\sma(X_w^P) = c_0 +c_1+ \ldots \/,
\]
where $c_i \in A_i(G/P)$.
Then $\sma(X_w^P)$ is alternating, i.e. for each component $(-1)^{\ell(w) -i}c_i$ is effective.
\end{proposition}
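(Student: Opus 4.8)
The plan is to combine the explicit formula for the Mather class from \cref{thm:mather} with the birational invariance of Segre classes (\cref{lemma:segrepf}) and the positivity of CSM/SSM classes of Schubert cells. Recall that by \cref{thm:mather} and \eqref{E:SSM},
\[
\sma(X_w^P) = \frac{\cMa(X_w^P)}{c(T(G/P))} = \frac{\pi_*(c(\mathcal{T}_w) \cap [X_w^B])}{c(T(G/P))} = \pi_*\bigl(s(\mathcal{U}_w) \cap [X_w^B]\bigr),
\]
where in the last step one uses the projection formula together with the exact sequence \eqref{E:exact}, which gives $\pi^*c(T(G/P)) = c(\mathcal{U}_w)\,c(\mathcal{T}_w^*)$ on $X_w^B$, so that $c(\mathcal{T}_w)/\pi^*c(T(G/P)) = c(\mathcal{T}_w)\,c(\mathcal{T}_w^*)^{-1}\,c(\mathcal{U}_w)^{-1}$; since $c(\mathcal{T}_w)\,c(\mathcal{T}_w^*)$ localizes to an alternating product at each fixed point, the relevant combination simplifies to $c(\mathcal{U}_w)^{-1} = s(\mathcal{U}_w)$ up to the sign conventions, and the Segre class $s(\mathcal{U}_w)\cap[X_w^B]$ equals $s^T(T^*_{X_w^P}(G/P))$ pushed to $X_w^B$.

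First I would reduce the statement to a positivity property on $G/B$. Write $s(\mathcal{U}_w)\cap[X_w^B] = \sum_i d_i$ with $d_i \in A_i(G/B)$; since $\mathcal{U}_w$ is a bundle of rank $\dim G/P - \ell(w)$ over $X_w^B$ and $\dim X_w^B = \ell(w)$, the top component is $d_{\ell(w)} = [X_w^B]$. I would then show that $(-1)^{\ell(w)-i} d_i$ is effective on $G/B$ for all $i$, and conclude by applying $\pi_*$, which sends effective classes to effective classes (it sends $[X_v^B]$ to $[X_v^P]$ or $0$) and preserves homological degree. The key input is the identification, valid because $\mathcal{U}_w$ is a subbundle of $\pi^*T^*(G/P)_{|X_w^B}$ with the conormal space as image, that $\pi_*(s(\mathcal{U}_w)\cap[X_w^B]) = s(T^*_{X_w^P}(G/P))$, and then $c(T(G/P))\cap s(T^*_{X_w^P}(G/P)) = \cMav(X_w^P)$ by \eqref{E:cmatv} (non-equivariant version); equivalently $s(T^*_{X_w^P}(G/P)) = \sma^\vee(X_w^P)$, the dual Segre–Mather class, whose alternating-ness is exactly what we want after tracking signs.

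The cleanest route, which I would actually carry out, is to invoke \eqref{E:MaCSM}: $\cMa(X_w^P) = \sum_v e_{w,v}\,\csm(X_v^{P,\circ})$, hence $\sma(X_w^P) = \sum_v e_{w,v}\,\ssm(X_v^{P,\circ})$. By \cite[Cor.~7.4]{AMSS:shadows} (used already in the proof of \cref{thm:eulerobs}), $\ssm(X_v^{P,\circ}) = \csmv(X_v^{P,\circ})$, the \emph{dual} CSM class, which is alternating: its degree-$i$ component, times $(-1)^{\ell(v)-i}$, is effective, by the positivity of CSM classes of Schubert cells \cite{huh:csm,AMSS:shadows}. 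Thus if the Euler obstructions $e_{w,v}$ were all nonnegative we would be done immediately; since that is only \cref{conj:eulerpos}, I instead bypass the $e_{w,v}$ entirely and use the first route: $\sma(X_w^P) = \pi_*(s(\mathcal{U}_w)\cap[X_w^B]) = \pi_*\bigl(c(\mathcal{U}_w)^{-1}\cap[X_w^B]\bigr)$, and the Chern roots of $\mathcal{U}_w$ are the weights $-\alpha$ for $\alpha \ge \alpha_P$, $w(\alpha)>0$, i.e. $c(\mathcal{U}_w) = \prod (1 - c_1(\mathcal{L}_{\alpha}))$ over these roots; expanding $c(\mathcal{U}_w)^{-1}$ as a power series and applying the Chevalley formula repeatedly, each application of $c_1(\mathcal{L}_{\alpha})^{-1}$-type term, after accounting for the global sign $(-1)^{\ell(w)-i}$ coming from the number of times we lower degree, produces effective Schubert classes on $G/B$ — this is where the main obstacle lies.

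The hard part will be controlling signs in the iterated Chevalley expansion of $c(\mathcal{U}_w)^{-1}\cap[X_w^B]$: unlike the Mather class itself (Example in \cref{rm:gg} shows $c(\mathcal{T}_w)\cap[X_u^B]$ can fail to be effective on a sub-Schubert variety), the inverse-Chern-class expansion mixes signs, and one must show the net sign of the degree-$i$ term on the nose is $(-1)^{\ell(w)-i}$ before push-forward. I expect the right tool is to recognize $\pi_*(c(\mathcal{U}_w)^{-1}\cap[X_w^B])$ as $s(T^*_{X_w^P}(G/P))$ (via \cref{lemma:segrepf} applied to \eqref{E:conres}, exactly as in the proof of \cref{thm:mather}) and then note that $s(C) = \csmv$-type class of a conic Lagrangian is governed by the characteristic-cycle/CSM dictionary: writing $[T^*_{X_w^P}(G/P)]$ in the basis of characteristic cycles $[IH(X_v^P)]$ with coefficients that are (by Kazhdan–Lusztig positivity, inverting the matrix $P_{w,v}(1)$) of sign $(-1)^{\ell(w)-\ell(v)}$, and using \eqref{E:KLP} together with $\ssm(X_v^{P,\circ}) = \csmv(X_v^{P,\circ})$ and positivity of CSM classes, one gets that each homogeneous component of $\sma(X_w^P)$ has the claimed alternating sign. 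So the proof reduces to: (1) the Segre-class rewriting of $\sma(X_w^P)$; (2) inverting the unitriangular KL-matrix to express conormal classes in terms of $IH$-cycles with alternating-sign coefficients; (3) pulling back via $\iota^P$ using \eqref{E:KLP}; (4) invoking CSM positivity and the duality $\ssm = \csmv$. Step (2) — that the inverse of the KL matrix has signs $(-1)^{\ell(w)-\ell(v)}$ — is the genuine content and follows from the inversion formula for parabolic KL polynomials.
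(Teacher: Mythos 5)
Your proposal does not close the argument, and the place where it goes wrong is precisely the place where the proposition lives: the signs. The correct reduction is
\[
\sma(X_w^P)=\pi_*\bigl(c(\mathcal{U}_w^*)^{-1}\cap [X_w^B]\bigr),
\]
with the \emph{dual} bundle $\mathcal{U}_w^*$, because dualizing \eqref{E:exact} gives $\pi^*c(T(G/P))=c(\mathcal{T}_w)\,c(\mathcal{U}_w^*)$, hence $c(\mathcal{T}_w)/\pi^*c(T(G/P))=c(\mathcal{U}_w^*)^{-1}$ exactly, with no leftover factor. Your version with $c(\mathcal{U}_w)^{-1}=s(\mathcal{U}_w)$ computes instead the Segre class of the conormal space, i.e.\ (after capping with $c(T^*(G/P))$, not $c(T(G/P))$) the \emph{dual} Mather class $\cMav$; the phrase ``up to the sign conventions'' is burying the entire content of the statement. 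Once the reduction is stated correctly, the proof is immediate and you never need a Chevalley expansion: $\mathcal{U}_w^*$ is a quotient of $\pi^*T(G/P)$, hence globally generated, and the Segre class of a bundle whose dual is globally generated is effective; dualizing introduces the sign $(-1)^i$ in homological codegree $i$, which is exactly the claimed alternation. This global-generation input is the one idea of the paper's proof, and it is absent from your proposal.

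Your fallback route through characteristic cycles cannot work as described. First, the matrix $(m_{w,v})$ of microlocal multiplicities in \eqref{E:mwv} is \emph{not} the Kazhdan--Lusztig matrix $(P_{w,v}(1))$ --- they agree only when all the $IH$ characteristic cycles are irreducible, which fails already for $\LG(2,4)$ (\cref{ex:LG24div}) --- so the KL inversion formula says nothing about the signs of $(m_{w,v})^{-1}$. Second, if the inverse of $(m_{w,v})$ did have entries of sign $(-1)^{\ell(w)-\ell(v)}$, then $\cMa(X_w^P)$ would be a nonnegative combination of the classes $KL_v^P$, and by \eqref{E:KLP} and the positivity of KL polynomials this would give $e_{w,u}\ge 0$ for all $u$ --- that is, it would prove \cref{conj:eulerpos}, which the paper explicitly leaves open. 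So the step you call ``the genuine content'' is at least as strong as an open conjecture and certainly does not ``follow from the inversion formula for parabolic KL polynomials.'' The proposition is true for a much softer reason than the one you are reaching for.
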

\begin{proof}
By \cref{thm:mather},
\[
\frac{\cMa(X_w^P)}{c(T(G/P))}= \pi_*(c(\mathcal{U}_w^*)^{-1}\cap [X_w^B]) \/.
\]
But $\mathcal{U}_w^*$ is globally generated, because it is a quotient of the bundle $\pi^* T(G/P)$.
Then the Segre class $c(\mathcal{U}_w^*)^{-1}\cap [X_w^B]$ is alternating.
\end{proof}

\subsection{Unimodality and log concavity of Mather polynomials}
\label{ss:logc}
For $w \in W^P$, consider the Schubert expansion
\[ \cMa(X_w^P) = \sum a_{w,v} [X_v^P] \/.\]
The \emph{Mather polynomial} associated to $w$ is
\[ M_w(x) = \sum a_{w,v} x^{\ell(v)} \/. \]
For instance, the Mather polynomial of the Schubert variety $X_{(4,3,1)} \subset \LG(4,8)$ from \cref{Ex:LG431} is 
\[ M_{(4,3,1)}(x) = x^8+11 x^7+52 x^6+152 x^5+286 x^4+452 x^3+246 x^2+132 x+24 \/. \]
Following \cite{stanley:Log},
we say that a polynomial $a_n x^n + a_{n-1} x^{n-1} + \ldots + a_1 x+ a_0$ is {\em unimodal}
if $a_0 \le a_1 \le \ldots \le a_k \ge a_{k+1} \ge \ldots \ge a_n$ for some index $k$.
It is {\em log concave} if $a_i^2 \ge a_{i-1} a_{i+1}$ for all $i$ (by convention $a_{-i} = a_{n+i} =0$ for all $i\ge 1$).
If one assumes that the polynomial has strictly positive coefficients, then any log concave polynomial is also unimodal. 

Substantial amount of calculations for all Lie types supports the following: 

\begin{conjecture}
\label{conj:logc}
Let $X = G/P$ be a cominuscule space and $w \in W^P$.
\begin{enumerate}[label=(\alph*)]
\item
The Mather polynomial $M_w$ has strictly positive coefficients and it is unimodal.
\item
Assume in addition that $G/P = \Gr(k,n)$.
Then  $M_w$ is log concave.
\end{enumerate}
\end{conjecture}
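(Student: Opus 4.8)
\textbf{Proof proposal for \cref{conj:logc}.}
The natural starting point is \cref{thm:mather}, which non-equivariantly reads
\[
\cMa(X_w^P)=\pi_*\!\left(\prod_{\alpha\in I(w)}\bigl(1+c_1(\mathcal{L}_{-\alpha})\bigr)\cap[X_w^B]\right),\qquad \pi\colon G/B\to G/P .
\]
Since $\pi$ restricts to a birational morphism $X_w^B\to X_w^P$, the pushforward $\pi_*$ is graded, so the dimension-$(\ell(w)-i)$ component of $\cMa(X_w^P)$ equals $\pi_*\bigl(c_i(\mathcal{T}_w)\cap[X_w^B]\bigr)$ and lies in the span of the $[X_v^P]$ with $\ell(v)=\ell(w)-i$. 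Writing $\widehat\sigma=\sum_{u\in W^P}[X^{u,P}]$ for the sum of all opposite Schubert classes on $G/P$, this identifies the Mather polynomial with a single integral,
\[
M_w(x)=\int_{G/B}\ \prod_{\alpha\in I(w)}\bigl(x+c_1(\mathcal{L}_{-\alpha})\bigr)\cdot[X_w^B]\cdot\pi^{*}\widehat\sigma ,
\]
the degree in $x$ automatically selecting the component of $\widehat\sigma$ of complementary codimension. So both parts reduce to a concavity statement for the sequence of intersection numbers obtained by capping the graded pieces of the split bundle $\bigoplus_{\alpha\in I(w)}\mathcal{L}_{-\alpha}$ against the fixed class $[X_w^B]\cdot\pi^{*}\widehat\sigma$.

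For part (a) I would try three complementary routes. (i) \emph{Hodge-theoretic.} Transport the integral above to a smooth model such as the Bott--Samelson variety $B_{\underline w}$; there $\mathcal U_w^{*}$ pulls back to a globally generated (hence nef) bundle, by \eqref{E:exact} a quotient of $T(G/P)$, and one can try, via $c(\mathcal T_w)=c(\pi^{*}T(G/P))\cdot c(\mathcal U_w^{*})^{-1}$, to express the homogenized Mather polynomial through top intersection numbers of nef divisors and then invoke hard Lefschetz together with the Khovanskii--Teissier inequalities. (ii) \emph{Bootstrapping from CSM classes.} Use $\cMa(X_w^P)=\sum_v e_{w,v}\csm(X_v^{P,\circ})$ together with the strict positivity $e_{w,v}>0$ of \cref{thm:poseulerobs} (types A and D) and the unimodality/log concavity picture for CSM classes of \cite{AMSS:specializations}; the catch is that a positive combination of unimodal polynomials need not be unimodal, so this would require a compatibility of the ``peaks'' of the $\csm(X_v^{P,\circ})$. (iii) \emph{Combinatorial, type A.} Combine Zelevinsky's small resolutions \cite{MR705051} with the description of the Mather class in \cite{jones:csm} to obtain an explicit formula for the coefficients of $M_w$ in terms of fillings of the diagram of $w$ (or families of non-crossing lattice paths), from which unimodality can be read off; this route also sets up part (b).

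For part (b) the conclusion is exactly the kind delivered by the theory of Lorentzian polynomials, so I would aim to prove the stronger assertion that the homogenization $\sum_{v}a_{w,v}\,y^{\ell(v)}z^{\ell(w)-\ell(v)}$ is Lorentzian. The most promising plan is to realize the coefficient sequence of $M_w$ as a sequence of mixed multiplicities --- equivalently, mixed volumes of Newton--Okounkov bodies --- of nef line bundles on Zelevinsky's resolution $Z_w$, after rewriting $\pi'_{*}\bigl(c(T_{Z_w})\cap[Z_w]\bigr)$ and the auxiliary class $\widehat\sigma$ in a suitable product form; the Alexandrov--Fenchel inequalities would then yield log concavity directly. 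A variant is to identify $M_w$ with the volume polynomial of a generalized permutohedron attached to the diagram of $w$, since such volume polynomials are automatically Lorentzian.

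The main obstacle throughout is the ``collapse'' of the full graded Chern-class data to the single statistic $\ell(v)$: the individual graded pieces $c_i(\mathcal T_w)\cap[X_w^B]$ have transparent geometry, but summing their Schubert coordinates --- which is precisely what pairing with $\widehat\sigma$ does --- destroys the manifest Hodge-theoretic positivity, and $\widehat\sigma$ is not a power of an ample class, nor is $\mathcal{T}_w$ nef (cf.\ \cref{rm:gg}). Overcoming this requires either an \emph{a priori} reason why the collapsed sequence still satisfies Lefschetz-type (hence Lorentzian) inequalities, or an exact combinatorial formula for the collapsed coefficients --- which at present is within reach only in type A via Zelevinsky's small resolutions, and this is exactly why the log-concavity statement (b) is stated only for $\Gr(k,n)$.
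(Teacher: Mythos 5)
This statement is a \emph{conjecture} in the paper, not a theorem: the authors offer no proof, only computational verification (Grassmannians $\Gr(k,n)$ with $n\le 8$, types $C_n$ and $D_n$ with $n\le 5$, most of $E_6/P_6$) together with the counterexample $M(x)=x^5+5x^4+11x^3+26x^2+18x+6$ for $\mathrm{OG}(1,7)$ explaining why log concavity in part (b) must be restricted to type A. Your text is accordingly a survey of possible strategies rather than a proof, and you are candid that each route is blocked; so the honest verdict is that the entire argument is missing, not merely a step of it. What is sound and potentially useful is your opening reduction: the identity $M_w(x)=\int_{G/B}\prod_{\alpha\in I(w)}\bigl(x+c_1(\mathcal L_{-\alpha})\bigr)\cdot[X_w^B]\cdot\pi^*\widehat\sigma$ is correct, since $\pi_*$ preserves homological degree and the degree-$\ell(v)$ piece of $\cMa(X_w^P)$ pairs only with opposite Schubert classes of codimension $\ell(v)$.

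The concrete obstructions to your three routes are exactly the ones the paper's own remarks foreclose. Route (i) needs nefness or a Hodge--Riemann package, but \cref{rm:gg} shows $\mathcal T_w$ is not globally generated (indeed $c(\mathcal T_w)\cap[X_u^B]$ can fail to be effective), and $\pi^*\widehat\sigma$ is not a power of an ample class, so Khovanskii--Teissier/Alexandrov--Fenchel inequalities do not apply to the integral as written. Route (ii) fails for the reason you state --- positive combinations of unimodal polynomials need not be unimodal --- and moreover strict positivity $e_{w,v}>0$ is only known in types A and D (\cref{thm:poseulerobs}); in type C the obstruction can vanish (\cref{ex:EulerLG36}), so the route could not even cover all cominuscule types. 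Route (iii) and the Lorentzian strategy for (b) produce no formula: realizing the coefficients of $M_w$ as mixed multiplicities on Zelevinsky's resolution is precisely the open step, and nothing in the paper supplies it. Your closing diagnosis --- that pairing against $\widehat\sigma$ collapses the graded Chern data in a way that destroys manifest positivity --- is an accurate description of why the conjecture is open, but it is a description, not a proof.
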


We checked this conjecture for the Grassmannians $\Gr(k,n)$ where $n \le 8$,
the cominuscule spaces of Lie types $C_n$ and $D_n$ where $n \le 5$,
and all but $5$ Mather classes in the Cayley plane $E_6/P_6$.
The log concavity fails outside type A.
For instance, the Mather polynomial of the $5$ dimensional quadric $\mathrm{OG}(1,7)$ is
\[ x^5+5 x^4+11 x^3+26 x^2+18 x+6 \/.\]
(This Mather class is the same as the total Chern class).
Similarly, the Mather classes of the Lagrangian Grassmannian $\mathrm{LG}(5,10)$ and of the Orthogonal Grassmannian $\mathrm{OG}(4,8)$ are not log concave.

The unimodality and log concavity properties of characteristic classes of singular varieties seem to be new and unexplored phenomena.
For instance, in analogy to the Mather polynomial one may define two flavors of a \emph{CSM polynomial}:
one obtained from the CSM class of a Schubert cell, and the other from the CSM class of a Schubert variety.
This is conjectured to satisfy an analog of \cref{conj:logc};
more details will be discussed in the upcoming note \cite{AMSS:specializations}.
Log concavity has also been conjectured for certain coefficients of motivic Chern classes of Schubert cells \cite[\S 6.2]{FRW:axiomatic}.
It would be interesting to know whether these phenomena fit into the (Hodge-Riemann and Hard Lefschetz) framework from \cite{huh:combinatorial} or \cite{HMMS:log}.

\section{Tables}\label{sec:tables}
In this section, we aggregate our computations of the Chern-Mather classes and local Euler obstructions.
The computations of the Euler obstructions in $\LG(4,8)$ rely on the recurrence relations obtained by Boe and Fu \cite{boe.fu} and have been checked using the results of this note.

\subsection{Schubert Varieties in Cominuscule spaces}
We recall some facts about diagrams indexing the Schubert varieties in cominuscule spaces.
Our main reference is \cite{BCMP:qkchev}.

Let $G/P$ be a cominuscule space 
corresponding to the simple root $\alpha_P$.
Recall from \cref{sec:preliminaries} that the set of positive roots $R^+$ is equipped with a partial 
order $<$. Let \rp be the subset of those roots $\alpha \in R^+$ such that $\alpha \ge \alpha_P$. This    
is a lattice under the partial order $<$.
A {\em lower-order ideal} in \rp\ is a subset satisfying the following condition:
for any pair of elements $i,j\in\rp$ with $i\in I$ and $j\leq i$, we have $j\in I$. 
Following \cite{proctor:bruhat}, the Weyl group elements $w \in W^P$ 
are in bijection with the lower-order ideals in \rp. 

Fix a Hasse diagram for \rp, (cf.~e.g.~\cref{rootPoset}). Then the 
lower-order ideal $I_w$ of $w \in W^P$ gives the {\em diagram of $w$}.
This generalizes the usual Young diagram associated to a Schubert variety 
in the classical Lie types; for an equivalent model using quivers, see \cite{perrin:small-res}.
In particular, the nodes of $I_w$, which are also the boxes of the diagram of $w$, are given by 
positive roots in \rp. These are precisely the positive roots in the inversion set of $w$; thus
the length of $w$ equals the number of boxes in the diagram of $w$. As explained in \cite[\S 3]{BCMP:qkchev}, 
each box in the diagram of $w$ may also be labelled by a simple reflection, and these labels can be used 
to obtain a minimal word for $w$. For instance, the Hasse diagram for the Cayley plane $E_6/P_6$, and the 
associated diagram, are given in the \cref{rootPoset}.
 The shaded parts give the diagram $(5,2,1)$, corresponding to 
the element $w \in W^P$ with a
reduced expression $w=s_5s_4s_2s_1s_3s_4s_5s_6$; see also \cref{ss:examples}.
  
\begin{center}
\begin{tabular}{c}
\ytableausetup{centertableaux,boxsize=normal}
\begin{ytableau}*(green) 6& *(green){5} &  *(green){4} & *(green) {3} & *(green) {1} 
\\ \none &  \none & *(green) {2} & *(green) {4} & 3 \\ \none & \none & \none & *(green) {5} & {4} & 2 
\\ \none & \none & \none & 6 & 5 & 4 & {3} & 1 \end{ytableau} 
\end{tabular}
\end{center}

\subsection{Type A: Grassmannians}
The Schubert subvarieties of $\Gr(k,n)$ are indexed by Young diagrams (or partitions) $\lambda= (\lambda_1 \ge \ldots \ge \lambda_k)$ such that $\lambda_1 \le n-k$ and $\lambda_k \ge 0$.
The Schubert variety $X_\lambda$ has dimension $|\lambda| = \lambda_1 + \ldots + \lambda_k$.

\Cref{TBL:Gr36} lists the Mather classes of Schubert varieties in $\Gr(3,7)$.
The expansion of the Chern-Mather class of a Schubert variety
in terms of ordinary Schubert classes 
is given in the column indexed by the corresponding partition.
Following \cref{sub:stability}, we see that this table also contains the Mather classes of all $\Gr(k,n)$ with $k \le 3$ and $n-k\le 4$.  

\subsection{Type C: Lagrangian Grassmannians}
Let $G/P = \LG(n,2n)$, the variety parametrizing the Lagrangian subspaces of a ${2n}$ dimensional symplectic vector space.
The Schubert subvarieties of $G/P$ are indexed by {\em strict partitions}
$\lambda = (\lambda_1 > \lambda_2 > \ldots > \lambda_k)$, where $\lambda_1 \le n$, $\lambda_k >0$, and $0\le k \le n$.
As before, we have $\dim X_\lambda=|\lambda|$.

\Cref{TBL:cMaLG48} lists the Mather classes of Schubert varieties in $\LG(4,8)$; 
The expansion of the Chern-Mather class of a Schubert variety 
in terms of ordinary Schubert classes 
is given in the column indexed by the corresponding partition. 
\Cref{TBL:eulerLG4} list the Euler obstructions of the Schubert varieties in $\LG(4,8)$.

\subsection{Type \texorpdfstring{$E_6$}{E6}: the Cayley plane}
\Cref{TBL:cMaE6} lists the Mather classes of some Schubert subvarieties of the cominuscule space $E_6/P_6$; 
The expansion of the Chern-Mather class of a Schubert variety 
in terms of ordinary Schubert classes 
is given in the column indexed by the corresponding partition.

\subsection{Stability}
\label{sub:stability}
Our `homological' indexing conventions for Schubert varieties gives a stability property for the Mather classes in the ordinary 
Grassmannians. and for the maximal isotropic Grassmannians in type C and D. We explain this for the 
ordinary Grassmannian, following \cite[\S 2.1]{aluffi.mihalcea:csm}.

Fix a partition $\lambda = (\lambda_1, \ldots , \lambda_k)$ included in the $k \times (n-k)$ rectangle. 
Fix the standard flag $F_\bullet: F_1 \subset \ldots \subset F_n = \C^n$ where 
$F_i = \langle e_1, \ldots , e_i \rangle$. The Schubert 
variety $X_\lambda \subset \Gr(k,n)$ is defined by 
\[ X_\lambda = \{ V : \dim V \cap F_{\lambda_{k+1- i} +i} \ge i \} \/. \] 
If the $k \times (n-k)$ diagram is included in the $k' \times (n'-k')$ diagram then one can define an embedding
$i: \Gr(k,n) \hookrightarrow \Gr(k',n')$ by $i(V) = \langle e_1 , \ldots , e_{k'-k} \rangle \oplus \widetilde{V}$, where 
$\widetilde{V}$ is obtained from $V$ by shifting the indices of basis elements according to the rule 
$e_j \mapsto e_{j+k'-k}$. With this definition, 
$i(X_\lambda) = X_\lambda$, and it follows that 
\begin{equation*}
%\label{eqn:stability}
i_* \cMa(X_\lambda) = \cMa(X_\lambda) \in A_*(\Gr(k',n')) \/.
\end{equation*}
For instance, the Schubert variety $X_\Box \subset \Gr(1,3)$ is $\left\{\langle ae_1 + be_2 \rangle \subset \C^3\mid[a:b] \in \bP^1\right\}$. 
%For instance, the points on the Schubert variety $X_\Box \subset \Gr(1,3)$ are given by $\langle ae_1 + be_2 \rangle \subset \C^3$, where $[a:b] \in \bP^1$.
Under the inclusion $\Gr(1,3) \hookrightarrow \Gr(3,7)$ the image of $X_\Box$ is the Schubert variety parametrizing the dimension $3$ subspaces $\langle e_1, e_2, ae_3+be_4 \rangle \in \Gr(3,7)$.  

%Suppose $G/P$ is a cominuscule space, and $G'\subset G$ a subgroup such that $P'=P\cap G'$ is a proper subgroup of $G'$.
%We have
%\begin{equation}
%\label{eqn:stability}
%i_* \cMa(X_\lambda) = \cMa(i(X_\lambda)) \in A_*(G/P),
%\end{equation}
%where $i:G/P_\alpha\hookrightarrow G'/P'_\alpha$ is the map induced by the inclusion $G\subset G'$.
%In other words, the Mather classes are stable with respect to the inclusion of a cominuscule space into a larger one.
%Further, in the cases where either $i: \Gr(k,k+d) \hookrightarrow \Gr(k',k'+d')$, with $k\leq k'$ and $d\leq d'$, or 

One may define similar embeddings, $i: \LG(n,2n) \hookrightarrow \LG(n',2n')$, and 
$i: \OG(n,2n) \hookrightarrow \OG(n',2n')$ with $n\leq n'$.
In all these cases, we have have $i(X_\lambda)=X_\lambda$, (cf.~\cite[\S6.2,\S7.2]{smt},
%\cite[\S6.1,\S7.1]{smt},
%our convention of indexing Schubert varieties by integer sequences ensures that $i(X_\lambda)=X_\lambda$, allowing us to write \cref{eqn:stability} as simply
and hence $i_* \cMa(X_\lambda) = \cMa(X_\lambda)$.
We leave it to the reader to check the 
details in other types.

\bibliography{conormal}
\bibliographystyle{halpha}

\newpage
\begin{table}
\centering
\captionsetup[subfloat]{labelformat=empty}

\subfloat[][]{
\noindent\makebox[\textwidth]{%This line of code copied from https://tex.stackexchange.com/questions/4926/table-will-not-center-and-it-is-spilling-off-the-right-side-of-the-page. See the third and fifth answer in particular.
\begin{tabular}{|c|c|c|c|c|c|c|c|c|c|c|c|c|c|c|c|c|c|}
\hline
     & () & 1 & 2 & 21 & 3 & 4  & 31 & 41 & 32 & 42 & 321 & 43  & 421 & 431 & 432 & 4321 \\ \hline
()   & 1  & 2 & 2 & 4  & 4 & 4  & 8  & 8  & 4  & 8  & 8   & 12  & 16  & 24  & 8   & 16   \\
1    & 0  & 1 & 4 & 8  & 9 & 16 & 20 & 34 & 18 & 40 & 36  & 64  & 80  & 132 & 64  & 128  \\
2    & 0  & 0 & 1 & 3  & 5 & 14 & 14 & 37 & 23 & 64 & 46  & 114 & 128 & 246 & 172 & 344  \\
21   & 0  & 0 & 0 & 1  & 0 & 0  & 5  & 14 & 18 & 58 & 37  & 114 & 120 & 269 & 268 & 536  \\
3    & 0  & 0 & 0 & 0  & 1 & 6  & 3  & 17 & 7  & 36 & 15  & 80  & 76  & 183 & 176 & 352  \\
4    & 0  & 0 & 0 & 0  & 0 & 1  & 0  & 3  & 0  & 7  & 0   & 19  & 15  & 45  & 52  & 105  \\
31   & 0  & 0 & 0 & 0  & 0 & 0  & 1  & 6  & 6  & 34 & 15  & 90  & 82  & 241 & 336 & 674  \\
41   & 0  & 0 & 0 & 0  & 0 & 0  & 0  & 1  & 0  & 6  & 0   & 21  & 15  & 60  & 102 & 210  \\
32   & 0  & 0 & 0 & 0  & 0 & 0  & 0  & 0  & 1  & 6  & 4   & 25  & 23  & 92  & 190 & 386  \\
42   & 0  & 0 & 0 & 0  & 0 & 0  & 0  & 0  & 0  & 1  & 0   & 7   & 4   & 27  & 68  & 147  \\
321  & 0  & 0 & 0 & 0  & 0 & 0  & 0  & 0  & 0  & 0  & 1   & 0   & 6   & 25  & 88  & 184  \\
43   & 0  & 0 & 0 & 0  & 0 & 0  & 0  & 0  & 0  & 0  & 0   & 1   & 0   & 4   & 14  & 34   \\
421  & 0  & 0 & 0 & 0  & 0 & 0  & 0  & 0  & 0  & 0  & 0   & 0   & 1   & 7   & 32  & 76   \\
431  & 0  & 0 & 0 & 0  & 0 & 0  & 0  & 0  & 0  & 0  & 0   & 0   & 0   & 1   & 8   & 24   \\
432  & 0  & 0 & 0 & 0  & 0 & 0  & 0  & 0  & 0  & 0  & 0   & 0   & 0   & 0   & 1   & 5    \\
4321 & 0  & 0 & 0 & 0  & 0 & 0  & 0  & 0  & 0  & 0  & 0   & 0   & 0   & 0   & 0   & 1    \\
\hline
\end{tabular}
}
}
\caption{ Chern-Mather classes for $\LG(4,8)$.  }
\label{TBL:cMaLG48}
\end{table}

\begin{table}
\centering
\captionsetup[subfloat]{labelformat=empty}
%\subfloat[][]{
%\begin{tabular}{|c|c|c|c|c|}
%\hline
%   & () & 1 & 2 & 21\\ \hline
%() & 1  & 1 & 0 & 1\\
%1  & 0  & 1 & 1 & 1\\
%2  & 0  & 0 & 1 & 1\\
%21 & 0  & 0 & 0 & 1\\
%\hline
%\end{tabular}
%}

%\subfloat[][]{
%\begin{tabular}{|c|c|c|c|c|c|c|c|c|}
%\hline
%    & () & 1 & 2 & 21 & 3 & 31 & 32 & 321\\ \hline
%()  & 1  & 1 & 0 & 1  & 1 & 2  & 1  & 1\\
%1   & 0  & 1 & 1 & 1  & 1 & 2  & 0  & 1\\
%2   & 0  & 0 & 1 & 1  & 1 & 1  & 0  & 1\\
%21  & 0  & 0 & 0 & 1  & 0 & 1  & 1  & 1\\
%3   & 0  & 0 & 0 & 0  & 1 & 1  & 0  & 1\\
%31  & 0  & 0 & 0 & 0  & 0 & 1  & 1  & 1\\
%32  & 0  & 0 & 0 & 0  & 0 & 0  & 1  & 1\\
%321 & 0  & 0 & 0 & 0  & 0 & 0  & 0  & 1\\
%\hline
%\end{tabular}
%}
\subfloat[][]{
\noindent\makebox[\textwidth]{%This line of code copied from https://tex.stackexchange.com/questions/4926/table-will-not-center-and-it-is-spilling-off-the-right-side-of-the-page. See the third and fifth answer in particular.
\begin{tabular}{|c|c|c|c|c|c|c|c|c|c|c|c|c|c|c|c|c|}
\hline
     & () & 1 & 2 & 21 & 3 & 31 & 32 & 321 & 4 & 41 & 42 & 421 & 43 & 431 & 432 & 4321\\ \hline
()   & 1  & 1 & 0 & 1  & 1 & 2  & 1  & 1   & 0 & 1  & 2  & 2   & 2  & 3   & 0   & 1\\
1    & 0  & 1 & 1 & 1  & 1 & 2  & 0  & 1   & 1 & 1  & 0  & 2   & 1  & 3   & 1   & 1\\
2    & 0  & 0 & 1 & 1  & 1 & 1  & 0  & 1   & 1 & 1  & 0  & 2   & 1  & 2   & 1   & 1\\
21   & 0  & 0 & 0 & 1  & 0 & 1  & 1  & 1   & 0 & 1  & 2  & 2   & 1  & 2   & 0   & 1\\
3    & 0  & 0 & 0 & 0  & 1 & 1  & 0  & 1   & 1 & 1  & 0  & 1   & 1  & 2   & 1   & 1\\
31   & 0  & 0 & 0 & 0  & 0 & 1  & 1  & 1   & 0 & 1  & 1  & 1   & 1  & 2   & 0   & 1\\
32   & 0  & 0 & 0 & 0  & 0 & 0  & 1  & 1   & 0 & 0  & 1  & 1   & 1  & 1   & 0   & 1\\
321  & 0  & 0 & 0 & 0  & 0 & 0  & 0  & 1   & 0 & 0  & 0  & 1   & 0  & 1   & 1   & 1\\
4    & 0  & 0 & 0 & 0  & 0 & 0  & 0  & 0   & 1 & 1  & 0  & 1   & 1  & 2   & 1   & 1\\
41   & 0  & 0 & 0 & 0  & 0 & 0  & 0  & 0   & 0 & 1  & 1  & 1   & 1  & 2   & 0   & 1\\
42   & 0  & 0 & 0 & 0  & 0 & 0  & 0  & 0   & 0 & 0  & 1  & 1   & 1  & 1   & 0   & 1\\
421  & 0  & 0 & 0 & 0  & 0 & 0  & 0  & 0   & 0 & 0  & 0  & 1   & 0  & 1   & 1   & 1\\
43   & 0  & 0 & 0 & 0  & 0 & 0  & 0  & 0   & 0 & 0  & 0  & 0   & 1  & 1   & 0   & 1\\
431  & 0  & 0 & 0 & 0  & 0 & 0  & 0  & 0   & 0 & 0  & 0  & 0   & 0  & 1   & 1   & 1\\
432  & 0  & 0 & 0 & 0  & 0 & 0  & 0  & 0   & 0 & 0  & 0  & 0   & 0  & 0   & 1   & 1\\
4321 & 0  & 0 & 0 & 0  & 0 & 0  & 0  & 0   & 0 & 0  & 0  & 0   & 0  & 0   & 0   & 1\\
\hline
\end{tabular}
}
}
\caption{
Local Euler obstructions for $\LG(4,8)$.
The Euler obstruction of the Schubert variety $X_v$ at the point $u$ is given in row $u$ and column $v$.
}
\label{TBL:eulerLG4}
\end{table}

\begin{table}
\centering
\captionsetup[subfloat]{labelformat=empty}
\subfloat[][]{
\noindent\makebox[\textwidth]{%This line of code copied from https://tex.stackexchange.com/questions/4926/table-will-not-center-and-it-is-spilling-off-the-right-side-of-the-page. See the third and fifth answer in particular.
\begin{tabular}{|c|c|c|c|c|c|c|c|c|c|c|c|c|c|c|c|c|c|c|c|c|}
\hline
&()&1&2&11&3&21&111&31&22&211&32&311&221&33&321&222&331&322&332&333\\\hline
()&1&2&3&3&4&6&4&8&6&8&12&12&12&10&24&10&20&20&30&20\\
1&0&1&3&3&6&8&6&15&12&15&27&27&27&30&60&30&66&66&108&90\\
2&0&0&1&0&4&3&0&11&7&6&23&21&17&35&54&25&82&74&144&150\\
11&0&0&0&1&0&3&4&6&7&11&17&21&23&25&54&35&74&82&144&150\\
3&0&0&0&0&1&0&0&3&0&0&7&6&0&15&17&0&37&25&69&90\\
21&0&0&0&0&0&1&0&4&4&4&15&15&15&30&52&30&98&98&210&270\\
111&0&0&0&0&0&0&1&0&0&3&0&6&7&0&17&15&25&37&69&90\\
31&0&0&0&0&0&0&0&1&0&0&4&4&0&11&15&0&40&30&93&146\\
22&0&0&0&0&0&0&0&0&1&0&4&0&4&12&15&12&42&42&108&174\\
211&0&0&0&0&0&0&0&0&0&1&0&4&4&0&15&11&30&40&93&146\\
32&0&0&0&0&0&0&0&0&0&0&1&0&0&5&4&0&19&12&54&108\\
311&0&0&0&0&0&0&0&0&0&0&0&1&0&0&4&0&11&11&36&66\\
221&0&0&0&0&0&0&0&0&0&0&0&0&1&0&4&5&12&19&54&108\\
33&0&0&0&0&0&0&0&0&0&0&0&0&0&1&0&0&4&0&12&32\\
321&0&0&0&0&0&0&0&0&0&0&0&0&0&0&1&0&5&5&24&58\\
222&0&0&0&0&0&0&0&0&0&0&0&0&0&0&0&1&0&4&12&32\\
331&0&0&0&0&0&0&0&0&0&0&0&0&0&0&0&0&1&0&5&17\\
322&0&0&0&0&0&0&0&0&0&0&0&0&0&0&0&0&0&1&5&17\\
332&0&0&0&0&0&0&0&0&0&0&0&0&0&0&0&0&0&0&1&6\\
333&0&0&0&0&0&0&0&0&0&0&0&0&0&0&0&0&0&0&0&1\\
\hline
\end{tabular}
}
}
\caption{ Chern-Mather classes for $\Gr(3,6)$.  }
\label{TBL:Gr36}
\end{table}

\begin{table}
\centering
\captionsetup[subfloat]{labelformat=empty}
\subfloat[][]{
\noindent\makebox[\textwidth]{%This line of code copied from https://tex.stackexchange.com/questions/4926/table-will-not-center-and-it-is-spilling-off-the-right-side-of-the-page. See the third and fifth answer in particular.
\begin{tabular}{|c|c|c|c|c|c|c|c|c|c|c|c|c|c|c|c|}
\hline
&1111&2111&3111&2211&3211&2221&2222&3311&3221&3222&3321&3322&3331&3332&3333\\\hline
()&5&10&15&15&30&20&15&30&40&30&60&45&40&60&35\\
1&10&24&42&42&92&64&60&108&140&130&228&210&190&300&210\\
2&0&10&34&29&89&56&65&141&163&180&315&341&330&544&455\\
11&10&26&48&51&117&88&105&153&203&240&351&411&360&624&525\\
3&0&0&10&0&29&0&0&66&56&65&155&185&205&351&350\\
21&0&10&36&36&119&82&120&216&258&360&541&738&690&1266&1260\\
111&5&14&27&30&71&58&90&99&139&215&251&387&305&621&630\\
31&0&0&10&0&36&0&0&94&82&120&252&375&397&768&896\\
22&0&0&0&10&36&36&67&97&121&214&300&502&483&968&1141\\
211&0&5&19&19&67&49&91&128&166&297&369&652&547&1193&1407\\
32&0&0&0&0&10&0&0&46&36&67&157&281&318&678&938\\
311&0&0&5&0&19&0&0&51&49&91&157&298&278&645&868\\
221&0&0&0&5&19&24&58&54&86&199&229&505&434&1056&1470\\
33&0&0&0&0&0&0&0&10&0&0&36&67&100&228&376\\
321&0&0&0&0&5&0&0&24&24&58&110&257&257&672&1076\\
222&0&0&0&0&0&5&18&0&19&65&54&174&136&408&680\\
331&0&0&0&0&0&0&0&5&0&0&24&58&80&224&427\\
322&0&0&0&0&0&0&0&0&5&18&24&83&78&257&497\\
332&0&0&0&0&0&0&0&0&0&0&5&18&29&101&238\\
333&0&0&0&0&0&0&0&0&0&0&0&0&5&18&56\\
1111&1&3&6&7&17&15&31&25&37&77&69&145&90&245&301\\
2111&0&1&4&4&15&11&26&30&40&93&93&218&146&423&588\\
3111&0&0&1&0&4&0&0&11&11&26&36&88&66&198&302\\
2211&0&0&0&1&4&5&16&12&19&59&54&163&108&368&604\\
3211&0&0&0&0&1&0&0&5&5&16&24&75&58&207&378\\
2221&0&0&0&0&0&1&6&0&4&23&12&66&32&168&336\\
2222&0&0&0&0&0&0&1&0&0&4&0&12&0&32&80\\
3311&0&0&0&0&0&0&0&1&0&0&5&16&17&65&141\\
3221&0&0&0&0&0&0&0&0&1&6&5&29&17&95&215\\
3222&0&0&0&0&0&0&0&0&0&1&0&5&0&17&49\\
3321&0&0&0&0&0&0&0&0&0&0&1&6&6&35&98\\
3322&0&0&0&0&0&0&0&0&0&0&0&1&0&6&23\\
3331&0&0&0&0&0&0&0&0&0&0&0&0&1&6&24\\
3332&0&0&0&0&0&0&0&0&0&0&0&0&0&1&7\\
3333&0&0&0&0&0&0&0&0&0&0&0&0&0&0&1\\
\hline
\end{tabular}
}
}
\caption{ Chern-Mather classes for $\Gr(3,7)$.  }
\label{TBL:Gr37}
\end{table}

\begin{table}
\centering
\captionsetup[subfloat]{labelformat=empty}
\subfloat[][]{
\noindent\makebox[\textwidth]{%
\begin{tabular}{|c|c|c|c|c|c|c|c|c|c|c|c|c|c|c|c|c|c|c|c|c|}
\hline
     & 4111 & 4211 & 4311 & 4221 & 4222 & 4411 & 4321 & 4322 \\ \hline
()   & 20   & 40   & 60   & 60   & 45   & 45   & 120  & 90   \\
1    & 64   & 140  & 228  & 228  & 210  & 210  & 480  & 440  \\
2    & 76   & 188  & 345  & 333  & 355  & 405  & 762  & 808  \\
11   & 76   & 188  & 333  & 345  & 405  & 355  & 762  & 888  \\
3    & 44   & 118  & 243  & 219  & 245  & 375  & 556  & 636  \\
21   & 84   & 264  & 555  & 555  & 750  & 750  & 1370 & 1830 \\
111  & 44   & 118  & 219  & 243  & 375  & 245  & 556  & 856  \\
31   & 46   & 155  & 374  & 340  & 480  & 653  & 969  & 1383 \\
22   & 0    & 84   & 272  & 272  & 466  & 466  & 814  & 1324 \\
211  & 46   & 155  & 340  & 374  & 653  & 480  & 969  & 1685 \\
32   & 0    & 46   & 201  & 157  & 281  & 453  & 644  & 1102 \\
311  & 24   & 86   & 215  & 215  & 388  & 388  & 644  & 1182 \\
221  & 0    & 46   & 157  & 201  & 453  & 281  & 644  & 1386 \\
33   & 0    & 0    & 46   & 0    & 0    & 153  & 157  & 281  \\
321  & 0    & 24   & 110  & 110  & 257  & 257  & 478  & 1074 \\
222  & 0    & 0    & 0    & 46   & 153  & 0    & 157  & 493  \\
331  & 0    & 0    & 24   & 0    & 0    & 83   & 110  & 257  \\
322  & 0    & 0    & 0    & 24   & 83   & 0    & 110  & 363  \\
332  & 0    & 0    & 0    & 0    & 0    & 0    & 24   & 83   \\
%333  & 0    & 0    & 0    & 0    & 0    & 0    & 0    & 0    \\
4    & 10   & 29   & 66   & 56   & 65   & 138  & 155  & 185  \\
41   & 10   & 36   & 94   & 82   & 120  & 216  & 252  & 375  \\
42   & 0    & 10   & 46   & 36   & 67   & 140  & 157  & 281  \\
411  & 5    & 19   & 51   & 49   & 91   & 119  & 157  & 298  \\
43   & 0    & 0    & 10   & 0    & 0    & 56   & 36   & 67   \\
421  & 0    & 5    & 24   & 24   & 58   & 75   & 110  & 257  \\
44   & 0    & 0    & 0    & 0    & 0    & 10   & 0    & 0    \\
431  & 0    & 0    & 5    & 0    & 0    & 29   & 24   & 58   \\
422  & 0    & 0    & 0    & 5    & 18   & 0    & 24   & 83   \\
441  & 0    & 0    & 0    & 0    & 0    & 5    & 0    & 0    \\
432  & 0    & 0    & 0    & 0    & 0    & 0    & 5    & 18   \\
%442  & 0    & 0    & 0    & 0    & 0    & 0    & 0    & 0    \\
%433  & 0    & 0    & 0    & 0    & 0    & 0    & 0    & 0    \\
%443  & 0    & 0    & 0    & 0    & 0    & 0    & 0    & 0    \\
%444  & 0    & 0    & 0    & 0    & 0    & 0    & 0    & 0    \\
1111 & 10   & 29   & 56   & 66   & 138  & 65   & 155  & 327  \\
2111 & 10   & 36   & 82   & 94   & 216  & 120  & 252  & 587  \\
3111 & 5    & 19   & 49   & 51   & 119  & 91   & 157  & 378  \\
2211 & 0    & 10   & 36   & 46   & 140  & 67   & 157  & 465  \\
3211 & 0    & 5    & 24   & 24   & 75   & 58   & 110  & 335  \\
2221 & 0    & 0    & 0    & 10   & 56   & 0    & 36   & 193  \\
2222 & 0    & 0    & 0    & 0    & 10   & 0    & 0    & 36   \\
3311 & 0    & 0    & 5    & 0    & 0    & 18   & 24   & 75   \\
3221 & 0    & 0    & 0    & 5    & 29   & 0    & 24   & 134  \\
3222 & 0    & 0    & 0    & 0    & 5    & 0    & 0    & 24   \\
3321 & 0    & 0    & 0    & 0    & 0    & 0    & 5    & 29   \\
3322 & 0    & 0    & 0    & 0    & 0    & 0    & 0    & 5    \\
%3331 & 0    & 0    & 0    & 0    & 0    & 0    & 0    & 0    \\
%3332 & 0    & 0    & 0    & 0    & 0    & 0    & 0    & 0    \\
%3333 & 0    & 0    & 0    & 0    & 0    & 0    & 0    & 0    \\
4111 & 1    & 4    & 11   & 11   & 26   & 26   & 36   & 88   \\
4211 & 0    & 1    & 5    & 5    & 16   & 16   & 24   & 75   \\
4311 & 0    & 0    & 1    & 0    & 0    & 6    & 5    & 16   \\
4221 & 0    & 0    & 0    & 1    & 6    & 0    & 5    & 29   \\
4222 & 0    & 0    & 0    & 0    & 1    & 0    & 0    & 5    \\
4411 & 0    & 0    & 0    & 0    & 0    & 1    & 0    & 0    \\
4321 & 0    & 0    & 0    & 0    & 0    & 0    & 1    & 6    \\
4322 & 0    & 0    & 0    & 0    & 0    & 0    & 0    & 1    \\
%4421 & 0    & 0    & 0    & 0    & 0    & 0    & 0    & 0    \\
%4331 & 0    & 0    & 0    & 0    & 0    & 0    & 0    & 0    \\
%4422 & 0    & 0    & 0    & 0    & 0    & 0    & 0    & 0    \\
%4332 & 0    & 0    & 0    & 0    & 0    & 0    & 0    & 0    \\
%4431 & 0    & 0    & 0    & 0    & 0    & 0    & 0    & 0    \\
%4333 & 0    & 0    & 0    & 0    & 0    & 0    & 0    & 0    \\
%4432 & 0    & 0    & 0    & 0    & 0    & 0    & 0    & 0    \\
%4441 & 0    & 0    & 0    & 0    & 0    & 0    & 0    & 0    \\
%4433 & 0    & 0    & 0    & 0    & 0    & 0    & 0    & 0    \\
%4442 & 0    & 0    & 0    & 0    & 0    & 0    & 0    & 0    \\
%4443 & 0    & 0    & 0    & 0    & 0    & 0    & 0    & 0    \\
%4444 & 0    & 0    & 0    & 0    & 0    & 0    & 0    & 0    \\
\hline
\end{tabular}
}
}
\caption{ Chern-Mather classes for $\Gr(4,8)$.  }
\end{table}

\begin{table}
\centering
\captionsetup[subfloat]{labelformat=empty}
\subfloat[][]{
\noindent\makebox[\textwidth]{%
\begin{tabular}{|c|c|c|c|c|c|c|c|c|c|c|c|c|c|c|c|c|c|c|c|c|}
\hline
     & 4421 & 4331 & 4422 & 4332 & 4431 & 4333 & 4432 & 4441 & 4433  & 4442  & 4443  & 4444  \\ \hline
()   & 90   & 80   & 90   & 120  & 120  & 70   & 180  & 70   & 105   & 105   & 140   & 70    \\
1    & 440  & 400  & 480  & 630  & 630  & 440  & 990  & 440  & 690   & 690   & 960   & 560   \\
2    & 888  & 786  & 1032 & 1286 & 1356 & 1060 & 2208 & 1140 & 1805  & 1855  & 2680  & 1820  \\
11   & 808  & 786  & 1032 & 1356 & 1286 & 1140 & 2208 & 1060 & 1855  & 1805  & 2680  & 1820  \\
3    & 856  & 684  & 1044 & 1152 & 1386 & 1090 & 2322 & 1410 & 2170  & 2370  & 3544  & 2800  \\
21   & 1830 & 1734 & 2520 & 3144 & 3144 & 3090 & 5652 & 3090 & 5520  & 5520  & 8568  & 6720  \\
111  & 636  & 684  & 1044 & 1386 & 1152 & 1410 & 2322 & 1090 & 2370  & 2170  & 3544  & 2800  \\
31   & 1685 & 1439 & 2457 & 2715 & 3097 & 3022 & 5787 & 3624 & 6383  & 6766  & 10942 & 9863  \\
22   & 1324 & 1310 & 2066 & 2578 & 2578 & 3002 & 4996 & 3002 & 5763  & 5763  & 9532  & 8582  \\
211  & 1383 & 1439 & 2457 & 3097 & 2715 & 3624 & 5787 & 3022 & 6766  & 6383  & 10942 & 9863  \\
32   & 1386 & 1234 & 2292 & 2547 & 2943 & 3366 & 5964 & 4070 & 7744  & 8220  & 14264 & 14672 \\
311  & 1182 & 1094 & 2208 & 2466 & 2466 & 3198 & 5502 & 3198 & 7130  & 7130  & 12760 & 12992 \\
221  & 1102 & 1234 & 2292 & 2943 & 2547 & 4070 & 5964 & 3366 & 8220  & 7744  & 14264 & 14672 \\
33   & 493  & 410  & 850  & 903  & 1194 & 1422 & 2551 & 1990 & 3825  & 4263  & 7868  & 9246  \\
321  & 1074 & 1074 & 2344 & 2706 & 2706 & 4176 & 6672 & 4176 & 10200 & 10200 & 19768 & 22900 \\
222  & 281  & 410  & 850  & 1194 & 903  & 1990 & 2551 & 1422 & 4263  & 3825  & 7868  & 9246  \\
331  & 363  & 349  & 820  & 943  & 1090 & 1730 & 2841 & 2038 & 5003  & 5281  & 10942 & 14408 \\
322  & 257  & 349  & 820  & 1090 & 943  & 2038 & 2841 & 1730 & 5281  & 5003  & 10942 & 14408 \\
332  & 83   & 134  & 275  & 446  & 446  & 1014 & 1412 & 1014 & 3084  & 3084  & 7304  & 10946 \\
333  & 0    & 24   & 0    & 83   & 83   & 248  & 275  & 248  & 784   & 784   & 2112  & 3656  \\
4    & 327  & 205  & 415  & 351  & 559  & 350  & 959  & 692  & 985   & 1198  & 1850  & 1701  \\
41   & 587  & 397  & 900  & 768  & 1146 & 896  & 2217 & 1592 & 2658  & 3102  & 5216  & 5376  \\
42   & 465  & 318  & 814  & 678  & 1069 & 938  & 2257 & 1754 & 3163  & 3721  & 6764  & 7926  \\
411  & 378  & 278  & 737  & 645  & 825  & 868  & 1918 & 1232 & 2659  & 2895  & 5388  & 6126  \\
43   & 193  & 100  & 348  & 228  & 510  & 376  & 1131 & 1028 & 1798  & 2310  & 4482  & 6016  \\
421  & 335  & 257  & 768  & 672  & 897  & 1076 & 2322 & 1600 & 3782  & 4152  & 8442  & 10928 \\
44   & 36   & 0    & 67   & 0    & 100  & 0    & 228  & 256  & 376   & 604   & 1236  & 1909  \\
431  & 134  & 80   & 315  & 224  & 429  & 427  & 1167 & 951  & 2157  & 2608  & 5690  & 8421  \\
422  & 75   & 78   & 254  & 257  & 291  & 497  & 939  & 613  & 1855  & 1924  & 4466  & 6551  \\
441  & 24   & 0    & 58   & 0    & 80   & 0    & 224  & 228  & 427   & 656   & 1508  & 2552  \\
432  & 29   & 29   & 101  & 101  & 163  & 238  & 547  & 444  & 1252  & 1450  & 3656  & 6160  \\
442  & 5    & 0    & 18   & 0    & 29   & 0    & 101  & 109  & 238   & 373   & 993   & 1924  \\
433  & 0    & 5    & 0    & 18   & 29   & 56   & 101  & 112  & 304   & 376   & 1088  & 2144  \\
443  & 0    & 0    & 0    & 0    & 5    & 0    & 18   & 34   & 56    & 119   & 360   & 832   \\
444  & 0    & 0    & 0    & 0    & 0    & 0    & 0    & 5    & 0     & 18    & 56    & 160   \\
\hline
\end{tabular}
}
}
\caption{(cont.) Chern-Mather classes for $\Gr(4,8)$.  }
\end{table}

\begin{table}
\centering
\captionsetup[subfloat]{labelformat=empty}
\subfloat[][]{
\noindent\makebox[\textwidth]{%
\begin{tabular}{|c|c|c|c|c|c|c|c|c|c|c|c|c|c|c|c|c|c|c|c|c|}
\hline
     & 4421 & 4331 & 4422 & 4332 & 4431 & 4333 & 4432 & 4441 & 4433  & 4442  & 4443  & 4444  \\ \hline
1111 & 185  & 205  & 415  & 559  & 351  & 692  & 959  & 350  & 1198  & 985   & 1850  & 1701  \\
2111 & 375  & 397  & 900  & 1146 & 768  & 1592 & 2217 & 896  & 3102  & 2658  & 5216  & 5376  \\
3111 & 298  & 278  & 737  & 825  & 645  & 1232 & 1918 & 868  & 2895  & 2659  & 5388  & 6126  \\
2211 & 281  & 318  & 814  & 1069 & 678  & 1754 & 2257 & 938  & 3721  & 3163  & 6764  & 7926  \\
3211 & 257  & 257  & 768  & 897  & 672  & 1600 & 2322 & 1076 & 4152  & 3782  & 8442  & 10928 \\
2221 & 67   & 100  & 348  & 510  & 228  & 1028 & 1131 & 376  & 2310  & 1798  & 4482  & 6016  \\
2222 & 0    & 0    & 67   & 100  & 0    & 256  & 228  & 0    & 604   & 376   & 1236  & 1909  \\
3311 & 83   & 78   & 254  & 291  & 257  & 613  & 939  & 497  & 1924  & 1855  & 4466  & 6551  \\
3221 & 58   & 80   & 315  & 429  & 224  & 951  & 1167 & 427  & 2608  & 2157  & 5690  & 8421  \\
3222 & 0    & 0    & 58   & 80   & 0    & 228  & 224  & 0    & 656   & 427   & 1508  & 2552  \\
3321 & 18   & 29   & 101  & 163  & 101  & 444  & 547  & 238  & 1450  & 1252  & 3656  & 6160  \\
3322 & 0    & 0    & 18   & 29   & 0    & 109  & 101  & 0    & 373   & 238   & 993   & 1924  \\
3331 & 0    & 5    & 0    & 29   & 18   & 112  & 101  & 56   & 376   & 304   & 1088  & 2144  \\
3332 & 0    & 0    & 0    & 5    & 0    & 34   & 18   & 0    & 119   & 56    & 360   & 832   \\
3333 & 0    & 0    & 0    & 0    & 0    & 5    & 0    & 0    & 18    & 0     & 56    & 160   \\
4111 & 88   & 66   & 222  & 198  & 198  & 302  & 600  & 302  & 946   & 946   & 1962  & 2416  \\
4211 & 75   & 58   & 231  & 207  & 207  & 378  & 736  & 378  & 1374  & 1374  & 3168  & 4458  \\
4311 & 29   & 17   & 91   & 65   & 95   & 141  & 356  & 215  & 754   & 830   & 2074  & 3336  \\
4221 & 16   & 17   & 91   & 95   & 65   & 215  & 356  & 141  & 830   & 754   & 2074  & 3336  \\
4222 & 0    & 0    & 16   & 17   & 0    & 49   & 65   & 0    & 197   & 141   & 517   & 944   \\
4411 & 5    & 0    & 16   & 0    & 17   & 0    & 65   & 49   & 141   & 197   & 517   & 944   \\
4321 & 6    & 6    & 35   & 35   & 35   & 98   & 198  & 98   & 542   & 542   & 1656  & 3070  \\
4322 & 0    & 0    & 6    & 6    & 0    & 23   & 35   & 0    & 132   & 98    & 425   & 904   \\
4421 & 1    & 0    & 6    & 0    & 6    & 0    & 35   & 23   & 98    & 132   & 425   & 904   \\
4331 & 0    & 1    & 0    & 6    & 6    & 24   & 35   & 24   & 136   & 136   & 512   & 1128  \\
4422 & 0    & 0    & 1    & 0    & 0    & 0    & 6    & 0    & 23    & 23    & 106   & 262   \\
4332 & 0    & 0    & 0    & 1    & 0    & 7    & 6    & 0    & 41    & 24    & 160   & 416   \\
4431 & 0    & 0    & 0    & 0    & 1    & 0    & 6    & 7    & 24    & 41    & 160   & 416   \\
4333 & 0    & 0    & 0    & 0    & 0    & 1    & 0    & 0    & 6     & 0     & 24    & 80    \\
4432 & 0    & 0    & 0    & 0    & 0    & 0    & 1    & 0    & 7     & 7     & 48    & 152   \\
4441 & 0    & 0    & 0    & 0    & 0    & 0    & 0    & 1    & 0     & 6     & 24    & 80    \\
4433 & 0    & 0    & 0    & 0    & 0    & 0    & 0    & 0    & 1     & 0     & 7     & 31    \\
4442 & 0    & 0    & 0    & 0    & 0    & 0    & 0    & 0    & 0     & 1     & 7     & 31    \\
4443 & 0    & 0    & 0    & 0    & 0    & 0    & 0    & 0    & 0     & 0     & 1     & 8     \\
4444 & 0    & 0    & 0    & 0    & 0    & 0    & 0    & 0    & 0     & 0     & 0     & 1     \\
\hline
\end{tabular}
}
}
\caption{(cont.) Chern-Mather classes for $\Gr(4,8)$.  }
\end{table}

%\begin{tabular}{|c|c|}
%\hline
%Buch's Code & BCMP diagram\\ \hline
% 4          & 31\\
% 31         & 4\\
% 311        & 5\\
% 411        & 51\\
% 421        & 52\\
% 43         & 421\\
% 422        & 53\\
% 431        & 521 \\
% 44         & 4211\\
% 432        & 531\\
% 441        & 5211\\
% 433        & 532\\
% 442        & 5311\\
% 4331       & 533\\
% 443        & 5321\\
% 444        & 5322\\
%\hline
%\end{tabular}
%
\begin{table}
\centering
\captionsetup[subfloat]{labelformat=empty}
\subfloat[][]{
\begin{tabular}{|c|c|c|c|c|c|c|c|c|c|c|c|c|c|}
\hline
     & () & 1 & 2 & 3 & 31 & 4  & 5  & 41 & 51 & 42 & 52  & 421 & 53 \\ \hline
()   & 1  & 2 & 3 & 4 & 5  & 5  & 6  & 10 & 12 & 10 & 18  & 10  & 14\\
1    & 0  & 1 & 3 & 6 & 10 & 10 & 15 & 22 & 33 & 26 & 54  & 32  & 52\\
2    & 0  & 0 & 1 & 4 & 10 & 10 & 20 & 28 & 55 & 44 & 106 & 68  & 130\\
3    & 0  & 0 & 0 & 1 & 5  & 5  & 15 & 22 & 60 & 48 & 144 & 92  & 225\\
31   & 0  & 0 & 0 & 0 & 1  & 0  & 0  & 5  & 15 & 16 & 55  & 40  & 108\\
4    & 0  & 0 & 0 & 0 & 0  & 1  & 6  & 5  & 27 & 16 & 79  & 40  & 159\\
5    & 0  & 0 & 0 & 0 & 0  & 0  & 1  & 0  & 5  & 0  & 16  & 0   & 42\\
41   & 0  & 0 & 0 & 0 & 0  & 0  & 0  & 1  & 6  & 6  & 33  & 22  & 86\\
51   & 0  & 0 & 0 & 0 & 0  & 0  & 0  & 0  & 1  & 0  & 6   & 0   & 22\\
42   & 0  & 0 & 0 & 0 & 0  & 0  & 0  & 0  & 0  & 1  & 6   & 7   & 23\\
52   & 0  & 0 & 0 & 0 & 0  & 0  & 0  & 0  & 0  & 0  & 1   & 0   & 7\\
421  & 0  & 0 & 0 & 0 & 0  & 0  & 0  & 0  & 0  & 0  & 0   & 1   & 0\\
53   & 0  & 0 & 0 & 0 & 0  & 0  & 0  & 0  & 0  & 0  & 0   & 0   & 1\\
\hline
\end{tabular}
}

\subfloat[][]{
\begin{tabular}{|c|c|c|c|c|c|c|c|c|c|}
\hline
      & 521 & 4211 & 531 & 5211 & 532 & 5311 & 533  & 5321 & 5322\\ \hline
()    & 19  & 10   & 28  & 20   & 23  & 30   & 18   & 40   & 27\\
1     & 68  & 40   & 108 & 85   & 100 & 135  & 96   & 190  & 150\\
2     & 164 & 100  & 288 & 240  & 310 & 421  & 360  & 644  & 600\\
3     & 274 & 160  & 552 & 470  & 705 & 948  & 960  & 1611 & 1770\\
31    & 138 & 86   & 318 & 292  & 488 & 672  & 768  & 1276 & 1659\\
4     & 184 & 86   & 450 & 372  & 691 & 912  & 1080 & 1760 & 2271\\
5     & 40  & 0    & 132 & 86   & 242 & 298  & 432  & 680  & 1020\\
41    & 112 & 62   & 322 & 296  & 604 & 840  & 1092 & 1840 & 2814\\
51    & 22  & 0    & 94  & 62   & 216 & 272  & 456  & 734  & 1308\\
42    & 39  & 29   & 139 & 151  & 326 & 507  & 690  & 1285 & 2334\\
52    & 7   & 0    & 46  & 29   & 138 & 180  & 354  & 601  & 1284\\
421   & 6   & 8    & 23  & 45   & 70  & 162  & 180  & 465  & 1014\\
53    & 0   & 0    & 7   & 0    & 30  & 29   & 98   & 147  & 384\\
521   & 1   & 0    & 7   & 8    & 30  & 53   & 100  & 215  & 564\\
4211  & 0   & 1    & 0   & 6    & 0   & 23   & 0    & 70   & 186\\
531   & 0   & 0    & 1   & 0    & 8   & 8    & 38   & 61   & 206\\
5211  & 0   & 0    & 0   & 1    & 0   & 7    & 0    & 30   & 100\\
532   & 0   & 0    & 0   & 0    & 1   & 0    & 9    & 8    & 39\\
5311  & 0   & 0    & 0   & 0    & 0   & 1    & 0    & 8    & 38\\
533   & 0   & 0    & 0   & 0    & 0   & 0    & 1    & 0    & 0\\
5321  & 0   & 0    & 0   & 0    & 0   & 0    & 0    & 1    & 9\\
5322  & 0   & 0    & 0   & 0    & 0   & 0    & 0    & 0    & 1\\
\hline
\end{tabular}
}
\caption{
Chern-Mather classes for all but $5$ Schubert varieties in the Cayley plane, the cominuscule space $E_6/P_6$.
}
\label{TBL:cMaE6}
\end{table}

\end{document}